\documentclass{amsart}

\usepackage{amssymb,amsmath}
\usepackage[section]{algorithm}
\usepackage{algpseudocode}

\newtheorem{theorem}{Theorem}[section]
\newtheorem{proposition}[theorem]{Proposition}

\newtheorem{corollary}[theorem]{Corollary}

\newtheorem{definition}[theorem]{Definition}

\newtheorem{remark}[theorem]{Remark}

\newtheorem{example}[theorem]{Example}


\def\N{{\mathbb N}}

\def\Q{{\mathbb Q}}


\def\lcm{{\mathrm{lcm}}}
\def\ord{{\mathrm{ord}}}

\def\w{{\mathrm{w}}}

\def\ord{{\mathrm{ord}}}
\def\topord{{\mathrm{topord}}}

\def\hSigma{{\hat{\Sigma}}}
\def\hN{{\hat{\N}}}

\def\bX{{\bar{X}}}

\def\bP{{\bar{P}}}

\def\bM{{\bar{M}}}

\def\End{{\mathrm{End}}}

\def\Mon{{\mathrm{Mon}}}

\def\Gr{Gr\"obner}

\def\lm{{\mathrm{lm}}}
\def\lc{{\mathrm{lc}}}
\def\lt{{\mathrm{lt}}}
\def\LM{{\mathrm{LM}}}
\def\spoly{{\mathrm{spoly}}}

\def\Reduce{{\textsc{Reduce}}}

\def\SigmaGBasis{{\textsc{SigmaGBasis}}}

\def\Inc{{\mathrm{Inc}}}

\hyphenation{}
\hyphenation{}

\begin{document}

\title[Gr\"obner bases and gradings for partial difference ideals]
{Gr\"obner bases and gradings for partial difference ideals}

\author[R. La Scala]{Roberto La Scala$^*$}

\address{$^*$ Dipartimento di Matematica, via Orabona 4, 70125 Bari, Italia}
\email{roberto.lascala@uniba.it}

\thanks{Partially supported by Universit\`a di Bari}

\subjclass[2000] {Primary 12H10. Secondary 13P10, 16W22, 16W50}

\keywords{Partial difference equations; \Gr\ bases; Actions on algebras;
Gradings on algebras}

\maketitle

\begin{abstract}
In this paper we introduce a working generalization of the theory of
\Gr\ bases for algebras of partial difference polynomials with constant
coefficients. One obtains symbolic (formal) computation for systems
of linear or non-linear partial difference equations arising, for instance,
as discrete models or by the discretization of systems of differential equations.
From an algebraic viewpoint, the algebras of partial difference polynomials
are free objects in the category of commutative algebras endowed with
the action by endomorphisms of a monoid isomorphic to $\N^r$. Then,
the investigation of \Gr\ bases in this context contributes also to
the current research trend consisting in studying polynomial rings
under the action of suitable symmetries that are compatible with effective
methods. Since the algebras of difference polynomials are not Noetherian ones,
we propose in this paper a theory for grading them that provides a Noetherian
subalgebras filtration. This implies that the variants of the Buchberger's
algorithm we developed for difference ideals terminate in the finitely generated
graded case when truncated up to some degree. Moreover, even in the non-graded
case, we provide criterions for certifying completeness of eventually
finite \Gr\ bases when they are computed within sufficiently large
bounded degrees. We generalize also the concepts of homogenization and
saturation, and related algorithms, to the context of difference ideals.
The feasibily of the proposed methods is shown by an implementation
in Maple that is the first to provide computations for systems of non-linear
partial difference equations. We make use of a test set based on the
discretization of concrete systems of non-linear partial differential
equations.
\end{abstract}



\section{Introduction}

An important idea at the intersection of many algebraic theories consists
in studying algebraic structures under the action of operators of different
nature, typically automorphisms and derivations. Classical roots of this
idea can be found clearly in invariant and representation theory, as well as
in the study of polynomial identities satisfied by associative algebras.
Recently, topics like algebraic statistic \cite{BD,HM} or entanglement
theory \cite{MW} have given new impulse and applications to the research
on such themes. Another fundamental source of inspiration is the theory
of differential and difference algebras introduced in the pioneeristic
work of Ritt \cite{Ri1,Ri2} and afterwards developed by Kolchin \cite{Ko},
Cohn \cite{Co}, Levin \cite{Le} and many others. From the point of view
of computational methods, starting from the algorithms proposed by Ritt
himself, a considerable advancement can be recorded in the differential
case (see for instance \cite{Se}). Much less has been achieved for the
algebras of difference polynomials where working algorithms can be found
mainly in the linear case \cite{GR}. Nevertheless, the interest
for such computations is relevant because of applications in the discretization
of systems of differential equations like the automatic generation of finite
difference schemes or the consistency analysis of finite difference
approximations \cite{Ge, GBM, LM}. The present paper contributes to this
research trend by concerning the development of effective methods for
systems of linear or non-linear partial difference equations with constant
coefficients. We provide also an implementation of such methods which is
the first to allow computations in the non-linear case. Specifically,
we generalize the theory of \Gr\ bases and related algorithms for ideals
of the algebra of partial difference polynomials. We are able to do this
in a general and systematic way, by defining classes of suitable
monomial orderings, by extending the Buchberger's algorithm and the concept
of grading to difference ideals, by defining truncated homogeneous
computations and even by introducing a suitable notion of homogenization
for such ideals. First contributions to such theory can be found in
\cite{Ge,LSL,LSL2}. In particular, owing to the notion of ``letterplace
correspondence/embedding'' introduced in \cite{LS,LSL,LSL2}, note that
\Gr\ bases computations for ideals of the free associative algebra
are a subclass of the same computations for ideals of the algebra
of ordinary difference polynomials.

The algebras of partial difference polynomials are free algebras in the
class of commutative algebras that are invariant under the action
by endomorphisms of a monoid isomorphic to $\N^r$. Then, the study
of \Gr\ bases for such algebras belongs to the general investigation
of computational methods for commutative rings or modules that have
suitable symmetries. Moreover, from the viewpoint of applications,
the algebras of partial difference polynomials are fundamental
structures in the formal theory of partial difference equations where
a set of unknown multivariate functions is assumed algebraically independent
together with all partial shifts of them. To provide symbolic computation
for systems of such equations is hence essential to introduce \Gr\ bases
methods. Based on a suitable definition of monomial orderings that are
compatible with shifts action and the description of large classes of them,
the present paper introduces variants of the Buchberger's algorithm
for partial difference ideals. These procedures take advantage of the monoid
symmetry essentially by killing all S-polynomials in a orbit except
for a minimal one. Note that the algebras of difference polynomials are
not Noetherian since they are polynomial rings in an infinite number
of variables and hence termination is not generally guaranteed for the
proposed algorithms. With the aim of improving this situation, we define
suitable gradings that are compatible with the monoid action and provide
filtrations of the algebra of partial difference polynomials with finitely
generated subalgebras. We obtain therefore the termination for finitely
generated graded difference ideals when computations are performed
within some bounded degree. For non-graded ideals but for monomial orderings
compatible with such gradings, we prove also criterions able to certify
that a \Gr\ basis computation performed over a suitable finite set
of variables that is within a sufficiently large degree, is a complete one.
Finally, the paper generalizes the notion of homogenization and saturation
to difference ideals with respect to the given gradings and provides
the algorithms to perfom this ideal operations. As a byproduct, one obtains
an alternative algorithm to compute \Gr\ bases of non-graded difference ideals
via homogeneous computations. By means of an implementation in Maple,
all these methods are finally experimented on difference ideals obtained
by the discretization of systems of non-linear differential equations.


\section{Algebras of partial difference polynomials}

Fix $K$ any field and let $\Sigma$ be a monoid (semigroup with identity)
that we denote multiplicatively. Let $A$ be a commutative $K$-algebra and
denote $\End_K(A)$ the monoid of $K$-algebra endomorphisms of $A$.
We call $A$ a {\em $\Sigma$-invariant algebra} or briefly a
{\em $\Sigma$-algebra} if there is a monoid homomorphism
$\rho:\Sigma\to\End_K(A)$. In this case, we denote $\sigma\cdot x =
\rho(\sigma)(x)$, for all $\sigma\in\Sigma$ and $x\in A$. Let $A,B$ be
$\Sigma$-algebras and $\varphi:A\to B$ be a $K$-algebra homomorphism.
We say that $\varphi$ is a {\em $\Sigma$-algebra homomorphism} if
$\varphi(\sigma\cdot x) = \sigma\cdot\varphi(x)$, for all $\sigma\in\Sigma$
and $x\in A$. Let $A$ be a $\Sigma$-algebra and let $I\subset A$ be an ideal.
We call $I$ a {\em $\Sigma$-invariant ideal} or simply a {\em $\Sigma$-ideal}
if $\Sigma\cdot I\subset I$. Clearly, all kernels of $\Sigma$-algebra
homomorphisms are $\Sigma$-ideals.

\begin{definition}
Let $A$ be a $\Sigma$-algebra and let $X\subset A$ be a subset. We say that
$A$ is {\em $\Sigma$-generated by $X$} if $A$ is generated by $\Sigma\cdot X$
as $K$-algebra. In other words, $A$ coincides with the smallest
$\Sigma$-subalgebra of $A$ containing $X$. In the same way, one defines
$\Sigma$-generation for the $\Sigma$-ideals.
\end{definition}

In the category of $\Sigma$-invariant algebras one can define free objects.
In fact, let $X$ be a set and denote $x(\sigma)$ each element $(x,\sigma)$ of the
product set $X(\Sigma) = X\times\Sigma$. Define $P = K[X(\Sigma)]$
the polynomial algebra in the commuting variables $x(\sigma)$. For any
element $\sigma\in\Sigma$ consider the $K$-algebra endomorphism
$\bar{\sigma}:P\to P$ such that $x(\tau)\mapsto x(\sigma\tau)$,
for all $x(\tau)\in X(\Sigma)$. Then, one has a faithful monoid representation
$\rho:\Sigma\to\End_K(P)$ such that $\rho(\sigma) = \bar{\sigma}$
ad hence $P$ is a $\Sigma$-algebra. Note that if $\Sigma$ is a
left-cancellative monoid then all maps $\rho(\sigma)$ are injective. 

\begin{proposition}
Let $A$ be a $\Sigma$-algebra and let $f:X\to A$ be any map. Then, there is
a unique $\Sigma$-algebra homomorphism $\varphi:P\to A$ such that
$\varphi(x(1)) = f(x)$, for all $x\in X$.
\end{proposition}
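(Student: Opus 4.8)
The plan is to build $\varphi$ by prescribing it on the polynomial generators of $P$ and then verifying it has all the required properties, leaving uniqueness for an easy argument at the end.

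First I would define $\varphi$ on generators by setting $\varphi(x(\sigma)) = \sigma\cdot f(x)$ for every $x\in X$ and $\sigma\in\Sigma$. Since $P = K[X(\Sigma)]$ is the free commutative $K$-algebra on the set $X(\Sigma)$, this assignment extends uniquely to a $K$-algebra homomorphism $\varphi\colon P\to A$. Because $\rho\colon\Sigma\to\End_K(A)$ is a monoid homomorphism we have $\rho(1) = \mathrm{id}_A$, hence $\varphi(x(1)) = 1\cdot f(x) = f(x)$, as desired.

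Next I would check that $\varphi$ is a $\Sigma$-algebra homomorphism, i.e. $\varphi(\bar{\tau}(g)) = \tau\cdot\varphi(g)$ for all $\tau\in\Sigma$ and $g\in P$. The point is that both $g\mapsto\varphi(\bar{\tau}(g))$ and $g\mapsto\tau\cdot\varphi(g)$ are composites of $K$-algebra homomorphisms $P\to A$, so it suffices to verify the identity on the algebra generators $x(\sigma)$ of $P$. There one computes $\varphi(\bar{\tau}(x(\sigma))) = \varphi(x(\tau\sigma)) = (\tau\sigma)\cdot f(x)$, while $\tau\cdot\varphi(x(\sigma)) = \tau\cdot(\sigma\cdot f(x))$; these agree precisely because $\rho$ respects composition in $\Sigma$.

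For uniqueness, suppose $\psi\colon P\to A$ is any $\Sigma$-algebra homomorphism with $\psi(x(1)) = f(x)$ for all $x$. The key structural observation is that $x(\sigma) = \bar{\sigma}(x(1))$, so the family $\{x(1)\mid x\in X\}$ already $\Sigma$-generates $P$ as a $K$-algebra. Hence $\psi(x(\sigma)) = \psi(\bar{\sigma}(x(1))) = \sigma\cdot\psi(x(1)) = \sigma\cdot f(x) = \varphi(x(\sigma))$, so $\psi$ and $\varphi$ coincide on all of $X(\Sigma)$ and therefore, being $K$-algebra homomorphisms out of the polynomial algebra $P$, they are equal. No step here is a genuine obstacle; the only thing to be careful about is the bookkeeping that $\bar{\sigma}$ acts variable-wise, which is exactly what makes $x(\sigma) = \bar{\sigma}(x(1))$ hold and drives both existence and uniqueness.
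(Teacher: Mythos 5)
Your proof is correct and follows essentially the same route as the paper: define $\varphi(x(\sigma)) = \sigma\cdot f(x)$, extend by freeness of the polynomial algebra, and verify $\Sigma$-equivariance on the generators via $\varphi(x(\tau\sigma)) = (\tau\sigma)\cdot f(x) = \tau\cdot(\sigma\cdot f(x))$. You simply make explicit the uniqueness argument and the reduction to generators, which the paper leaves implicit.
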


\begin{proof}
It is sufficient to define $\varphi(x(\sigma)) = \sigma\cdot f(x)$,
for all $x\in X$ and $\sigma\in\Sigma$. In fact, one has
$\varphi(\tau\cdot x(\sigma)) = \varphi(x(\tau\sigma)) =
\tau\sigma\cdot f(x) = \tau\cdot(\sigma\cdot f(x)) = \tau\cdot
\varphi(x(\sigma))$, for any $\tau\in\Sigma$.
\end{proof}

\begin{definition}
We call $P = K[X(\Sigma)]$ the {\em free $\Sigma$-algebra generated by $X$}.
In fact, $P$ is $\Sigma$-generated by the subset $X(1) = \{x_i(1)\mid x_i\in X\}$.
\end{definition}

In other words, the algebra $P$ is an essential tool in the theory
of $\Sigma$-algebras because any such algebra $A$ that is $\Sigma$-generated
by a set $X$ can be obtained as a quotient $\Sigma$-algebra $P/I$, where
$I$ is a $\Sigma$-ideal of $P$. For instance, from the viewpoint of computational
methods, if one develops them for $P$ then such methods can be extended
to any quotient $P/I$ as it is done in the classical theory of \Gr\ bases
for affine algebras. Note also that if $\Sigma$ is defined as the monoid
$\Inc(\N) = \{f:\N\to\N\mid f\ \mbox{strictly increasing}\}$, or some power
of this, one obtains an environment for computations in algebraic statistic
\cite{BD,HM}.

We want now to go in the direction of developing fundamental structures
for symbolic (formal) computation on systems of partial difference equations
with constant coefficients. From now on, {\em we assume} that
$X = \{x_0,x_1,\ldots\}$ is a finite or countable set and $\Sigma$
is a free commutative monoid generated by a finite set, say
$\{\sigma_1,\ldots,\sigma_r\}$. Then, we consider the free $\Sigma$-algebra
$P = K[X(\Sigma)]$. Note that $(\Sigma,\cdot)$ is a cancellative monoid
isomorphic to $(\N^r,+)$ and the monomorphisms $\rho(\sigma):P\to P$ have
infinite order for all $\sigma\neq 1$. For any $x_i(\sigma)\in X(\Sigma)$,
we call $i$ and $\sigma$ respectively the {\em index} and the {\em weight}
of the variable $x_i(\sigma)$. If we put $X(\sigma) =
\{x_i(\sigma)\mid x_i\in X\}$ and $x_i(\Sigma) =
\{x_i(\sigma)\mid\sigma\in\Sigma\}$ one has clearly $P =
\bigotimes_{\sigma\in\Sigma} K[X(\sigma)] =
\bigotimes_{x_i\in X} K[x_i(\Sigma)]$, where all subalgebras $K[X(\sigma)]$
are isomorphic to $K[X]$ and all subalgebras $K[x_i(\Sigma)]$ to $K[\Sigma]$.

\begin{definition}
The free $\Sigma$-algebra $P = K[X(\Sigma)]$ $(\Sigma =
\langle \sigma_1,\ldots,\sigma_r \rangle)$ is called the {\em algebra
of partial difference polynomials with constant coefficients}.
\end{definition}

The motivation for such name is in the formal theory of partial difference
equations \cite{Co,Le}. In this theory, in fact, the indeterminates $x_i(1)$
are by definition algebraically independent unknown functions $u_i(t_1,\ldots,t_r)$
in the variables $t_j$ and the maps $\rho(\sigma_k)$ are the shift operators
$u_i(t_1,\ldots,t_r)\mapsto u_i(t_1,\ldots,t_k+h,\ldots,t_r)$
where $h$ is a parameter (mesh step). If $\sigma = \prod_i \sigma_i^{\alpha_i}$
then the indeterminates $x_i(\sigma) = \sigma\cdot x_i(1)$ correspond to
the (algebraically independent) shifted functions
$u_i(t_1 + \alpha_1 h,\ldots,t_r + \alpha_r h) =
\sigma\cdot u_i(t_1,\ldots,t_r)$. Then, a $\Sigma$-ideal $I\subset P$
is also called a {\em partial difference ideal} and a $\Sigma$-basis
of $I$ corresponds to a system of partial difference equations
in the unknown functions $u_i(t_1,\ldots,t_r)$. One uses the term
{\em ordinary difference} when $r = 1$.
Note that the algebras of difference polynomials are not Noetherian
rings since they are polynomial rings in an infinite number of variables.
One has therefore that difference ideals have bases or $\Sigma$-bases which
are generally infinite. 

In the next sections we generalize the \Gr\ basis theory to the free
$\Sigma$-algebra $P = K[X(\Sigma)]$ of partial difference polynomials.
Clearly, one reobtains the classical theory when $\Sigma = \{1\}$ ($r = 0$)
that is $P = K[X]$. The starting point is to define monomial orderings
of $P$ which are compatible with the action of the monoid $\Sigma$. 


\section{Monomial $\Sigma$-orderings}

Denote by $M = \Mon(P)$ the set of all monomials of $P$. Note that even
if the set $X(\Sigma)$ is infinite (in fact countable), one can endow $P$
by monomial orderings. This is an important consequence of the Higman's Lemma
\cite{Hi} which can be stated in the following way (see for instance \cite{AH},
Corollary 2.3 and remarks at beginning of page 5175).

\begin{proposition}
Let $\prec$ be a total ordering on $M$ such that
\begin{itemize}
\item[(i)] $1\preceq m$ for all $m\in M$;
\item[(ii)] $\prec$ is compatible with multiplication on $M$, that is
if $m\prec n$ then $t m\prec t n $, for any $m,n,t\in M$.
\end{itemize}
Then $\prec$ is also a well-ordering of $M$ that is a monomial
ordering of $P$ if and only if the restriction of $\prec$ to the variables
set $X(\Sigma)$ is a well-ordering.
\end{proposition}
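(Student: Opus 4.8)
The plan is to prove the two implications separately, the ``only if'' direction being immediate and the converse carrying all the content. For ``only if'': every variable of $P$ is a monomial, so $X(\Sigma)\subset M$, and the restriction of a well-ordering to any subset is again a well-ordering; hence $\prec$ restricted to $X(\Sigma)$ is a well-ordering. Nothing further is needed here.

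For the converse, assume the restriction of $\prec$ to $X(\Sigma)$ is a well-ordering. Since $\prec$ is already a total order satisfying (i) and (ii), it will be a monomial ordering of $P$ as soon as it is well-founded, so the task reduces to ruling out an infinite strictly descending chain $m_1\succ m_2\succ m_3\succ\cdots$ in $M$. I would argue by contradiction. First I would represent each $m_i$ by a word $w_i=v_{i,1}v_{i,2}\cdots v_{i,k_i}$ whose letters $v_{i,\ell}\in X(\Sigma)$ are exactly the variables occurring in $m_i$ counted with their exponents, so that $m_i$ is the product of the letters of $w_i$ and each $w_i$ lies in the free monoid $X(\Sigma)^*$.

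The core step is the comparison lemma: if $w_i$ embeds in $w_j$ in the Higman sense, that is, there is a strictly increasing map $\iota\colon\{1,\ldots,k_i\}\to\{1,\ldots,k_j\}$ with $v_{i,\ell}\preceq v_{j,\iota(\ell)}$ for all $\ell$, then $m_i\preceq m_j$. Indeed, injectivity of $\iota$ means the monomial $\prod_{\ell}v_{j,\iota(\ell)}$ divides $m_j$, say $m_j=t\prod_{\ell}v_{j,\iota(\ell)}$, and then, extending (ii) to $\preceq$ in the obvious way, the telescoping estimate
\[
m_i=v_{i,1}\cdots v_{i,k_i}\preceq v_{j,\iota(1)}v_{i,2}\cdots v_{i,k_i}\preceq\cdots\preceq\prod_{\ell}v_{j,\iota(\ell)}
\]
together with $\prod_{\ell}v_{j,\iota(\ell)}\preceq t\prod_{\ell}v_{j,\iota(\ell)}=m_j$ (which is where (i) is used, via (ii)) gives $m_i\preceq m_j$. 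To conclude: since the restriction of $\prec$ to $X(\Sigma)$ is a well-ordering, $(X(\Sigma),\preceq)$ is a well-quasi-ordering, so by Higman's Lemma the free monoid $X(\Sigma)^*$ with the embedding order is also a well-quasi-ordering; applying this to the sequence $w_1,w_2,\ldots$ yields indices $i<j$ with $w_i$ embedding in $w_j$, hence $m_i\preceq m_j$, contradicting $m_i\succ m_j$. Therefore $\prec$ is well-founded, hence a monomial ordering of $P$.

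I expect the only delicate point to be the comparison lemma: one must check that the injectivity of the embedding $\iota$ is exactly what licenses the divisibility $\prod_{\ell}v_{j,\iota(\ell)}\mid m_j$, and that hypothesis (i) is genuinely needed to absorb the leftover cofactor $t$ at the end of the telescoping (it is here that the argument would fail for a ``local'' ordering). It is also worth stressing that Higman's Lemma, and not merely Dickson's Lemma, is indispensable, since $X(\Sigma)$ is infinite; once the well-quasi-ordering input is in place this is a black box, and the statement is essentially \cite{AH}, Corollary~2.3, to which the proposition already refers.
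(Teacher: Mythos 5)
Your proof is correct and follows exactly the route the paper indicates: the paper gives no proof of its own, only the pointer to Higman's Lemma via \cite{AH}, Corollary 2.3, and your argument (encode monomials as words over $X(\Sigma)$, apply Higman's Lemma to the well-quasi-ordered alphabet, and use (i) and (ii) to convert a Higman embedding $w_i\hookrightarrow w_j$ into $m_i\preceq m_j$) is precisely the standard argument behind that citation. The delicate points you flag — injectivity of the embedding giving divisibility of the selected subproduct into $m_j$, and hypothesis (i) being needed to absorb the cofactor — are handled correctly.
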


Clearly, it is easy to construct well-orderings for the set $X(\Sigma)$ which is 
in bijective correspondence to $\N^{r+1}$. Note that the monoid $\Sigma$
stabilizes the monomials set $M$ since it stabilizes $X(\Sigma)$.
We introduce then the following notion.

\begin{definition}
Let $\prec$ be a monomial ordering of $P$. We call $\prec$ a {\em (monomial)
$\Sigma$-ordering of $P$} if $\prec$ is compatible with the $\Sigma$-action
on $M$, that is $m\prec n$ implies that $\sigma\cdot m\prec \sigma\cdot n$
for all $m,n\in M$ and $\sigma\in\Sigma$.
\end{definition}

A straightforward consequence of this definition is the following result.

\begin{proposition}
\label{sigmaincreases}
Let $\prec$ be a monomial $\Sigma$-ordering of $P$. Then $m\preceq\sigma\cdot m$
for all $m\in M$ and $\sigma\in\Sigma$.
\end{proposition}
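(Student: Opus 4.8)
The plan is to reduce to the case where $\sigma$ is one of the free generators $\sigma_1,\ldots,\sigma_r$ of $\Sigma$, and then to dispatch that case by an orbit argument that exploits the fact that a monomial $\Sigma$-ordering is in particular a well-ordering of $M$ (by the Higman-based proposition above).

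First I would record the reduction. Write $\sigma = \sigma_{k_1}\sigma_{k_2}\cdots\sigma_{k_n}$ as a product of generators, so that $\sigma\cdot m = \bar{\sigma}_{k_1}(\bar{\sigma}_{k_2}(\cdots\bar{\sigma}_{k_n}(m)))$. If we already know that $m'\preceq\sigma_k\cdot m'$ for every $m'\in M$ and every generator $\sigma_k$, then setting $m_0 = m$ and $m_j = \sigma_{k_{n-j+1}}\cdot m_{j-1}$ we get $m = m_0\preceq m_1\preceq\cdots\preceq m_n = \sigma\cdot m$, each step being the base inequality applied to the current monomial. Note this telescoping does not even use compatibility of $\prec$ with the $\Sigma$-action, only the base case applied to different monomials. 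Hence it suffices to treat $\sigma = \sigma_k$.

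For the base case, suppose toward a contradiction that $\sigma_k\cdot m\prec m$ for some $m\in M$; necessarily $m\neq 1$, since $\sigma_k\cdot 1 = 1$. Because $\prec$ is a $\Sigma$-ordering, applying $\bar\sigma_k$ repeatedly yields $\sigma_k^{j+1}\cdot m\prec\sigma_k^{j}\cdot m$ for all $j\geq 0$, i.e.\ an infinite strictly descending chain $m\succ\sigma_k\cdot m\succ\sigma_k^{2}\cdot m\succ\cdots$ in $M$. The terms of this chain are pairwise distinct: writing $m = \prod_l x_{i_l}(\tau_l)$ we have $\sigma_k^{j}\cdot m = \prod_l x_{i_l}(\sigma_k^{j}\tau_l)$, and comparing, say, the smallest $k$-th exponent occurring among the weights $\sigma_k^{j}\tau_l$ shows $\sigma_k^{j}\cdot m\neq\sigma_k^{j'}\cdot m$ whenever $j\neq j'$. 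An infinite strictly descending chain is impossible because $\prec$, being a monomial ordering of $P$, is a well-ordering of $M$. Therefore $m\preceq\sigma_k\cdot m$, which completes the base case and, by the reduction above, the proof. The only mildly delicate point is the distinctness of the monomials in the orbit $\{\sigma_k^{j}\cdot m\}_{j\geq 0}$; beyond that the argument is a direct application of the well-ordering property together with $\Sigma$-compatibility, so I expect no genuine obstacle.
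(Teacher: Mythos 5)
Your proof is correct and rests on the same idea as the paper's: if $\sigma\cdot m\prec m$, then $\Sigma$-compatibility produces the infinite descending chain $m\succ\sigma\cdot m\succ\sigma^2\cdot m\succ\cdots$, contradicting that $\prec$ is a well-ordering (the paper applies this directly to an arbitrary $\sigma$, making your reduction to generators unnecessary). Your separate verification that the orbit elements are pairwise distinct is also redundant, since strict descent in a total order already forces distinctness.
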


\begin{proof}
By contradiction, assume that there are $m,\sigma$ such that $m\succ\sigma\cdot m$.
Then, $\sigma\cdot m\succ \sigma^2\cdot m$ and by induction one obtains
the infinite descending chain $m\succ\sigma\cdot m\succ\sigma^2\cdot m\succ\ldots$
which contradicts that $\prec$ is a well-ordering.
\end{proof}

The orderings on the variables set $X(\Sigma)$ that can be extended to
monomial $\Sigma$-orderings are as follows.

\begin{definition}
Let $\prec$ be a well-ordering of $X(\Sigma)$. We call $\prec$ a {\em (variable)
$\Sigma$-ranking of $P$} if $\prec$ is compatible with the $\Sigma$-action
on $X(\Sigma)$, that is $u\prec v$ implies that $\sigma\cdot u\prec \sigma\cdot v$
for all $u,v\in X(\Sigma)$ and $\sigma\in\Sigma$.
\end{definition}

As for Proposition \ref{sigmaincreases}, we have that if $\prec$ is a
$\Sigma$-ranking then $u\preceq \sigma\cdot u$ for all $u\in X(\Sigma)$
and $\sigma\in\Sigma$. Moreover, if $X$ is a finite set then condition
$u\preceq\sigma\cdot u$ for all $u,\sigma$ together with $\Sigma$-compatibility
implies that $\prec$ is a well-ordering by applying Dickson's Lemma
(or Higman's Lemma) to $\Sigma$ which is isomorphic to $\N^r$. However,
note that in this paper the set $X$ may be also countable.

Owing to the decompositions $X(\Sigma) = \bigcup_{\sigma\in\Sigma} X(\sigma) =
\bigcup_{x_i\in X} x_i(\Sigma)$ of the variables set of the ring $P$,
we can define $\Sigma$-rankings of $P$ in a natural way. Denote by $Q$
the monoid $K$-algebra defined by the free commutative monoid $\Sigma =
\langle \sigma_1,\ldots,\sigma_r \rangle$. In other words, $Q =
K[\sigma_1,\ldots,\sigma_r]$ is the polynomial algebra
in the commutative variables $\sigma_i$. From now on, we assume that
$\Sigma$ is endowed with a monomial ordering $<$ of $Q$. By abuse,
we call $<$ a {\em monomial ordering of $\Sigma$}.

\begin{definition}
Fix $<$ a monomial ordering of $\Sigma$. For all
$x_i(\sigma),x_j(\tau)\in X(\Sigma)$, we define:
\begin{itemize}
\item[(i)] $x_i(\sigma)\prec x_j(\tau)$ if and only if
$\sigma < \tau$ or $\sigma = \tau$ and $i < j$. In other words,
$X(\sigma)\prec X(\tau)$ when $\sigma < \tau$.
\item[(ii)] $x_i(\sigma) \prec' x_j(\tau)$ if and only if
$i < j$ or $i = j$ and $\sigma < \tau$. In other words,
$x_i(\Sigma)\prec' x_j(\Sigma)$ when $i < j$.
\end{itemize}
Clearly $\prec$ and $\prec'$ are both $\Sigma$-rankings of $P$ that
we call respectively {\em weight} and {\em index $\Sigma$-ranking
defined by a monomial ordering of $\Sigma$}.
\end{definition}

For all $x_i\in X$ and $\sigma\in\Sigma$ denote $P(\sigma) = K[X(\sigma)],
M(\sigma) = \Mon(P(\sigma))$ and $P(x_i) = K[x_i(\Sigma)],
M(x_i) = \Mon(P(x_i))$. Owing to the tensor decompositions $P =
\bigotimes_{\sigma\in\Sigma} P(\sigma) = \bigotimes_{x_i\in X} P(x_i)$,
one has that a monomial $m\in M$ can be uniquely written as
$m = m(\delta_1)\cdots m(\delta_k) = m(x_{i_1})\cdots m(x_{i_l})$,
where $m(\delta_p)\in M(\delta_p), m(x_{i_p})\in M(x_{i_p})$ and
$\delta_1 > \ldots > \delta_k, i_1 > \ldots > i_l$. By means of such
presentations we can define block monomial orderings of $P$
extending weight and index ranking. Recall that $\rho:\Sigma\to \End_K(P)$
is the faithful monoid representation defined by the action of
$\Sigma$ over $P$. For any $\sigma\in\Sigma$ one has that the map
$\rho(\sigma)$ defines an isomorphism between the monoids $M(1),M(\sigma)$
and hence between the algebras $P(1),P(\sigma)$. In other words, we have
$M(\sigma) = \sigma\cdot M(1),P(\sigma) = \sigma\cdot P(1)$.

\begin{definition}
\label{weightmord}
Fix $\prec$ a monomial ordering of the subalgebra $P(1)\subset P$
and extend it to all subalgebras $P(\sigma)$ $(\sigma\in\Sigma)$ by the
isomorphisms $\rho(\sigma)$. In other words, we put
$\sigma\cdot m\prec \sigma\cdot n$ if and only if $m\prec n$,
for any $m,n\in M(1)$. Then, for all $m,n\in M,
m = m(\delta_1)\cdots m(\delta_k), n = n(\delta_1)\cdots n(\delta_k)$
with $\delta_1 > \ldots > \delta_k$ we define $m\prec_w n$ if and only if
$m(\delta_j) = n(\delta_j)$ if $j < i$ and $m(\delta_i)\prec n(\delta_i)$
for some $1\leq i\leq k$. Clearly, the restriction of $\prec_w$ to the
variables of $P$ is just the weight $\Sigma$-ranking.
\end{definition}

\begin{proposition}
The ordering $\prec_w$ is a $\Sigma$-ordering of $P$.
\end{proposition}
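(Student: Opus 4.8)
The plan is to check the three properties packed into the statement: that $\prec_w$ is a total ordering, a monomial ordering of $P$, and compatible with the $\Sigma$-action. The first two I would deduce from the Higman's Lemma proposition recalled above, so the genuinely new content is the $\Sigma$-compatibility.

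First I fix notation for the comparison. Given $m,n\in M$, let $\delta_1 > \ldots > \delta_k$ be the union of the weights occurring in the block decompositions of $m$ and $n$, and write $m = m(\delta_1)\cdots m(\delta_k)$, $n = n(\delta_1)\cdots n(\delta_k)$ with $m(\delta_j),n(\delta_j)\in M(\delta_j)$, some factors possibly equal to $1$. With this padding convention $\prec_w$ is an ordinary ``block lexicographic'' comparison, running from the largest weight downwards and using on each block the ordering of $M(\delta_j)$ transported from $M(1)$ by the algebra isomorphism $\rho(\delta_j)$. Since $\prec$ is a monomial ordering of $P(1)$, each transported ordering of $M(\delta_j)$ is a total ordering with least element $1$, compatible with multiplication inside $M(\delta_j)$; totality of $\prec_w$ and the property $1\preceq_w m$ follow at once. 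For compatibility with multiplication, given $t\in M$ and $m\prec_w n$, take the least index $i$ with $m(\delta_i)\ne n(\delta_i)$; then $t(\delta_j)m(\delta_j) = t(\delta_j)n(\delta_j)$ for $j<i$ while $t(\delta_i)m(\delta_i)\prec t(\delta_i)n(\delta_i)$ in $M(\delta_i)$, because $\rho(\delta_i)^{-1}$ is a monoid isomorphism and $\prec$ is compatible with multiplication on $M(1)$; hence $tm\prec_w tn$. Since the restriction of $\prec_w$ to $X(\Sigma)$ is the weight $\Sigma$-ranking, which is a well-ordering, the Higman's Lemma proposition shows that $\prec_w$ is a monomial ordering of $P$.

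It remains to prove $\Sigma$-compatibility. Let $\tau\in\Sigma$ and suppose $m\prec_w n$, written as above with first difference at block $i$. Applying $\rho(\tau)$ block by block gives $\tau\cdot m = (\tau\cdot m(\delta_1))\cdots(\tau\cdot m(\delta_k))$ with $\tau\cdot m(\delta_j) = \rho(\tau)(m(\delta_j))\in M(\tau\delta_j)$, and similarly for $n$. Because $<$ is a monomial ordering of $\Sigma$, it is compatible with multiplication and, by cancellativity of $\Sigma$, injective, so $\delta_1 > \ldots > \delta_k$ forces $\tau\delta_1 > \ldots > \tau\delta_k$; thus the displayed products are already the (padded) block decompositions of $\tau\cdot m$ and $\tau\cdot n$ over the common weight sequence $\tau\delta_1 > \ldots > \tau\delta_k$. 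For $j<i$ the blocks agree, so it suffices to show $\rho(\tau)(m(\delta_i))\prec\rho(\tau)(n(\delta_i))$ in $M(\tau\delta_i)$. The key observation is that $\rho(\tau)$ restricts to an order isomorphism $M(\delta_i)\to M(\tau\delta_i)$: since $\rho$ is a monoid homomorphism, $\rho(\tau\delta_i) = \rho(\tau)\circ\rho(\delta_i)$ as maps $M(1)\to M(\tau\delta_i)$, whence $\rho(\tau\delta_i)^{-1}\circ\rho(\tau) = \rho(\delta_i)^{-1}$ on $M(\delta_i)$; by the definition of the transported orderings this means $\rho(\tau)(a)\prec\rho(\tau)(b)$ in $M(\tau\delta_i)$ if and only if $a\prec b$ in $M(\delta_i)$. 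Applying this with $a = m(\delta_i)$ and $b = n(\delta_i)$ yields $\tau\cdot m\prec_w\tau\cdot n$, as wanted.

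The main obstacle I anticipate is purely organisational: handling the block decompositions when $m$ and $n$ involve different weights (the padding convention), and keeping track of how the weight sequence and the per-block orderings are transported by $\rho(\tau)$. The substantive point is that the transported orderings on the various $M(\sigma)$ are mutually coherent under the $\Sigma$-action --- precisely the identity $\rho(\tau\delta_i) = \rho(\tau)\circ\rho(\delta_i)$ --- which makes $\rho(\tau)$ an order isomorphism $M(\delta_i)\to M(\tau\delta_i)$; everything else reduces to $\prec$ being a monomial ordering on the single subalgebra $P(1)$.
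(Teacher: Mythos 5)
Your proof is correct and the substantive part --- the $\Sigma$-compatibility, argued by transporting each block comparison back to $M(1)$ via $\rho(\delta_i)^{-1}$ and using $\rho(\tau\delta_i)=\rho(\tau)\circ\rho(\delta_i)$ --- is essentially the same argument as the paper's, which writes this coherence as $m(\delta_i)\prec n(\delta_i)\iff m(1)\prec n(1)\iff m(\sigma\delta_i)\prec n(\sigma\delta_i)$. The only difference is that you also verify explicitly that $\prec_w$ is a monomial ordering via the Higman's Lemma proposition, a point the paper leaves implicit.
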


\begin{proof}
Note that if $m = m(\delta_1)\cdots m(\delta_k)\in M$ with
$m(\sigma_i)\in M(\sigma_i)$ and $\delta_1 > \ldots > \delta_k$ then
$\sigma\cdot m = m(\sigma\delta_1)\cdots m(\sigma\delta_k)$, where
$m(\sigma\delta_i) = \sigma\cdot m(\delta_i)\in M(\sigma\delta_i)$
and $\sigma\delta_1 > \ldots > \sigma\delta_k$ since $<$ is a monomial
ordering of $\Sigma$. Assume $m\prec_w n$ that is $m(\delta_j) = n(\delta_j)$
for $j < i$ and $m(\delta_i)\prec n(\delta_i)$. Clearly $m(\sigma\delta_j) =
n(\sigma\delta_j)$ for $j < i$ and one has $m(\delta_i)\prec n(\delta_i)$
if and only if $m(1)\prec n(1)$ if and only if
$m(\sigma\delta_i)\prec n(\sigma\delta_i)$. Then, we conclude that
$\sigma\cdot m\prec_w \sigma\cdot n$.
\end{proof}

Note that we have also a monoid faithful representation $\phi:\N \to \End_K(P)$
such that the endomorphism $\phi(i)$ is defined as
$x_j(\sigma)\mapsto x_{i+j}(\sigma)$ for any $i,j\geq 0$ and $\sigma\in\Sigma$.
Clearly $\phi(i)$ induces isomorphism between the monoids $M(x_0),M(x_i)$
and the algebras $P(x_0),P(x_i)$. The algebra $P(x_0)$ can be easily endowed
with a $\Sigma$-ordering. For instance, since $P(x_0) =
\bigotimes_{\sigma\in\Sigma} K[x_0(\sigma)]$ one can define a lexicographic
ordering as in Definition \ref{weightmord}.

\begin{definition}
\label{indexmord}
Fix $\prec$ a monomial $\Sigma$-ordering of the subalgebra $P(x_0)\subset P$
and extend it to all subalgebras $P(x_i)$ $(x_i\in X)$ by the isomorphisms
$\phi(i)$. For any $m,n\in M, m = m(x_{i_1})\cdots m(x_{i_k}),
n = n(x_{i_1})\cdots n(x_{i_k})$ with $i_1 > \ldots > i_k$ we put
$m\prec_i n$ if and only if $m(x_{i_q}) = n(x_{i_q})$ if $q < p$ and
$m(x_{i_p})\prec n(x_{i_p})$ for some $1\leq p\leq k$. Note that
the restriction of $\prec_i$ to the variables of $P$ is the index
$\Sigma$-ranking.
\end{definition}

\begin{proposition}
The ordering $\prec_i$ is a $\Sigma$-ordering of $P$.
\end{proposition}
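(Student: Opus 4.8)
The plan is to mirror the proof already given for $\prec_w$, the only change being that the index blocks $P(x_i)$ are permuted trivially by the $\Sigma$-action (not at all), whereas for $\prec_w$ the weight blocks $P(\sigma)$ were permuted by $\sigma$. First I would record that, since $\sigma\cdot x_j(\tau) = x_j(\sigma\tau)$ leaves the index $j$ unchanged, the set of indices occurring in a monomial is invariant under the $\Sigma$-action. Hence if $m = m(x_{i_1})\cdots m(x_{i_k})$ and $n = n(x_{i_1})\cdots n(x_{i_k})$ with $i_1 > \ldots > i_k$ is the common block decomposition used to compare $m$ and $n$ (allowing some $m(x_{i_q})$ or $n(x_{i_q})$ to equal $1$), then $\sigma\cdot m = (\sigma\cdot m(x_{i_1}))\cdots(\sigma\cdot m(x_{i_k}))$ and $\sigma\cdot n = (\sigma\cdot n(x_{i_1}))\cdots(\sigma\cdot n(x_{i_k}))$ are again written over the very same blocks $x_{i_1} > \ldots > x_{i_k}$, with $\sigma\cdot m(x_{i_p}),\ \sigma\cdot n(x_{i_p})\in M(x_{i_p})$.

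Next I would observe that the isomorphism $\phi(i)\colon P(x_0)\to P(x_i)$ used to transport $\prec$ from $P(x_0)$ to $P(x_i)$ commutes with the $\Sigma$-action: indeed $\phi(i)(\sigma\cdot x_0(\tau)) = \phi(i)(x_0(\sigma\tau)) = x_i(\sigma\tau) = \sigma\cdot x_i(\tau) = \sigma\cdot\phi(i)(x_0(\tau))$, so $\phi(i)$ is a $\Sigma$-algebra isomorphism. Consequently, for $a = \phi(i)(a_0)$ and $b = \phi(i)(b_0)$ in $M(x_i)$ one has $a\prec b \iff a_0\prec b_0$ by definition of the transported order, which implies $\sigma\cdot a_0\prec\sigma\cdot b_0$ because $\prec$ is a $\Sigma$-ordering of $P(x_0)$, and this is in turn equivalent to $\phi(i)(\sigma\cdot a_0)\prec\phi(i)(\sigma\cdot b_0) \iff \sigma\cdot a\prec\sigma\cdot b$. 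So the restriction of $\prec_i$ to each block $P(x_i)$ is $\Sigma$-compatible. (Since $\rho(\sigma)$ is injective — $\Sigma$ being cancellative — and $\prec$ is total, the first implication is actually an equivalence, but only the stated direction is needed.)

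Finally I would combine the two steps. Assume $m\prec_i n$, witnessed by $m(x_{i_q}) = n(x_{i_q})$ for $q < p$ and $m(x_{i_p})\prec n(x_{i_p})$. By the first step, $\sigma\cdot m$ and $\sigma\cdot n$ have block decompositions over the same list $x_{i_1} > \ldots > x_{i_k}$; applying the endomorphism $\rho(\sigma)$ to the equalities yields $\sigma\cdot m(x_{i_q}) = \sigma\cdot n(x_{i_q})$ for $q < p$, and the second step yields $\sigma\cdot m(x_{i_p})\prec\sigma\cdot n(x_{i_p})$. By the definition of $\prec_i$ this gives $\sigma\cdot m\prec_i\sigma\cdot n$, which is exactly $\Sigma$-compatibility; together with the fact that $\prec_i$ is a monomial ordering (a block ordering built from monomial orderings, extending the index $\Sigma$-ranking, via Higman's Lemma) this proves the claim. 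The only substantive point is the commutation $\phi(i)\circ\rho(\sigma) = \rho(\sigma)\circ\phi(i)$; everything else is the same block-ordering bookkeeping used for $\prec_w$, so I anticipate no genuine obstacle.
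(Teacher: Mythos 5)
Your proposal is correct and follows essentially the same route as the paper: block stability of the decomposition $m = m(x_{i_1})\cdots m(x_{i_k})$ under the $\Sigma$-action, plus $\Sigma$-compatibility of the transported ordering on each $P(x_{i_p})$. The only difference is that you spell out the commutation $\phi(i)\circ\rho(\sigma)=\rho(\sigma)\circ\phi(i)$, which the paper compresses into the phrase ``since $\prec$ is a $\Sigma$-ordering of $P(x_0)$ and therefore of $P(x_{i_p})$''.
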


\begin{proof}
Note that if $m = m(x_{i_1})\cdots m(x_{i_k})\in M$ with
$m(x_{i_p})\in M(x_{i_p})$ and $i_1 > \ldots > i_k$ then
$\sigma\cdot m = m'(x_{i_1})\cdots m'(x_{i_k})$ where
$m'(x_{i_p}) = \sigma\cdot m(x_{i_p})\in M(x_{i_p})$.
Suppose $m\prec_i n$ that is $m(x_{i_q}) = n(x_{i_q})$ if $q < p$
and $m(x_{i_p})\prec n(x_{i_p})$. We have clearly that
$m'(x_{i_q}) = n'(x_{i_q})$. Moreover, since $\prec$ is a
$\Sigma$-ordering of $P(x_0)$ and therefore of $P(x_{i_p})$,
one has also $m'(x_{i_p})\prec n'(x_{i_p})$ that is
$\sigma\cdot m\prec_i \sigma\cdot n$.
\end{proof}

We call the above monomial $\Sigma$-orderings $\prec_w,\prec_i$
of $P$ respectively {\em weight $\Sigma$-ordering defined by a monomial
ordering of $P(1)$} and {\em index $\Sigma$-ordering of $P$ defined by a
monomial $\Sigma$-ordering of $P(x_0)$}. Clearly, both these orderings
depend also on a monomial ordering of $\Sigma$. Note that index
$\Sigma$-orderings are suitable for generation of finite difference schemes
for partial differential equations \cite{GB,GBM}. The weight
$\Sigma$-orderings are instead compatible with the gradings
of the $\Sigma$-algebra $P$ we introduce in Section 6. For this reason
they are suitable for obtaining complete \Gr\ bases from truncated
computations (see Proposition \ref{finsigmacrit}).

To make things more explicit, we give now an example of a weight
and an index $\Sigma$-ordering.  Fix $X = \{x,y,z\}$ and $\Sigma =
\langle \sigma_1,\sigma_2 \rangle$. To simplify the notation,
we identify the monoid $(\Sigma,\cdot)$ with $(\N^2,+)$ by means
of the isomorphism $\sigma_1^i\sigma_2^j \mapsto (i,j)$.
Then, we fix the {\em degrevlex} monomial ordering on $\Sigma$
with $\sigma_1 > \sigma_2$ that is
\[
\ldots > (2,0) > (1,1) > (0,2) > (1,0) > (0,1) > (0,0)
\]
and assume $P(x) = K[x(i,j)\mid i,j\geq 0]$ be endowed with the {\em lex}
monomial ordering such that
\[
\ldots\succ x(2,0)\succ x(1,1)\succ x(0,2)\succ x(1,0)\succ x(0,1)\succ x(0,0).
\]
Finally, we fix also the {\em lex} ordering on $P(0,0) =
K[x(0,0),y(0,0),z(0,0)]$ with $x(0,0)\succ y(0,0)\succ z(0,0)$.
By isomorphisms, one has clearly the same ordering on $P(y),P(z)$
and $P(i,j) = K[x(i,j),y(i,j),z(i,j)]$, for all $i,j\geq 0, (i,j)\neq (0,0)$.
Then, a weight $\Sigma$-ordering is defined on $P = K[x(i,j),y(i,j),z(i,j)\mid
i,j\geq 0]$ as the block monomial ordering corresponding to the tensor
decomposition
\[
P = \ldots\otimes P(2,0)\otimes P(1,1)\otimes P(0,2)\otimes
P(1,0)\otimes P(0,1)\otimes P(0,0).
\]
In a similar way, one defines an index $\Sigma$-ordering on $P$
owing to the decomposition
\[
P = P(x)\otimes P(y)\otimes P(z).
\]
Similar $\Sigma$-orderings have been used for the examples contained
in Section 5 and 8 and for the computational experiments presented
in Section 9 (see also the Appendix).


\section{\Gr\ $\Sigma$-bases}

From now on, we consider $P$ endowed with a monomial $\Sigma$-ordering $\prec$.
Let $f = \sum_i c_i m_i\in P$ with $m_i\in M, c_i\in K,c_i\neq 0$.
We denote as usual $\lm(f) = m_k = \max_\prec\{m_i\}$, $\lc(f) = c_k$
and $\lt(f) = \lc(f)\lm(f)$.
If $G\subset P$ we put $\lm(G) = \{\lm(f) \mid f\in G,f\neq 0\}$ and
we denote as $\LM(G)$ the ideal of $P$ generated by $\lm(G)$. 

\begin{proposition}
Let $G\subset P$. Then $\lm(\Sigma\cdot G) = \Sigma\cdot \lm(G)$.
In particular, if $I$ is a $\Sigma$-ideal of $P$ then $\LM(I)$
is also a $\Sigma$-ideal.
\end{proposition}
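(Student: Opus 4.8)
The plan is to prove the two claims in turn, deriving the second from the first. For the first, $\lm(\Sigma\cdot G) = \Sigma\cdot\lm(G)$, the key observation is that for a fixed $\sigma\in\Sigma$ the endomorphism $\rho(\sigma):P\to P$ acts injectively on monomials (since $\Sigma$ acts injectively on $X(\Sigma)$, as noted after the definition of $P$) and, because $\prec$ is a $\Sigma$-ordering, is order-preserving on $M$: $m\prec n$ implies $\sigma\cdot m\prec\sigma\cdot n$. So for any nonzero $f = \sum_i c_i m_i\in P$ with distinct $m_i$, the polynomial $\sigma\cdot f = \sum_i c_i(\sigma\cdot m_i)$ again has distinct monomials $\sigma\cdot m_i$ with the same coefficients, and the largest among them is $\sigma\cdot\lm(f)$. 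Hence $\lm(\sigma\cdot f) = \sigma\cdot\lm(f)$. Running over all $f\in G$ with $f\neq 0$ and all $\sigma\in\Sigma$ — noting that $\sigma\cdot f\neq 0$ since $\rho(\sigma)$ is injective — gives exactly $\lm(\Sigma\cdot G) = \Sigma\cdot\lm(G)$.

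Next I would prove the ``in particular'' statement. Suppose $I$ is a $\Sigma$-ideal of $P$; I must show $\Sigma\cdot\LM(I)\subset\LM(I)$. Since $\LM(I)$ is generated by the monomial set $\lm(I)$ and the $\Sigma$-action is by $K$-algebra endomorphisms, it suffices to show $\sigma\cdot m\in\LM(I)$ for each $m\in\lm(I)$ and each $\sigma\in\Sigma$; indeed, once the generators of $\LM(I)$ are sent into $\LM(I)$, every element of $\Sigma\cdot\LM(I)$ lands in $\LM(I)$ because $\rho(\sigma)$ is an algebra homomorphism and $\LM(I)$ is an ideal. Now fix $m\in\lm(I)$, say $m = \lm(f)$ with $f\in I$, $f\neq 0$. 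Because $I$ is $\Sigma$-invariant, $\sigma\cdot f\in I$, and by the first part $\lm(\sigma\cdot f) = \sigma\cdot m$. Hence $\sigma\cdot m\in\lm(I)\subset\LM(I)$, which completes the argument.

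I do not anticipate a serious obstacle here; the statement is essentially a bookkeeping consequence of two facts already established, namely that $\prec$ is a $\Sigma$-ordering (order-compatibility with the action) and that the monoid acts injectively on monomials. The one place to be slightly careful is the reduction in the second part: one should make explicit that it is enough to check the action on the monomial generators $\lm(I)$ of $\LM(I)$ rather than on an arbitrary element of $\LM(I)$, using that each $\rho(\sigma)$ is a $K$-algebra endomorphism. A second minor point worth stating is that $\sigma\cdot f\neq 0$ whenever $f\neq 0$, so that $\lm(\sigma\cdot f)$ is defined; this is exactly the injectivity of $\rho(\sigma)$ on $P$, which holds since $\Sigma$ is cancellative.
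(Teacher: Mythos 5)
Your proposal is correct and follows essentially the same route as the paper: the core fact in both is that a $\Sigma$-ordering forces $\lm(\sigma\cdot f)=\sigma\cdot\lm(f)$ for $f\neq 0$, from which $\Sigma\cdot\lm(I)\subset\lm(I)$ and hence the $\Sigma$-invariance of $\LM(I)=\langle\lm(I)\rangle$ follow. You merely spell out two points the paper leaves implicit (that $\sigma\cdot f\neq 0$ and that it suffices to check the action on the monomial generators), which is fine.
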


\begin{proof}
Since $P$ is endowed with a $\Sigma$-ordering, one has that
$\lm(\sigma\cdot f) = \sigma\cdot\lm(f)$ for any $f\in P,f\neq 0$ and
$\sigma\in\Sigma$. Then, $\Sigma\cdot\lm(I) = \lm(\Sigma\cdot I)\subset
\lm(I)$ and therefore $\LM(I) = \langle \lm(I) \rangle$ is a 
$\Sigma$-ideal.
\end{proof}

\begin{definition}
Let $I\subset P$ be a $\Sigma$-ideal and $G\subset I$.
We call $G$ a {\em \Gr\ $\Sigma$-basis} of $I$ if $\lm(G)$ is a
$\Sigma$-basis of $\LM(I)$. In other words, $\Sigma\cdot G$
is a \Gr\ basis of $I$ as $P$-ideal.
\end{definition}

Since the monoid $\Sigma$ is assumed isomorphic to $\N^r$ that is
$\Sigma$-ideals are partial difference ideals, we may say that \Gr\
$\Sigma$-bases are {\em partial difference \Gr\ bases} \cite{Ge}.
Another possible name is {\em $\Sigma$-equivariant \Gr\ bases} \cite{BD}.
Simplicity and generality lead us to the previous definition
that already appeared in \cite{LSL2}.

Let $f,g\in P,f,g\neq 0$ and put $\lt(f) = c m, \lt(g) = d n$
with $m,n\in M$ and $c,d\in K$. If $l = \lcm(m,n)$ we define
as usual the {\em S-polynomial} $\spoly(f,g) = (l/c m) f - (l/d n) g$.
Clearly $\spoly(f,g) = - \spoly(g,f)$ and $\spoly(f,f) = 0$.

\begin{proposition}
\label{sigmaspoly}
For all $f,g\in P,f,g\neq 0$ and for any $\sigma\in\Sigma$ one has
$\sigma\cdot\spoly(f,g) = \spoly(\sigma\cdot f,\sigma\cdot g)$.
\end{proposition}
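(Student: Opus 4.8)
The plan is to unwind both sides of the claimed identity using the explicit definition of the S-polynomial together with the fact that $\rho(\sigma)$ is a $K$-algebra endomorphism that maps monomials to monomials and, crucially, is \emph{multiplicative} on $M$ in the strong sense that $\sigma\cdot(mn) = (\sigma\cdot m)(\sigma\cdot n)$ and $\sigma\cdot \lcm(m,n) = \lcm(\sigma\cdot m,\sigma\cdot n)$ because $\rho(\sigma)$ restricts to a monoid homomorphism $M(1)\to M(\sigma)$ (in fact an isomorphism onto $M(\sigma)$, as noted before Definition \ref{weightmord}). First I would write $\lt(f) = c\,m$, $\lt(g) = d\,n$ with $m,n\in M$, $c,d\in K^*$, and set $l = \lcm(m,n)$, so that by definition $\spoly(f,g) = (l/m)f - (l/n)g$ with the convention that $l/m$ and $l/n$ denote the (monomial) quotients, absorbing $c,d$ into scalars. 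Applying $\rho(\sigma)$ and using $K$-linearity gives $\sigma\cdot\spoly(f,g) = (\sigma\cdot(l/m))(\sigma\cdot f) - (\sigma\cdot(l/n))(\sigma\cdot g)$, where I use that $\sigma$ fixes the scalars $1/c,1/d$ and is multiplicative on the product of the monomial $l/m$ with the polynomial $f$.

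Next I would identify the leading terms of $\sigma\cdot f$ and $\sigma\cdot g$. Since $\prec$ is a $\Sigma$-ordering, $\lm(\sigma\cdot f) = \sigma\cdot\lm(f) = \sigma\cdot m$ and similarly $\lm(\sigma\cdot g) = \sigma\cdot n$, while the leading coefficients are unchanged: $\lc(\sigma\cdot f) = c$, $\lc(\sigma\cdot g) = d$, because $\rho(\sigma)$ sends the distinct monomials occurring in $f$ to distinct monomials (it is injective on $M$, $\Sigma$ being cancellative) without collapsing or rescaling them. Therefore $\spoly(\sigma\cdot f,\sigma\cdot g) = (l'/(\sigma\cdot m))(\sigma\cdot f) - (l'/(\sigma\cdot n))(\sigma\cdot g)$ where $l' = \lcm(\sigma\cdot m,\sigma\cdot n)$. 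It remains to check that $l' = \sigma\cdot l$ and that the monomial quotients match, i.e. $\sigma\cdot(l/m) = l'/(\sigma\cdot m)$ and likewise for $n$. Both follow from the fact that $\rho(\sigma)\colon M(1)\to M(\sigma)$ is a monoid isomorphism: writing exponents, $\sigma\cdot$ acts on the exponent vector of a monomial by a fixed injective reindexing $x_i(\tau)\mapsto x_i(\sigma\tau)$ of the variable set, and lcm and division of monomials are computed coordinatewise (componentwise max and difference of exponents), hence commute with any injective reindexing.

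Assembling these two computations, the right-hand side equals $(\sigma\cdot(l/m))(\sigma\cdot f) - (\sigma\cdot(l/n))(\sigma\cdot g)$, which is exactly the expression obtained for $\sigma\cdot\spoly(f,g)$, proving the identity.

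I expect no serious obstacle here; this is essentially a bookkeeping verification. The one point that deserves a sentence of care is the claim that $\lc(\sigma\cdot f) = \lc(f)$, which relies on $\rho(\sigma)$ being injective on monomials (so that no cancellation occurs when applying $\sigma$ to $f = \sum_i c_i m_i$); this injectivity is guaranteed because $\Sigma$ is cancellative, as recorded in the text. The second mildly delicate point is the compatibility $\sigma\cdot\lcm(m,n) = \lcm(\sigma\cdot m,\sigma\cdot n)$ and the analogous statement for monomial quotients, both of which I would justify in one line by appealing to the monoid isomorphism $M(1)\cong M(\sigma)$ induced by $\rho(\sigma)$ and the coordinatewise description of lcm and division.
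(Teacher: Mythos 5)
Your proof is correct and follows essentially the same route as the paper: the paper's own argument is a one-line version of yours, observing that $\Sigma$ acts on $X(\Sigma)$ by injective maps and that $\sigma\cdot\lcm(m,n)=\lcm(\sigma\cdot m,\sigma\cdot n)$, with the preservation of leading terms under a $\Sigma$-ordering used implicitly. Your write-up simply makes explicit the bookkeeping (leading coefficients, monomial quotients) that the paper leaves to the reader.
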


\begin{proof}
Since $\Sigma$ acts on the variables set $X(\Sigma)$ by injective maps,
it is sufficient to note that $\sigma\cdot \lcm(m,n) =
\lcm(\sigma\cdot m,\sigma\cdot n)$ for all $m,n\in M$ and $\sigma\in\Sigma$.
\end{proof}

The following definition is a standard tool in \Gr\ bases theory.

\begin{definition}
Let $f\in P,f\neq 0$ and $G\subset P$. If $f = \sum_i f_i g_i$ with
$f_i\in P,g_i\in G$ and $\lm(f)\succeq\lm(f_i)\lm(g_i)$ for all $i$,
we say that {\em $f$ has a \Gr\ representation respect to $G$}.
\end{definition}
 
Note that if $f = \sum_i f_i g_i$ is a \Gr\ representation then
$\sigma\cdot f = \sum_i (\sigma\cdot f_i)(\sigma\cdot g_i)$
is also a \Gr\ representation, for any $\sigma\in\Sigma$.
In fact, since $\prec$ is a $\Sigma$-ordering of $P$ one has
that $\lm(f)\succeq\lm(f_i)\lm(g_i)$ implies that
$\lm(\sigma\cdot f) = \sigma\cdot \lm(f)\succeq
(\sigma\cdot \lm(f_i))(\sigma\cdot \lm(g_i)) = \lm(\sigma\cdot f_i)
\lm(\sigma\cdot g_i)$ for all $i$.
A celebrated result from Bruno Buchberger \cite{Bu} is the following.

\begin{proposition}[Buchberger's criterion]
\label{buchcrit}
Let $G$ be a basis of the ideal $I\subset P$. Then, $G$ is a \Gr\ basis
of $I$ if and only if for all $f,g\in G,f,g\neq 0$ the S-polynomial
$\spoly(f,g)$ has a \Gr\ representation with respect to $G$.
\end{proposition}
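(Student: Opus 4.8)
The plan is to reduce the statement for the infinitely generated $\Sigma$-algebra $P$ to the classical Buchberger criterion applied to the (possibly infinite) generating set $\Sigma\cdot G$ of $I$ viewed as an ordinary ideal of $P$. By the definition of \Gr\ $\Sigma$-basis, $G$ is a \Gr\ $\Sigma$-basis of $I$ precisely when $\Sigma\cdot G$ is a \Gr\ basis of $I$ in the usual sense. Since $P$ is a polynomial ring and $\prec$ is a genuine monomial ordering on it (by the Higman's Lemma argument recorded earlier), the classical Buchberger criterion is available: $\Sigma\cdot G$ is a \Gr\ basis of $I$ if and only if $\spoly(p,q)$ has a \Gr\ representation with respect to $\Sigma\cdot G$ for every pair $p,q\in\Sigma\cdot G$. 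So the content to prove is that this infinite family of S-polynomial conditions follows from the finitely-checkable (modulo orbits) family $\{\spoly(f,g) : f,g\in G\}$.

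First I would fix $p = \sigma\cdot f$ and $q = \tau\cdot g$ with $f,g\in G$ and $\sigma,\tau\in\Sigma$, and observe that because $\Sigma\cong\N^r$ is a cancellative (indeed free commutative) monoid we may write $\sigma = \mu\sigma'$, $\tau = \mu\tau'$ where $\mu = \gcd(\sigma,\tau)$ in the obvious componentwise sense; then $p = \mu\cdot(\sigma'\cdot f)$ and $q = \mu\cdot(\tau'\cdot g)$. Using Proposition \ref{sigmaspoly} twice, $\spoly(p,q) = \mu\cdot\spoly(\sigma'\cdot f,\tau'\cdot g)$. Next I would note that $\spoly(\sigma'\cdot f,\tau'\cdot g)$ itself has a \Gr\ representation with respect to $\Sigma\cdot G$: this is the place where the hypothesis is used, but not directly — the hypothesis is only about $\spoly(f,g)$ for $f,g\in G$, i.e. about the case $\sigma'=\tau'=1$. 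To bridge the gap I would invoke the standard lifting/"product criterion plus chain" argument from Buchberger theory that reduces an arbitrary S-polynomial among elements of a set $H$ to the S-polynomials among the original generators together with S-polynomials that reduce to zero; concretely, one shows by the usual cancellation-of-leading-terms telescoping (as in the proof that it suffices to check S-polynomials of a generating set) that $\spoly(\sigma'\cdot f,\tau'\cdot g)$ lies in the ideal generated by $\Sigma\cdot G$ and rewrites with non-increasing leading monomials using the known \Gr\ representations of $\spoly(f,g)$ translated by elements of $\Sigma$. Finally, applying $\mu\cdot(-)$ to such a representation yields a \Gr\ representation of $\spoly(p,q)$, because a $\Sigma$-ordering sends \Gr\ representations to \Gr\ representations (as remarked just before the statement); then the classical Buchberger criterion for $\Sigma\cdot G$ closes the argument.

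The hard part will be making the middle step fully rigorous: the passage from "$\spoly(f,g)$ has a \Gr\ representation w.r.t.\ $G$ for $f,g\in G$" to "$\spoly(\sigma'\cdot f,\tau'\cdot g)$ has a \Gr\ representation w.r.t.\ $\Sigma\cdot G$". One cannot simply translate the representation of $\spoly(f,g)$ by $\sigma'$ or $\tau'$, since $\sigma'\cdot\spoly(f,g)\neq\spoly(\sigma'\cdot f,\tau'\cdot g)$ in general when $\sigma'\neq\tau'$. The clean way is to argue in two stages. Stage one: whenever $\lm(\sigma'\cdot f)$ and $\lm(\tau'\cdot g)$ are \emph{not} coprime, observe that there is a monomial $l'$ and elements $\alpha,\beta\in\Sigma$ together with a common refinement so that $\spoly(\sigma'\cdot f,\tau'\cdot g)$ can be written, by the telescoping identity underlying Buchberger's "it suffices to check a generating set" lemma, as a $P$-linear combination of translates $\nu\cdot\spoly(f,g)$ (which reduce by hypothesis) and of "overlap" terms that vanish by the product criterion — here one uses $\lcm(\sigma'\cdot m,\tau'\cdot n)$ computations in the free commutative monoid, exactly as in Proposition \ref{sigmaspoly}. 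Stage two: whenever the two leading monomials are coprime, $\spoly(\sigma'\cdot f,\tau'\cdot g)$ reduces to zero by the classical Buchberger product criterion (Proposition \ref{buchcrit} is the non-coprime direction; the coprime case is the standard lemma). I expect the bookkeeping in stage one — tracking which translates $\nu$ appear and checking the leading-monomial inequalities survive multiplication by $\mu$ — to be the only genuinely delicate point; everything else is a faithful transcription of the classical proof, justified termwise by Propositions \ref{sigmaspoly}, \ref{buchcrit}, and the $\Sigma$-compatibility of \Gr\ representations noted above.
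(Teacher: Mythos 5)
There is a genuine problem, and it starts with a misreading of the statement. Proposition \ref{buchcrit} is the \emph{classical} Buchberger criterion: $G$ is an ordinary basis of an ordinary ideal $I\subset P$, and the claim characterizes when $G$ is an ordinary \Gr\ basis. No $\Sigma$-action appears in the hypotheses or the conclusion; the only nonstandard feature is that $P$ has countably many variables and $G$ may be infinite. The paper does not reprove it: it cites \cite{Ei} and \cite{Be} and observes that the usual proof never uses Noetherianity, only the existence of a monomial well-ordering on $M$ (guaranteed here by Higman's Lemma). Your proposal instead treats $G$ as a $\Sigma$-basis, replaces the goal by ``$\Sigma\cdot G$ is a \Gr\ basis,'' and then declares ``the classical Buchberger criterion is available'' for $\Sigma\cdot G$ --- but that availability is exactly what Proposition \ref{buchcrit} asserts, so the argument is circular. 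What you are actually sketching is a proof of the $\Sigma$-criterion (Proposition \ref{sigmacrit}), which is a different, later result whose proof in the paper \emph{uses} \ref{buchcrit} as an ingredient.

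Even read as an attempt at the $\Sigma$-criterion, the middle step you flag as ``the hard part'' is not merely delicate --- it is false. One cannot derive a \Gr\ representation of $\spoly(\sigma'\cdot f,\tau'\cdot g)$ for coprime $\sigma',\tau'$ from the representability of $\spoly(f,g)$ alone: the paper's own example in Section 5 has $\spoly((0,1)\cdot g_1,\,g_1)$ reducing to a genuinely new element $g_3$, while $\spoly(g_1,g_1)=0$ trivially. This is precisely why the hypothesis of Proposition \ref{sigmacrit} quantifies over \emph{all} pairs $\sigma,\tau$ with $\gcd(\sigma,\tau)=1$, and why the algorithm \SigmaGBasis\ must loop over such pairs. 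A correct treatment of the statement at hand is either the citation the paper gives, or a transcription of the standard argument: the ``only if'' direction is division by a \Gr\ basis; for ``if,'' take $f\in I$, choose among its representations $f=\sum_i f_ig_i$ one minimizing $\max_i\lm(f_i)\lm(g_i)$ (possible because $\prec$ is a well-ordering, even with infinitely many variables), and if that maximum exceeds $\lm(f)$ use the hypothesized S-polynomial representations to rewrite and strictly decrease it --- a contradiction that uses well-ordering but never Noetherianity or finiteness of $G$.
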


Usually the above result, see for instance \cite{Ei}, is stated when $P$
is a polynomial algebra with a finite number of variables and $G$ is a
finite set. In fact, such assumptions are not needed since Noetherianity
is not used in the proof, but only the existence of a monomial ordering
for $P$. See also the comprehensive Bergman's paper \cite{Be} where
the ``Diamond Lemma'' is proved without any restriction on the finiteness
of the variables set. We want now to prove a generalization of the
Buchberger's criterion for \Gr\ $\Sigma$-bases of $P$. For this purpose
it is useful to introduce the following notations.

\begin{definition}
Let $\sigma = \prod_i \sigma_i^{\alpha_i}, \tau = \prod_i \sigma_i^{\beta_i}
\in \Sigma$. We denote $\gcd(\sigma,\tau) = \prod_i \sigma_i^{\gamma_i}$
where $\gamma_i = \min(\alpha_i,\beta_i)$, for any $i$.
\end{definition}

\begin{proposition}[$\Sigma$-criterion]
\label{sigmacrit}
Let $G$ be a $\Sigma$-basis of a $\Sigma$-ideal $I\subset P$.
Then, $G$ is a \Gr\ $\Sigma$-basis of $I$ if and only if for all
$f,g\in G,f,g\neq 0$ and for any $\sigma,\tau\in\Sigma$ such that
$\gcd(\sigma,\tau) = 1$, the S-polynomial
$\spoly(\sigma\cdot f, \tau\cdot g)$ has a \Gr\ representation
with respect to $\Sigma\cdot G$.
\end{proposition}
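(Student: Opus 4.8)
The plan is to reduce the $\Sigma$-criterion to the ordinary Buchberger criterion (Proposition~\ref{buchcrit}) applied to the basis $\Sigma\cdot G$ of $I$ as a $P$-ideal. By that criterion, $G$ is a \Gr\ $\Sigma$-basis of $I$ if and only if $\spoly(\sigma\cdot f,\tau\cdot g)$ has a \Gr\ representation with respect to $\Sigma\cdot G$ for \emph{all} $f,g\in G$ and \emph{all} $\sigma,\tau\in\Sigma$. So the content of the statement is that it suffices to check only those pairs with $\gcd(\sigma,\tau)=1$. One direction is trivial: if $G$ is a \Gr\ $\Sigma$-basis then every S-polynomial of elements of $\Sigma\cdot G$, in particular those with coprime weights, has a \Gr\ representation. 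The work is in the converse.

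First I would set up the following reduction. Take arbitrary $\sigma,\tau\in\Sigma$ and $f,g\in G$, and let $\delta=\gcd(\sigma,\tau)$, so that $\sigma=\delta\sigma'$, $\tau=\delta\tau'$ with $\gcd(\sigma',\tau')=1$. Writing $\lt(f)=cm$, $\lt(g)=dn$, I would observe that $\lt(\sigma\cdot f)=c\,(\sigma\cdot m)$ and $\lt(\tau\cdot g)=d\,(\tau\cdot n)$ since $\prec$ is a $\Sigma$-ordering (so leading monomials are preserved under the injective action, as already noted after Proposition~\ref{sigmaspoly}). The key combinatorial fact is that $\lcm(\sigma\cdot m,\tau\cdot n)=\delta\cdot\lcm(\sigma'\cdot m,\tau'\cdot n)$; this follows exponent-by-exponent from $\gcd(\sigma',\tau')=1$ together with the fact that $\sigma\cdot$ and $\tau\cdot$ act on the variable weights by adding the exponent vectors of $\sigma$ and $\tau$. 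From this and Proposition~\ref{sigmaspoly} one gets $\spoly(\sigma\cdot f,\tau\cdot g)=\delta\cdot\spoly(\sigma'\cdot f,\tau'\cdot g)$.

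Next I would use the hypothesis: $\spoly(\sigma'\cdot f,\tau'\cdot g)$ has a \Gr\ representation $\sum_i p_i\,h_i$ with $h_i\in\Sigma\cdot G$ and $\lm(\spoly(\sigma'\cdot f,\tau'\cdot g))\succeq\lm(p_i)\lm(h_i)$ for all $i$. Applying $\delta\cdot$ and invoking the remark (again after Proposition~\ref{sigmaspoly}) that a \Gr\ representation stays a \Gr\ representation under the $\Sigma$-action, I get $\delta\cdot\spoly(\sigma'\cdot f,\tau'\cdot g)=\sum_i(\delta\cdot p_i)(\delta\cdot h_i)$ with $\delta\cdot h_i\in\Sigma\cdot(\Sigma\cdot G)=\Sigma\cdot G$ and the leading-monomial inequalities preserved because $\prec$ is a $\Sigma$-ordering. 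Hence $\spoly(\sigma\cdot f,\tau\cdot g)$ has a \Gr\ representation with respect to $\Sigma\cdot G$. Since $\sigma,\tau,f,g$ were arbitrary, Proposition~\ref{buchcrit} applied to $\Sigma\cdot G$ yields that $\Sigma\cdot G$ is a \Gr\ basis of $I$, i.e.\ $G$ is a \Gr\ $\Sigma$-basis.

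I expect the main (and really only nontrivial) obstacle to be the verification of the identity $\lcm(\sigma\cdot m,\tau\cdot n)=\delta\cdot\lcm(\sigma'\cdot m,\tau'\cdot n)$, and the care needed in tracking which variables $x_i(\cdot)$ actually appear in $m$ and $n$; one must check the exponent computation on each variable $x_i(\rho)$ separately, using that the weight of $\sigma\cdot x_i(\rho)$ is $\sigma\rho$ and that $\gcd(\sigma',\tau')=1$ forces the overlap of the shifted supports to be governed exactly by $\delta$. Everything else is bookkeeping: repeated appeals to the fact that $\prec$ is a $\Sigma$-ordering so that leading terms, S-polynomials (Proposition~\ref{sigmaspoly}), and \Gr\ representations all commute with the $\Sigma$-action, plus the idempotence $\Sigma\cdot(\Sigma\cdot G)=\Sigma\cdot G$ coming from $\Sigma$ being a monoid.
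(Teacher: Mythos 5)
Your proposal is correct and follows essentially the same route as the paper: decompose $\sigma=\delta\sigma'$, $\tau=\delta\tau'$ with $\delta=\gcd(\sigma,\tau)$, use Proposition~\ref{sigmaspoly} to write $\spoly(\sigma\cdot f,\tau\cdot g)=\delta\cdot\spoly(\sigma'\cdot f,\tau'\cdot g)$, and push the hypothesized \Gr\ representation forward by $\delta$ using that $\prec$ is a $\Sigma$-ordering. The only difference is that the lcm identity you flag as the main obstacle is not needed as a separate combinatorial verification: Proposition~\ref{sigmaspoly} applied with the shift $\delta$ to the pair $(\sigma'\cdot f,\tau'\cdot g)$ already yields the required factorization, with no use of the coprimality of $\sigma'$ and $\tau'$ at that step.
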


\begin{proof}
We prove that $\Sigma\cdot G$ is a \Gr\ basis of $I$ and we make use
of the Proposition \ref{buchcrit}. Then, consider any pair of elements
$\sigma\cdot f, \tau\cdot g\in \Sigma\cdot G$ where $f,g\in G,f,g\neq 0$ and
$\sigma,\tau\in\Sigma$. Put $\delta = \gcd(\sigma,\tau)$ and hence
$\sigma = \delta \sigma', \tau = \delta \tau'$ with $\sigma',\tau'\in\Sigma,
\gcd(\sigma',\tau') = 1$. By Proposition \ref{sigmaspoly} we have
$\spoly(\sigma\cdot f, \tau\cdot g) =
\delta\cdot \spoly(\sigma'\cdot f, \tau'\cdot g)$. By hypothesis, assume that
$\spoly(\sigma'\cdot f, \tau'\cdot g) = h =
\sum_\nu f_\nu (\nu\cdot g_\nu)$, with $\nu\in\Sigma,f_\nu\in P,g_\nu\in G$,
is a \Gr\ representation with respect to $\Sigma\cdot G$.
Since $\prec$ is a $\Sigma$-ordering of $P$, we conclude that we have also
the \Gr\ representation $\spoly(\sigma\cdot f, \tau\cdot g) =
\delta\cdot h = \sum_\nu (\delta\cdot f_\nu) (\delta\nu\cdot g_\nu)$.
\end{proof}

For the purpose of obtaining an effective Buchberger's algorithm from
the above criterion, note that all usual criteria (product criterion,
chain criterion, etc) can be used also in such procedure.
In particular, the arguments contained in the proof of Proposition
\ref{finsigmacrit} (see comments after this proof) imply that for
any pair of elements $f,g\in G$ and for all $\sigma,\tau\in\Sigma$
there are only a finite number of S-polynomials
$\spoly(\sigma\cdot f,\tau\cdot g)$ satisfying both the criteria
$\gcd(\sigma,\tau) = 1$ and $\gcd(\sigma\cdot\lm(f),\tau\cdot\lm(g))\neq 1$.

A standard subroutine in the Buchberger's algorithm is the following.

\suppressfloats[b]
\begin{algorithm}\caption{\Reduce}
\begin{algorithmic}[0]
\State \text{Input:} $G\subset P$ and $f\in P$.
\State \text{Output:} $h\in P$ such that $f - h\in\langle G\rangle$
and $h = 0$ or $\lm(h)\notin\LM(G)$.
\State $h:= f$;
\While{ $h\neq 0$ and $\lm(h)\in\LM(G)$ }
\State choose $g\in G,g\neq 0$ such that $\lm(g)$ divides $\lm(h)$;
\State $h:= h - (\lt(h)/\lt(g)) g$;
\EndWhile;
\State \Return $h$.
\end{algorithmic}
\end{algorithm}

\newpage
Note that the termination of $\Reduce$ is provided since $\prec$
is a monomial ordering of $P$. In particular, even if $G$ is an infinite set,
there are only a finite number of elements $g\in G,g\neq 0$ such that
$\lm(g)$ divides $\lm(h)$ and hence $\lm(g)\preceq\lm(h)$.
It is well-known that if $\Reduce(f,G) = 0$ then $f$ has a \Gr\ representation
with respect to $G$. Moreover, if $\Reduce(f,G) = h\neq 0$ then clearly
one has $\Reduce(f,G\cup\{h\}) = 0$. Therefore, from Proposition \ref{sigmacrit}
and product criterion it follows immediately the correctness of the following
algorithm.

\suppressfloats[b]
\begin{algorithm}\caption{SigmaGBasis}
\begin{algorithmic}[0]
\State \text{Input:} $H$, a $\Sigma$-basis of a $\Sigma$-ideal
$I\subset P$.
\State \text{Output:} $G$, a \Gr\ $\Sigma$-basis of $I$.
\State $G:= H$;
\State $B:= \{(f,g) \mid f,g\in G\}$;
\While{$B\neq\emptyset$}
\State choose $(f,g)\in B$;
\State $B:= B\setminus \{(f,g)\}$;
\ForAll{$\sigma,\tau\in\Sigma$ such that $\gcd(\sigma,\tau) = 1,
\gcd(\sigma\cdot\lm(f),\tau\cdot\lm(g))\neq 1$}
\State $h:= \Reduce(\spoly(\sigma\cdot f,\tau\cdot g), \Sigma\cdot G)$;
\If{$h\neq 0$}
\State $B:= B\cup\{(g,h),(h,h) \mid g\in G\}$;
\State $G:= G\cup\{h\}$;
\EndIf;
\EndFor;
\EndWhile;
\State \Return $G$.
\end{algorithmic}
\end{algorithm}

\newpage
Note that the above algorithm can be viewed as a variant of the usual
Buchberger's procedure applied for the basis $\Sigma\cdot H$, where an
additional criterion to avoid ``useless pairs'' is given by Proposition
\ref{sigmacrit}. Unfortunately, owing to Non-Noetherianity of the ring $P$,
the termination of \SigmaGBasis\ is not provided in general and this is,
in fact, one of the main problems in differential/difference algebra.
Precisely, even if a $\Sigma$-ideal $I\subset P$ has a finite
$\Sigma$-basis this may be not the case for the initial $\Sigma$-ideal
$\LM(I)$ that is all \Gr\ $\Sigma$-bases of $I$ are infinite sets.
Despite this bad general case, in Section 6 we introduce suitable
gradings for the algebra $P$ which provides that truncated versions
of the algorithm \SigmaGBasis\ with homogeneous input stops
in a finite number of steps. Note finally that some variant of \SigmaGBasis\
appeared in \cite{Ge} and before in \cite{LSL,LSL2} for the ordinary
difference case. In fact, the notion of ``letterplace correspondence/embedding''
introduced in these latter papers strictly relates non-commutative \Gr\ bases
to \Gr\ $\Sigma$-bases of ordinary difference ideals (see also \cite{LS}).


\section{An illustrative example}

In this section we apply the algorithm \SigmaGBasis\ to a simple example
in order to provide a concrete computation with it. Let $X = \{x,y\}, \Sigma =
\langle \sigma_1,\sigma_2 \rangle$ and consider the algebra of partial
difference polynomials $P = K[X(\Sigma)]$. To simplify the notation,
we identify the monoid $(\Sigma,\cdot)$ with $(\N^2,+)$ by means of the
isomorphism $\sigma_1^i\sigma_2^j \mapsto (i,j)$. Then, we denote the variables
of $P$ as $x(i,j),y(i,j)$, for all $i,j\geq 0$. We consider now
the $\Sigma$-ideal (difference ideal) $I\subset P$ that is $\Sigma$-generated
by the difference polynomials
\[
\begin{array}{l}
g_1 = y(1,1)y(1,0) - 2x(0,1)^2, \\
g_2 = y(2,0) + x(0,0)x(1,0).
\end{array}
\]
In other words, this $\Sigma$-basis (difference basis) encodes a system
of non-linear difference equations with constant coefficients
in two unknown bivariate functions.
By symbolic (formal) computations, we want to substitute this system
with a completion of it, namely a \Gr\ $\Sigma$-basis. We may want to do
this for the purposes of checking membership of other equations to the
$\Sigma$-ideal, elimination of unknowns, etc. The main problem is that such basis
may be infinite, but it is not the case for this example. We fix then
the {\em degrevlex} ordering on $\Sigma$ with $\sigma_1 > \sigma_2$ that is
on $\N^2$ where $(1,0) > (0,1)$. Moreover, we consider the {\em lex} monomial
ordering on $K[x(0,0),y(0,0)]$ with $x(0,0)\succ y(0,0)$. A weight $\Sigma$-ordering
(Definition \ref{weightmord}) is hence defined on $P =
\bigotimes_{(i,j)\in\N^2} K[x(i,j),y(i,j)]$ as a block monomial ordering.
In practice, it is the lexicographic monomial ordering based on the following
weight $\Sigma$-ranking
\[
\begin{array}{l}
\ldots \succ x(2, 0)\succ y(2, 0)\succ x(1, 1)\succ y(1, 1)\succ
x(0, 2)\succ y(0, 2)\succ x(1, 0)\succ \\
y(1, 0)\succ x(0, 1)\succ y(0, 1)\succ x(0, 0)\succ y(0, 0).
\end{array}
\]
We use this monomial $\Sigma$-ordering of $P$ for computing a \Gr\ $\Sigma$-basis
of $I$. Such basis consists of the elements $g_1,g_2$ together with
the difference polynomials
\[
\begin{array}{l}
g_3 = y(1,2)x(0,1)^2 - y(1,0)x(0,2)^2, \\
g_4 = 2x(1,1)^2 - x(0,0)x(1,0)x(0,1)x(1,1).
\end{array}
\]
Note that in all these elements the first monomial is the leading one
with respect to the given ordering of $P$. Let us see how the algorithm
\SigmaGBasis\ is able to obtain such \Gr\ $\Sigma$-basis.
Since the $\Sigma$-ideal $I$ is $\Sigma$-generated by $G = \{g_1,g_2\}$
then $I$ is generated as an ideal of $P$ by $\Sigma\cdot G$ that are
the polynomials
\[
\begin{array}{l}
(i,j)\cdot g_1 = y(i+1,j+1)y(i+1,j) - 2x(i,j+1)^2, \\
(i,j)\cdot g_2 = y(i+2,j) + x(i,j)x(i+1,j), \\
\end{array}
\]
for all $i,j\geq 0$. By applying the product criterion, we have to consider
three kinds of S-polynomials
\[
\begin{array}{l}
\spoly((i,j+1)\cdot g_1,(i,j)\cdot g_1) =
(i,j)\cdot \spoly((0,1)\cdot g_1, g_1), \\

\spoly((i+1,j)\cdot g_1, (i,j)\cdot g_2) =
(i,j)\cdot \spoly((1,0)\cdot g_1, g_2), \\

\spoly((i+1,j+1)\cdot g_1, (i,j+2)\cdot g_2) =
(i,j+1)\cdot \spoly((1,0)\cdot g_1, (0,1)\cdot g_2). \\
\end{array}
\]
The $\Sigma$-criterion implies therefore that one has to reduce
with respect to the basis $\Sigma\cdot G$ just the S-polynomials
\[
\begin{array}{l}
s_1 = \spoly((0,1)\cdot g_1, g_1), \\
s_2 = \spoly((1,0)\cdot g_1, g_2), \\
s_3 = \spoly((1,0)\cdot g_1, (0,1)\cdot g_2).
\end{array}
\]
The reduction of the S-polynomial $s_1$ leads to the new element $g_3$
and the current $\Sigma$-basis of $I$ is now $G = \{g_1,g_2,g_3\}$.
The additional S-polynomials that survive to product and $\Sigma$-criterion
are
\[
\begin{array}{l}
s_4 = \spoly((0,1)\cdot g_1, g_3), \\
s_5 = \spoly((0,2)\cdot g_1, g_3), \\
s_6 = \spoly((0,2)\cdot g_2, (1,0)\cdot g_3).
\end{array}
\]
We have that $s_4\to 0$ and $s_2\to g_4$ with respect to $\Sigma\cdot G$.
The $\Sigma$-basis is then $G = \{g_1,g_2,g_3,g_4\}$ and one has a new S-polynomial
\[
s_7 = \spoly((1,0)\cdot g_3, g_4).
\]
Finally, we have that all S-polynomials $s_3,s_5,s_6,s_7$ reduce to zero
with respect to $\Sigma\cdot G$ and hence $G$ is a \Gr\ $\Sigma$-basis of $I$.
Note that $G$ is in fact a minimal such basis and also that in this simple example
there is no use of the chain criterion that can be always applied together
with the other criteria.


\section{Gradings of $P$ compatible with $\Sigma$-action}

We want now to introduce some gradings of the algebra $P = K[X(\Sigma)]$
which are compatible with $\Sigma$-action and formation of least common
multiples in $M = \Mon(P)$. As before, we fix a monomial order $<$ of
$\Sigma$. We start extending the structure $(\Sigma,\max,\cdot)$ in the
following way.

\begin{definition}
Let $0$ be an element disjoint with $\Sigma$ and put $\hSigma =
\Sigma\cup \{0\}$. Then, we define a commutative idempotent monoid
$(\hSigma,+)$ with identity $0$ that extends the monoid $(\Sigma,\max)$
(with identity 1) by imposing that $0 + \sigma = \sigma$, for any
$\sigma\in\hSigma$. Moreover, we define a commutative monoid $(\hSigma,\cdot)$
with identity 1 extending the monoid $(\Sigma,\cdot)$ by putting
$0\cdot\sigma = 0$, for all $\sigma\in\hSigma$. Since multiplication clearly
distributes over addition, one has that $(\hSigma,+,\cdot)$ is a commutative
idempotent semiring, also known as commutative dioid \cite{GM}.
\end{definition}

Note that the faithful monoid representation $\rho:\Sigma\to\End_K(P)$
can be extended to $\hSigma$ where $\rho(0):P\to P$ is the algebra
endomorphism such that $x_i(\sigma)\mapsto 0$, for all $x_i(\sigma)\in X(\Sigma)$.

\begin{definition}
Let $\w:M\to\hSigma$ be the unique mapping such that
\begin{itemize}
\item[(i)] $\w(1) = 0$;
\item[(ii)] $\w(m n) = \w(m) + \w(n)$, for any $m,n\in M$;
\item[(iii)] $\w(x_i(\sigma)) = \sigma$, for all $i\geq 0$ and $\sigma\in\Sigma$.
\end{itemize}
Note that (i),(ii) state that $\w$ is a monoid homomorphism from
the free commutative monoid $(M,\cdot)$ to $(\hSigma,+)$. We call $\w$
the {\em weight function} of $P$.
\end{definition}

More explicitely, if
$m = x_{i_1}(\delta_1)^{\alpha_1}\cdots x_{i_k}(\delta_k)^{\alpha_k}$
is any monomial of $P$ different from 1 then $\w(m) =
\delta_1 + \cdots + \delta_k = \max_<(\delta_1,\ldots,\delta_k)$. We denote
$M_\sigma = \{m\in M\mid \w(m) = \sigma\}$ and define $P_\sigma\subset P$
the subspace spanned by $M_\sigma$, for any $\sigma\in\hSigma$.
Because $\w:(M,\cdot)\to (\hSigma,+)$ is a monoid homomorphism one has that
$P = \bigoplus_{\sigma\in\hSigma} P_\sigma$ is a grading of the algebra $P$
over the commutative monoid $(\hSigma,+)$. If $f\in P_\sigma$ we say that
$f$ is a {\em $\w$-homogeneous element} and we put $\w(f) = \sigma$.
Recall that for any $\sigma\in\Sigma$ we denoted $P(\sigma) = K[X(\sigma)]$
which is a subalgebra of $P = K[X(\Sigma)]$ isomorphic to $K[X]$. If we put
$P(0) = P_0 = K$ then one has that $P^{(\sigma)} =
\bigoplus_{\tau\leq\sigma} P_\sigma = \bigotimes_{\tau\leq\sigma} P(\tau)$
is a subalgebra of $P$. In particular, we have that $P^{(1)} = P_0 \oplus P_1
= P(0) \otimes P(1) = P(1)$ is isomorphic to the polynomial algebra $K[X]$.

\begin{definition}
A monomial order $<$ of $\Sigma$ is said to be {\em sequential} if
$\{\tau\in\Sigma\mid\tau\leq\sigma\}$ is a finite set, for all
$\sigma\in\Sigma$.
\end{definition}

It is important to note that if $X$ is a finite set and $<$ is a sequential
ordering of $\Sigma$ then the sequence $\{P^{(\sigma)}\mid \sigma\in\hSigma\}$
is a filtration of $P$ consisting of Noetherian subalgebras. For such reason,
from now on {\em we assume} $\Sigma$ be endowed with a sequential monomial
ordering.

\begin{proposition}
\label{wgood}
The weight function satisfies the following properties:
\begin{itemize}
\item[(i)] $\w(\sigma\cdot m) = \sigma \w(m)$, for any $\sigma\in\Sigma$
and $m\in M$;
\item[(ii)] $\w(\lcm(m,n)) = \w(m n) = \w(m) + \w(n)$, for all $m,n\in M$.
Then, $m\mid n$ implies that $\w(m)\leq \w(n)$.
\end{itemize}
\end{proposition}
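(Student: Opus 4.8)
The plan is to verify both claims directly from the defining properties of the weight function $\w$ together with the explicit description $\w(m) = \max_<(\delta_1,\ldots,\delta_k)$ for $m = x_{i_1}(\delta_1)^{\alpha_1}\cdots x_{i_k}(\delta_k)^{\alpha_k}$, and from the fact that $<$ is a monomial ordering of $\Sigma$ (so that multiplication by a fixed $\sigma$ is order-preserving on $\Sigma$).

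For part (i), first I would note that it suffices to check the identity $\w(\sigma\cdot x_i(\tau)) = \sigma\,\w(x_i(\tau))$ on the variables, since both sides are monoid homomorphisms from $(M,\cdot)$: the left side because $\rho(\sigma)$ is a $K$-algebra endomorphism mapping monomials to monomials and $\w$ is multiplicative, the right side because $\w$ is multiplicative and multiplication by $\sigma$ in $(\hSigma,\cdot)$ distributes over the addition $+ = \max$ (this is exactly the dioid distributivity recorded in the definition of $\hSigma$). On a variable we compute $\w(\sigma\cdot x_i(\tau)) = \w(x_i(\sigma\tau)) = \sigma\tau = \sigma\,\w(x_i(\tau))$, which closes this case. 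One should also remark that for $m = 1$ both sides are $0$, consistently with $\sigma\cdot 0 = 0$ in $(\hSigma,\cdot)$.

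For part (ii), the equality $\w(mn) = \w(m)+\w(n)$ is just the multiplicativity of $\w$, so the content is the first equality $\w(\lcm(m,n)) = \w(mn)$. Writing $\w(m) = \max_< S$ and $\w(n) = \max_< T$ where $S,T\subset\Sigma$ are the (finite, nonempty unless the monomial is $1$) sets of weights of the variables occurring in $m$, $n$ respectively, the variables occurring in $\lcm(m,n)$ are exactly those occurring in $m$ or in $n$, so $\w(\lcm(m,n)) = \max_<(S\cup T) = \max(\max_< S,\max_< T) = \w(m)+\w(n)$; the degenerate cases where $m$ or $n$ equals $1$ are handled by $0+\sigma=\sigma$. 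Finally, if $m\mid n$ then $\lcm(m,n)=n$, so $\w(n) = \w(m)+\w(n) = \max(\w(m),\w(n)) \geq \w(m)$, giving the last assertion.

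The routine nature of both parts means there is no serious obstacle; the only point requiring a little care is bookkeeping around the element $0\in\hSigma$ and the empty-product monomial $1$, making sure the homomorphism property is invoked consistently at the base point. The single structural fact being used nontrivially is that $+$ on $\hSigma$ is $\max$ and that $<$ is a monomial ordering of $\Sigma$, so that $\sigma\cdot\max(\alpha,\beta) = \max(\sigma\alpha,\sigma\beta)$; everything else is formal manipulation with the monoid homomorphism $\w$.
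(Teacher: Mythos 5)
Your proof is correct and follows essentially the same route as the paper: both arguments rest on the explicit description $\w(m)=\max_<(\delta_1,\ldots,\delta_k)$, the fact that multiplication by $\sigma$ preserves $<$ on $\Sigma$ (equivalently, distributes over $+=\max$ in $\hSigma$), and the observation that the weight of a monomial depends only on which variables occur and not on their exponents. Your packaging of part (i) as an agreement of two monoid homomorphisms on generators is a slightly more formal rendering of the paper's direct computation, but it is not a different argument.
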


\begin{proof}
If $m = 1$ then $\w(\sigma\cdot m) = \w(m) = 0 = \sigma \w(m)$.
If otherwise $m = x_{i_1}(\delta_1)^{\alpha_1}\cdots x_{i_k}(\delta_k)^{\alpha_k}$
with $\delta_1 > \ldots > \delta_k$ then $\sigma\cdot m =
x_{i_1}(\sigma\delta_1)^{\alpha_1}\cdots x_{i_k}(\sigma\delta_k)^{\alpha_k}$
where $\sigma\delta_1 > \ldots > \sigma\delta_k$ since $<$ is a monomial
ordering of $\Sigma$. We conclude that $\w(\sigma\cdot m) = \sigma\delta_1 =
\sigma\w(m)$. To prove (ii) it is sufficient to note that the weight of a
monomial does not depend on the exponents of the variables occuring in it.
\end{proof}

Note that the property (i) implies that the map $\w$ is a homomorphism
with respect to the action of $\Sigma$ on $M$ and $\hSigma$. In other words,
one has that $\sigma P_\tau\subset P_{\sigma\tau}$ for any $\sigma\in\Sigma,
\tau\in\hSigma$. Moreover, the property (ii) means that $\w$ is also a monoid
homomorphism from $(M,\lcm)$ to $(\hSigma,+)$.

\begin{definition}
Let $I$ be an ideal of $P$. We call $I$ a {\em $\w$-graded ideal} if $I =
\sum_\sigma I_\sigma$ with $I_\sigma = I\cap P_\sigma$. In this case, if
$I$ is also a $\Sigma$-ideal then $\sigma\cdot I_\tau\subset I_{\sigma\tau}$
for all $\sigma\in\Sigma,\tau\in\hSigma$.
\end{definition}

Owing to the $\w$-grading of $P$, one can show that a truncated
version of the algorithm \SigmaGBasis\ admits termination.
If $f,g\in P,f\neq g$ are $\w$-homogeneous elements then the S-polynomial
$h = \spoly(f,g)$ is clearly $\w$-homogeneous too. Moreover, by property (ii)
of Proposition \ref{wgood}, we have that $\w(h) = \w(f) + \w(g)$
and hence if $\w(f),\w(g)\leq \delta$ then also $\w(h)\leq \delta$,
for some $\delta\in\Sigma$. By means of this remark, one obtains
immediately the following result.

\begin{proposition}[Truncated termination over the weight]
\label{wtermin}
Let $I\subset P$ be a $\w$-graded $\Sigma$-ideal and fix $\delta\in\Sigma$.
Assume $I$ has a $\w$-homogeneous basis $H$ such that $H_\delta =
\{f\in H\mid \w(f)\leq\delta\}$ is a finite set. Then, there is
a $\w$-homogeneous \Gr\ $\Sigma$-basis $G$ of $I$ such that $G_\delta$
is also a finite set. In other words, if we consider for the algorithm
\SigmaGBasis\ a selection strategy of the S-polynomials based on their
weights ordered by $<$, we obtain that the $\delta$-truncated
version of \SigmaGBasis\ stops in a finite number of steps.
\end{proposition}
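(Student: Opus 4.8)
The plan is to run the algorithm \SigmaGBasis\ with a selection strategy that always picks pending pairs $(f,g)$ whose associated S-polynomials have minimal weight with respect to $<$, and to show that the $\delta$-truncated version — in which we simply discard (or never form) any S-polynomial $\spoly(\sigma\cdot f,\tau\cdot g)$ of weight $\not\leq\delta$ — terminates. First I would observe, using property (ii) of Proposition \ref{wgood} together with Proposition \ref{sigmaspoly}, that for a $\w$-homogeneous pair $f,g$ with $\w(f),\w(g)\leq\delta$ the only S-polynomials $\spoly(\sigma\cdot f,\tau\cdot g)$ that can have weight $\leq\delta$ are those with $\w(\sigma\cdot f)=\sigma\w(f)\leq\delta$ and $\w(\tau\cdot g)=\tau\w(g)\leq\delta$; since $<$ is sequential, there are only finitely many such $\sigma,\tau$. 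Hence at each execution of the inner \textbf{ForAll} loop only finitely many S-polynomials are produced and reduced against $\Sigma\cdot G$, and each contributes at most one new generator $h$, which is $\w$-homogeneous of weight $\leq\delta$ by the remark preceding the statement.

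Next I would set up the termination argument proper. Let $G_\delta$ denote, at any moment of the run, the set of elements of the current basis $G$ of weight $\leq\delta$; all newly created elements land in $G_\delta$, so it suffices to bound $|G_\delta|$. Consider the ideal $J=\LM(\Sigma\cdot G\cap P^{(\delta)})$ inside the Noetherian subalgebra $P^{(\delta)}=\bigotimes_{\tau\leq\delta}P(\tau)$ — Noetherian because $X$ is finite and $<$ is sequential, so $P^{(\delta)}$ is a polynomial ring in finitely many variables. Each time a nonzero reduced S-polynomial $h$ is adjoined, $\lm(h)\notin\LM(\Sigma\cdot G)$ and $\w(h)\leq\delta$, so $\lm(h)\in P^{(\delta)}$ and $\lm(h)\notin J$; thus adjoining $h$ (together with the orbit $\Sigma\cdot h$ intersected with $P^{(\delta)}$) strictly enlarges $J$. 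By the ascending chain condition in $P^{(\delta)}$ this can happen only finitely often, so only finitely many new elements are ever created. It remains to check that, once no more new elements appear, the set of pending pairs is exhausted in finitely many steps: each pair $(f,g)$ with an S-polynomial of weight $\leq\delta$ spawns, as noted, only finitely many S-polynomials, and the number of pairs present is finite because $G_\delta$ is finite; pairs whose S-polynomials all have weight $\not\leq\delta$ are skipped outright by the truncation. Combining, the $\delta$-truncated run halts, and the set $G$ it returns has $G_\delta$ finite; that $G$ is a $\w$-homogeneous \Gr\ $\Sigma$-basis of $I$ in degrees $\leq\delta$ follows from the $\Sigma$-criterion (Proposition \ref{sigmacrit}) restricted to weights $\leq\delta$, since every relevant S-polynomial has indeed been reduced to zero modulo $\Sigma\cdot G$.

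I expect the main obstacle to be bookkeeping around the interaction of the monoid orbit with the truncation: one must be careful that ``reduce $\spoly(\sigma\cdot f,\tau\cdot g)$ modulo $\Sigma\cdot G$'' never needs generators of weight exceeding $\delta$ — this is where property (ii) of Proposition \ref{wgood} ($m\mid n\Rightarrow\w(m)\leq\w(n)$) is essential, since any $g'\in\Sigma\cdot G$ used in a reduction step of a weight-$\leq\delta$ polynomial has $\lm(g')$ dividing a monomial of weight $\leq\delta$, hence $\w(g')\leq\delta$ as well. Once that closure property is pinned down, the Noetherian-chain argument in $P^{(\delta)}$ goes through verbatim as in the classical finite case, and the only genuinely new ingredient is the finiteness of the relevant $(\sigma,\tau)$ supplied by sequentiality of $<$.
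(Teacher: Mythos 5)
Your strategy is essentially the paper's: reduce the $\delta$-truncated run to a Buchberger computation inside a Noetherian polynomial subring and invoke the ascending chain condition on leading-monomial ideals, with sequentiality of $<$ supplying the finiteness of the relevant shifts $\sigma,\tau$ and of $H'_\delta=\{\sigma\cdot f\mid \sigma\w(f)\leq\delta\}$. Your closure observation (via Proposition \ref{wgood}(ii), reductions of weight-$\leq\delta$ polynomials only ever call on orbit elements of weight $\leq\delta$) and the strictly increasing chain of ideals $J$ are a slightly more explicit rendering of what the paper compresses into ``the truncated algorithm computes a subset of a \Gr\ basis of $I_{(\delta)}$.''

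The one genuine issue is your choice of ambient Noetherian ring. You take $P^{(\delta)}=\bigotimes_{\tau\leq\delta}P(\tau)$ and justify its Noetherianity by assuming $X$ is finite; but the proposition does not assume this --- the paper allows $X$ countable, and the hypothesis is only that $H_\delta$ is finite. For countable $X$ the algebra $P^{(\delta)}$ is a polynomial ring in infinitely many variables and your chain argument breaks there. The repair is exactly the paper's move: since S-polynomial formation and reduction introduce no new variables, every element produced by the truncated run lies in $P_{(\delta)}=K[X_\delta]$, where $X_\delta$ is the finite set of variables occurring in the finite set $H'_\delta$; run your ascending-chain argument in that ring instead and the rest of your proof goes through unchanged.
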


\begin{proof}
First of all, note that the algorithm \SigmaGBasis\ computes essentially
a subset $G$ of a \Gr\ basis $\Sigma\cdot G$ obtained by applying the
Buchberger's algorithm to the basis $\Sigma\cdot H$ of $I$. Moreover,
by Proposition \ref{wgood} the elements of $\Sigma\cdot H$ and hence
of $\Sigma\cdot G$ are all $\w$-homogeneous.
Denote $H'_\delta = \{\sigma\cdot f\mid\sigma\in\Sigma,f\in H,
\sigma\w(f)\leq \delta\}$. Since $<$ is a sequential monomial order of $\Sigma$
and $H_\delta$ is a finite set one has that $H'_\delta$ is also
a finite set. We consider therefore $X_\delta$ the finite set of variables
of $P$ occurring in the elements of $H'_\delta$ and define
$P_{(\delta)} = K[X_\delta]\subset P$. In fact, the $\delta$-truncated
algorithm \SigmaGBasis\ computes a subset of a \Gr\ basis of the ideal
$I_{(\delta)} \subset P_{(\delta)}$ generated by $H'_\delta$.
By Noetherianity of the finitely generated polynomial ring $P_{(\delta)}$
we clearly obtain termination.
\end{proof}

Clearly the above result provides algorithmic solution to the ideal
membership problem for finitely generated $\w$-graded $\Sigma$-ideals.
Note that if $r = 0$ that is $\Sigma = \{1\}$ then the algorithm \SigmaGBasis\
coincides with classical Buchberger's algorithm and Proposition \ref{wtermin}
states that if $I$ is a finitely generated ideal of $P = P_0\oplus P_1 =
K[x_0,x_1,\ldots]$ then $I$ has also a finite \Gr\ basis. According with
the above proof, this is a consequence of the fact that the Buchberger's
algorithm runs over the finite number of variables occuring in the generators
of $I$.

Another useful grading of $P$ can be introduced in the following way.
Consider the set $\hN = \N\cup\{-\infty\}$ endowed with the binary operations
$\max$ and $+$. Then $(\hN,\max,+)$ is clearly a commutative idempotent
semiring (or commutative dioid or max-plus algebra).
Define $\deg:\hSigma\to\hN$ the mapping such that $\deg(0) = -\infty$
and $\deg(\sigma) = \sum_i \alpha_i$, for any $\sigma =
\prod_i \sigma_i^{\alpha_i}$. Clearly $\deg$ is a monoid homomorphism
from $(\hSigma,\cdot)$ to $(\hN,+)$.

\begin{definition}
Let $\ord:M\to\hN$ be the unique mapping such that
\begin{itemize}
\item[(i)] $\ord(1) = -\infty$;
\item[(ii)] $\ord(m n) = \max(\ord(m),\ord(n))$, for any $m,n\in M$;
\item[(iii)] $\ord(x_i(\sigma)) = \deg(\sigma)$, for all $i\geq 0$
and $\sigma\in\Sigma$.
\end{itemize}
Clearly (i),(ii) state that $\ord$ is a monoid homomorphism from
$(M,\cdot)$ to $(\hN,\max)$. We call $\ord$ the {\em order function} of $P$.
\end{definition}

For any monomial $m =
x_{i_1}(\delta_1)^{\alpha_1}\cdots x_{i_k}(\delta_k)^{\alpha_k}$ different
from 1 we have that $\ord(m) = \max(\deg(\delta_1),\ldots,\deg(\delta_k))$.
Clearly, the order function defines a grading $P = \bigoplus_{d\in\hN} P_d$
of the algebra $P$ over the commutative monoid $(\hN,\max)$. Define
$P^{(d)} = \bigoplus_{i\leq d} P_i = \bigotimes_{\deg(\sigma)\leq d} P(\sigma)$
which is a subalgebra of $P$. Then, if $X$ is a finite set we have that
the sequence $\{ P^{(d)}\mid d\in\hN \}$ is a filtration of $P$
with Noetherian subalgebras where $P^{(0)} = P_{-\infty}\oplus P_0$ is
isomorphic to $K[X]$. 

\begin{definition}
A monomial order $<$ of $\Sigma$ is said to be {\em compatible with $\deg$}
when $\deg(\sigma) < \deg(\tau)$ implies that $\sigma < \tau$,
for any $\sigma,\tau\in\Sigma$.
\end{definition}

If $<$ is compatible with $\deg$, note that $<$ is a sequential ordering
of $\Sigma$ and $\ord(m) = \deg(\w(m))$, for all $m\in M$. Finally, one has
that the weight and order functions clearly coincide when $r = 1$.

\begin{proposition}
\label{ordgood}
The order function satisfies the following:
\begin{itemize}
\item[(i)] $\ord(\sigma\cdot m) = \deg(\sigma) + \ord(m)$, for any
$\sigma\in\Sigma$ and $m\in M$;
\item[(ii)] $\ord(\lcm(m,n)) = \ord(m n) = \max(\ord(m),\ord(n))$,
for all $m,n\in M$. Therefore, if $m\mid n$ then
$\ord(m)\leq \ord(n)$.
\end{itemize}
\end{proposition}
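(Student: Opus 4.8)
The plan is to prove Proposition \ref{ordgood} by mirroring the proof of Proposition \ref{wgood}, transporting everything through the monoid homomorphism $\deg:(\hSigma,\cdot)\to(\hN,+)$ (which is also a monoid homomorphism $(\hSigma,+)\to(\hN,\max)$). The key structural input is that $\ord = \deg\circ\w$ holds on $M$ whenever $<$ is compatible with $\deg$, but since the statement of Proposition \ref{ordgood} does not seem to require that compatibility, I would instead argue directly from the defining properties (i)--(iii) of $\ord$ and the explicit formula $\ord(m) = \max(\deg(\delta_1),\ldots,\deg(\delta_k))$ for $m = x_{i_1}(\delta_1)^{\alpha_1}\cdots x_{i_k}(\delta_k)^{\alpha_k}$.

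For part (i), I would first dispose of the trivial case $m = 1$: then $\sigma\cdot m = 1$, so $\ord(\sigma\cdot m) = -\infty = \deg(\sigma) + (-\infty) = \deg(\sigma) + \ord(m)$, using the convention that $+\infty$ addition in $\hN$ sends anything plus $-\infty$ to $-\infty$. For $m\neq 1$ written as above, $\sigma\cdot m = x_{i_1}(\sigma\delta_1)^{\alpha_1}\cdots x_{i_k}(\sigma\delta_k)^{\alpha_k}$, so
\[
\ord(\sigma\cdot m) = \max_p \deg(\sigma\delta_p) = \max_p\bigl(\deg(\sigma) + \deg(\delta_p)\bigr) = \deg(\sigma) + \max_p \deg(\delta_p) = \deg(\sigma) + \ord(m),
\]
where the middle equalities use that $\deg$ is a monoid homomorphism from $(\hSigma,\cdot)$ to $(\hN,+)$ and that addition by the fixed quantity $\deg(\sigma)$ commutes with $\max$ in $\hN$.

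For part (ii), the point is again that $\ord(m)$ depends only on which variables occur in $m$, not on their exponents, exactly as in the proof of Proposition \ref{wgood}(ii): the set of variables occurring in $\lcm(m,n)$ is the union of those occurring in $m$ and in $n$, which is the same as the set occurring in the product $mn$. Since $\ord$ of a monomial is the $\max$ of $\deg$ over the weights of the occurring variables, and $\max$ over a union is the $\max$ of the two $\max$es, we get $\ord(\lcm(m,n)) = \ord(mn) = \max(\ord(m),\ord(n))$. The final assertion is immediate: if $m\mid n$ then every variable occurring in $m$ occurs in $n$, so the $\max$ defining $\ord(m)$ is taken over a subset of the one defining $\ord(n)$, whence $\ord(m)\leq\ord(n)$.

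I do not expect any genuine obstacle here; the proof is essentially routine once one is careful with the $-\infty$ conventions in $\hN$ and observes that the arguments are formally identical to those for $\w$, with the commutative monoid $(\hN,\max)$ playing the role of $(\hSigma,+)$ and $\deg$ transporting the $\Sigma$-action to translation by $\deg(\sigma)$. The one place meriting a line of care is part (i) in the case $m = 1$, to make sure the $-\infty$ bookkeeping is consistent with the stated identity.
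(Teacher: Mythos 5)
Your proof is correct and follows essentially the same route as the paper: the paper likewise splits part (i) into the case $m=1$ and the explicit computation $\ord(\sigma\cdot m)=\max_p\deg(\sigma\delta_p)=\deg(\sigma)+\max_p\deg(\delta_p)$, and disposes of part (ii) by noting that the order of a monomial depends only on the occurring variables and not on their exponents. Your write-up merely spells out the part (ii) argument (union of variable sets, $\max$ over a union) that the paper leaves as ``follows immediately as in Proposition \ref{wgood}''.
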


\begin{proof}
For $m = 1$ one has $\ord(\sigma\cdot m) = \ord(m) = -\infty =
\deg(\sigma) + \ord(m)$. If otherwise $m =
x_{i_1}(\delta_1)^{\alpha_1}\cdots x_{i_k}(\delta_k)^{\alpha_k}$
then $\sigma\cdot m =
x_{i_1}(\sigma\delta_1)^{\alpha_1}\cdots x_{i_k}(\sigma\delta_k)^{\alpha_k}$
and hence $\ord(\sigma\cdot m) =
\max(\deg(\sigma\delta_1),\ldots,\deg(\sigma\delta_k)) =
\deg(\sigma) + \max(\deg(\delta_1),\ldots,\deg(\delta_k)) =
\deg(\sigma) + \ord(m)$.
Property (ii) follows immediately as in Proposition \ref{wgood}.
\end{proof}

\begin{definition}
Let $I$ be an ideal of $P$. We call $I$ a {\em $\ord$-graded ideal} if
$I = \sum_i I_i$ with $I_i = I\cap P_i$. If $I$ is also a $\Sigma$-ideal
then $\sigma\cdot I_i\subset I_{\deg(\sigma) + i}$
for any $\sigma\in\Sigma$ and $i\in\hN$.
\end{definition}

Consider now $f,g\in P,f\neq g$ two $\ord$-homogeneous elements.
The S-polynomial $h = \spoly(f,g)$ is clearly $\ord$-homogeneous
and $\ord(h) = \max(\ord(f),\ord(g))$. Then $\ord(f),\ord(g)\leq d$
implies that $\ord(h)\leq d$, for some $d\in\N$ and one proves the
following result as for Proposition \ref{wtermin}.

\begin{proposition}[Truncated termination over the order]
\label{ordtermin}
Let $I\subset P$ be a $\ord$-graded $\Sigma$-ideal and fix $d\in\N$.
Assume $I$ has a $\ord$-homogeneous basis of $H$ such that $H_d =
\{f\in H\mid \ord(f)\leq d\}$ is a finite set. Then, there is
a $\ord$-homogeneous \Gr\ $\Sigma$-basis $G\subset I$ such that $G_d$
is also a finite set. In other words, if we consider for \SigmaGBasis\
a selection strategy of the S-polynomials based on their orders, we have
that the $d$-truncated version of \SigmaGBasis\ terminates in a finite
number of steps.
\end{proposition}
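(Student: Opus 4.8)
The plan is to mirror exactly the argument used in the proof of Proposition~\ref{wtermin}, replacing the monoid $(\hSigma,+)$ by $(\hN,\max)$ and the weight function by the order function throughout. First I would recall that the algorithm \SigmaGBasis\ is nothing but Buchberger's algorithm applied to the (possibly infinite) basis $\Sigma\cdot H$ of $I$, with the extra $\Sigma$-criterion used to discard all but finitely many S-polynomials in each orbit. By Proposition~\ref{ordgood}(i) every element of $\Sigma\cdot H$, and hence every element of the computed set $\Sigma\cdot G$, is $\ord$-homogeneous, since $\ord(\sigma\cdot f)=\deg(\sigma)+\ord(f)$. Moreover, as observed just before the statement, $\spoly(f,g)$ of two $\ord$-homogeneous elements is again $\ord$-homogeneous with $\ord(\spoly(f,g))=\max(\ord(f),\ord(g))$, so S-polynomials never increase the order beyond the maximum of their parents. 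This is what makes a $d$-truncated selection strategy coherent: no S-polynomial of order $\leq d$ ever produces a new generator of order $>d$.

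Next I would carry out the finiteness count. Set $H'_d=\{\sigma\cdot f\mid \sigma\in\Sigma,\ f\in H,\ \deg(\sigma)+\ord(f)\leq d\}$. Because $H_d=\{f\in H\mid\ord(f)\leq d\}$ is finite and, for each such $f$, there are only finitely many $\sigma\in\Sigma$ with $\deg(\sigma)\leq d$ (indeed $\deg(\sigma)=\sum_i\alpha_i\leq d$ bounds each exponent $\alpha_i$, so $\{\sigma:\deg(\sigma)\leq d\}$ is finite — this replaces the use of sequentiality of $<$ in Proposition~\ref{wtermin}), the set $H'_d$ is finite. Let $X_d$ be the finite set of variables of $P$ occurring in the elements of $H'_d$ and put $P_{(d)}=K[X_d]\subset P$. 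I claim the $d$-truncated \SigmaGBasis, with the S-polynomial selection strategy ordered by $\ord$, computes a subset of a \Gr\ basis of the ideal $I_{(d)}\subset P_{(d)}$ generated by $H'_d$: every S-polynomial treated has order $\leq d$, hence (by the order-additivity of $\lcm$'s in Proposition~\ref{ordgood}(ii) and the divisibility statement there) reduces modulo elements of order $\leq d$, all of which lie in $P_{(d)}$, and every new element produced again has order $\leq d$ and only involves variables already in $X_d$. Then Noetherianity of the finitely generated polynomial ring $P_{(d)}$ forces the computation to terminate, and the finitely many new generators of order $\leq d$ together with $H_d$ exhibit the desired finite $G_d$.

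The only point requiring a touch more care than in Proposition~\ref{wtermin} is the replacement of the \emph{sequential} hypothesis: there one used that $\{\tau\leq\sigma\}$ is finite, whereas here I must instead invoke that $\{\sigma\in\Sigma:\deg(\sigma)\leq d\}$ is finite, which holds unconditionally for $\Sigma\cong\N^r$ and does not need compatibility of $<$ with $\deg$. I would remark that this is genuinely the main (and essentially only) obstacle, and it is mild: the bound $\sum_i\alpha_i\leq d$ on the multidegree of $\sigma$ is an elementary consequence of $\Sigma$ being finitely generated free commutative. Everything else — $\ord$-homogeneity of $\Sigma\cdot G$, order-preservation under S-polynomials and reductions, and the descent to the Noetherian subring $P_{(d)}$ — is word-for-word the argument of Proposition~\ref{wtermin} with $(\hSigma,+,\leq)$ replaced by $(\hN,\max,\leq)$, so I would simply say ``one proves this exactly as Proposition~\ref{wtermin}, using Proposition~\ref{ordgood} in place of Proposition~\ref{wgood} and the finiteness of $\{\sigma:\deg(\sigma)\leq d\}$ in place of sequentiality of $<$,'' and leave the routine verifications to the reader.
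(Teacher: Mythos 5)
Your proposal is correct and follows essentially the same route as the paper, which itself only remarks that one argues ``as for Proposition~\ref{wtermin}'': you reduce to Buchberger's algorithm on the finite set $H'_d$ inside the Noetherian subring $K[X_d]$, using Proposition~\ref{ordgood} in place of Proposition~\ref{wgood}. Your observation that finiteness of $\{\sigma\in\Sigma\mid\deg(\sigma)\leq d\}$ holds unconditionally for $\Sigma\cong\N^r$, so that sequentiality of $<$ is not needed here, is a correct and mildly sharpening remark but does not change the argument.
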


By means of weight and order functions one has criterions, also in the
non-graded case, that provide that a \Gr\ $\Sigma$-basis is the eventually
finite complete one even if it has been computed within some bounded
weight or order for the algebra $P$ that is over a finite number of variables.
This is of course important because actual computations can be only performed
in such a way. As before, we fix a sequential monomial ordering $<$ on $\Sigma$.

\begin{definition}
Let $\prec$ be a monomial $\Sigma$-ordering of $P$. We call $\prec$
{\em compatible with the weight function} if $\w(m) < \w(n)$ implies that
$m\prec n$, for all $m,n\in M$. In a similar way, one defines when $\prec$
is {\em compatible with the order function}.
\end{definition}

\begin{proposition}
Let $\prec_w$ be a weight $\Sigma$-ordering as in Definition \ref{weightmord}.
Then $\prec_w$ is compatible with the weight function. In particular,
if the monomial order $<$ of $\Sigma$ is compatible with $\deg$ then
$\prec_w$ is also compatible with the order function.
\end{proposition}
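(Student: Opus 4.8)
The plan is to unwind the definition of the weight $\Sigma$-ordering $\prec_w$ given in Definition \ref{weightmord} and show directly that a strict inequality of weights forces a strict inequality in $\prec_w$. Recall that for $m\in M$ with $m\neq 1$ and $m = m(\delta_1)\cdots m(\delta_k)$ (with $m(\delta_j)\in M(\delta_j)$ nontrivial and $\delta_1 > \ldots > \delta_k$ in the monomial order $<$ of $\Sigma$), Proposition \ref{wgood} gives $\w(m) = \delta_1 = \max_<(\delta_1,\ldots,\delta_k)$; that is, the weight of a monomial is the $<$-largest weight of a variable occurring in it. So the first step is: given $m,n\in M$ with $\w(m) < \w(n)$, write out the block decompositions $m = m(\delta_1)\cdots m(\delta_k)$ and $n = n(\varepsilon_1)\cdots n(\varepsilon_l)$ with $\delta_1 > \ldots > \delta_k$ and $\varepsilon_1 > \ldots > \varepsilon_l$, so that $\w(m) = \delta_1$ and $\w(n) = \varepsilon_1$, whence $\delta_1 < \varepsilon_1$.

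Next I would compare $m$ and $n$ block by block against the common refinement of the two sets of weights, exactly as the definition of $\prec_w$ prescribes (one pads with trivial blocks $m(\tau) = 1$ whenever $\tau$ does not occur). The key observation is that $\varepsilon_1$ is strictly $<$-larger than every weight appearing in $m$, since all weights in $m$ are $\leq_< \delta_1 <_< \varepsilon_1$. Hence in the block position corresponding to $\varepsilon_1$, the monomial $m$ contributes the trivial block $1$ while $n$ contributes the nontrivial block $n(\varepsilon_1)$, and $1 \prec n(\varepsilon_1)$ in $M(\varepsilon_1)$ by property (i) of the monomial ordering on $P(\varepsilon_1)$. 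Moreover, every block of $m$ and $n$ in a position strictly above $\varepsilon_1$ (i.e.\ for a weight $>_< \varepsilon_1$) is trivial for both, so the two monomials agree there. By the definition of $\prec_w$ (first differing block, reading from the $<$-largest weight down), this yields $m \prec_w n$. The edge case $m = 1$ is immediate: $\w(1) = 0$, and $0 < \w(n)$ just reflects $n\neq 1$, while $1 \prec_w n$ holds because $1\preceq n$ for any monomial ordering.

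For the second assertion, suppose additionally that $<$ is compatible with $\deg$. Then, as remarked just before Proposition \ref{ordgood}, $\ord(m) = \deg(\w(m))$ for all $m\in M$, and $\deg\colon(\hSigma,\cdot)\to(\hN,+)$ together with compatibility gives that $\ord(m) < \ord(n)$ forces $\deg(\w(m)) < \deg(\w(n))$, hence $\w(m) < \w(n)$ by the compatibility hypothesis, hence $m\prec_w n$ by the first part. So $\prec_w$ is compatible with the order function as well.

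I do not expect a genuine obstacle here: the statement is essentially a bookkeeping consequence of the two facts that (a) $\w(m)$ equals the $<$-maximum of the weights of the variables in $m$ (Proposition \ref{wgood}) and (b) $\prec_w$ is the block/lexicographic ordering indexed by weights in $<$-decreasing order with $1\preceq{}$ everything in each block. The only point requiring a little care is the alignment of the two block decompositions over a common index set of weights and confirming that the first block in which $m$ and $n$ differ is precisely the one of weight $\w(n)$; making that indexing explicit is the crux of a clean write-up.
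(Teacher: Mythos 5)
Your proof is correct and takes essentially the same approach as the paper's: both unwind the block decomposition $m = m(\delta_1)\cdots m(\delta_k)$ underlying Definition \ref{weightmord} and use the fact that $\w(m)$ is the $<$-largest weight occurring in $m$, then deduce the order-function statement from $\ord = \deg\circ\,\w$ exactly as you do. The only cosmetic difference is one of direction: the paper assumes $m\prec_w n$ and concludes $\w(m)\leq\w(n)$ (the contrapositive), whereas you argue directly from $\w(m)<\w(n)$ by locating the first differing block at the position of weight $\w(n)$.
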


\begin{proof}
Let $m = m(\delta_1)\cdots m(\delta_k),n = n(\delta_1)\cdots n(\delta_k)$
two monomials of $P$ with $m(\delta_i),n(\delta_i)\in M(\delta_i)$ and
$\delta_1 > \ldots > \delta_k$. Assume $m\prec_w n$ that is
$m(\delta_j) = n(\delta_j)$ if $j < i$ and $m(\delta_i)\prec n(\delta_i)$
for some $1\leq i\leq k$. If $i > 1$ or $m(\delta_i)\neq 1$ then clearly 
$\w(m) = \w(n) = \delta_1$. Otherwise, we conclude $\w(m) < \delta_1 = \w(n)$.
Moreover, if $<$ is compatible with $\deg$ then $\ord(m) = \deg(\w(m)) <
\deg(\w(n)) = \ord(n)$ implies that $\w(m) < \w(n)$ and hence $m\prec_w n$.
\end{proof}

For any $\delta\in\Sigma, d\in\N$ define now $\Sigma_\delta =
\{\sigma\in\Sigma\mid\sigma\leq\delta\}$ and $\Sigma_d =
\{\sigma\in\Sigma\mid\deg(\sigma)\leq d\}$. 

\begin{proposition}[Finite $\Sigma$-criterion]
\label{finsigmacrit}
Assume the $\Sigma$-ordering of $P$ is compatible with the weight function.
Let $G\subset P$ be a finite set and denote $I$ the $\Sigma$-ideal
generated by $G$. Moreover, define $\delta = \max_<\{\w(\lm(g))\mid g\in G\}$.
Then, $G$ is a \Gr\ $\Sigma$-basis of $I$ if and only if for all $f,g\in G$
and for any $\sigma,\tau\in\Sigma$ such that $\gcd(\sigma,\tau) = 1$ and
$\gcd(\sigma\cdot\lm(f),\tau\cdot\lm(g))\neq 1$, the S-polynomial
$\spoly(\sigma\cdot f,\tau\cdot g)$ has a \Gr\ representation
with respect to the finite set $\Sigma_{\delta^2}\cdot G$.
In the same way, if the $\Sigma$-ordering of $P$ is compatible with the
order function and $d = \max\{\ord(\lm(g))\mid g\in G\}$, then
$G$ is a \Gr\ $\Sigma$-basis of $I$ when the above S-polynomials have
a \Gr\ representation with respect to $\Sigma_{2d}\cdot G$.
\end{proposition}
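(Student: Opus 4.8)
The plan is to reduce the statement to the $\Sigma$-criterion (Proposition~\ref{sigmacrit}) by showing that, under the compatibility hypothesis, a $\Gr$ representation with respect to the full infinite basis $\Sigma\cdot G$ can always be replaced by one with respect to the finite truncation $\Sigma_{\delta^2}\cdot G$ (respectively $\Sigma_{2d}\cdot G$). One direction is immediate: if $G$ is a $\Gr$ $\Sigma$-basis, then by Proposition~\ref{sigmacrit} every $\spoly(\sigma\cdot f,\tau\cdot g)$ with $\gcd(\sigma,\tau)=1$ has a $\Gr$ representation with respect to $\Sigma\cdot G$, and I will argue that this representation only uses finitely many translates, all lying in $\Sigma_{\delta^2}\cdot G$. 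For the converse, I invoke Proposition~\ref{sigmacrit} directly: a $\Gr$ representation with respect to the subset $\Sigma_{\delta^2}\cdot G\subset\Sigma\cdot G$ is a fortiori one with respect to $\Sigma\cdot G$, so the hypothesis of the $\Sigma$-criterion is met and $G$ is a $\Gr$ $\Sigma$-basis.

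The heart of the argument is therefore the weight (or order) bookkeeping controlling which translates $\nu\cdot g_\nu$ can appear. Fix $f,g\in G$ and $\sigma,\tau\in\Sigma$ with $\gcd(\sigma,\tau)=1$ and $\gcd(\sigma\cdot\lm(f),\tau\cdot\lm(g))\neq 1$; set $h=\spoly(\sigma\cdot f,\tau\cdot g)$. First I bound $\w(\lm(h))$. Since $\gcd(\sigma\cdot\lm(f),\tau\cdot\lm(g))\neq 1$, the monomials $\sigma\cdot\lm(f)$ and $\tau\cdot\lm(g)$ share a variable, so by Proposition~\ref{wgood}(i) one has $\w(\sigma\cdot\lm(f))=\sigma\w(\lm(f))$ and $\w(\tau\cdot\lm(g))=\tau\w(\lm(g))$, and these two elements of $\hSigma$ have a common ``coordinate'', which forces $\sigma\w(\lm(f))=\tau\w(\lm(g))$ (recall $\w$ of any monomial is the $\max_<$ of the weights of its variables, and a shared variable pins this down). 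Actually the key point is that $\gcd(\sigma,\tau)=1$ together with a shared variable in the shifted leading monomials forces both $\sigma$ and $\tau$ to be ``small'' relative to $\delta$: writing $\w(\lm(f)),\w(\lm(g))\leq\delta$ and observing that $\lcm(\sigma\cdot\lm(f),\tau\cdot\lm(g))$ has weight $\sigma\w(\lm(f))+\tau\w(\lm(g))\leq\delta^2$ in the dioid $(\hSigma,+)$ (using $\w(ab)=\w(a)+\w(b)$ and $\delta+\delta=\delta\cdot\delta=\delta^2$ under the identification of $(\Sigma,\max)$ with the multiplicative structure — here one uses that $<$ is a monomial order so $\max_<(\delta,\delta')\le\delta\delta'$), I get $\w(\lm(h))\leq\delta^2$. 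Then, in any $\Gr$ representation $h=\sum_\nu f_\nu(\nu\cdot g_\nu)$ with respect to $\Sigma\cdot G$, each term satisfies $\lm(f_\nu)\,\lm(\nu\cdot g_\nu)\preceq\lm(h)$, hence by weight-compatibility of $\prec$ we get $\w(\lm(\nu\cdot g_\nu))\leq\w(\lm(h))\leq\delta^2$; and since $\w(\lm(\nu\cdot g_\nu))=\nu\,\w(\lm(g_\nu))\geq\nu$ (as $\w(\lm(g_\nu))\geq 1$), each index $\nu$ lies in $\Sigma_{\delta^2}$. Thus the representation already lives over $\Sigma_{\delta^2}\cdot G$.

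Two further details complete the picture. First, I must check $\Sigma_{\delta^2}\cdot G$ is a finite set: this is exactly the sequentiality of $<$ applied to $\Sigma_{\delta^2}=\{\sigma\mid\sigma\leq\delta^2\}$ together with finiteness of $G$. Second, the claim in the text after the proof — that only finitely many S-polynomials $\spoly(\sigma\cdot f,\tau\cdot g)$ survive both $\gcd(\sigma,\tau)=1$ and $\gcd(\sigma\cdot\lm(f),\tau\cdot\lm(g))\neq 1$ — follows from the same weight estimate: both $\sigma\w(\lm(f))$ and $\tau\w(\lm(g))$ are $\leq\delta^2$, bounding $\sigma,\tau$ inside the finite set $\Sigma_{\delta^2}$. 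The order-function version is entirely parallel, replacing $\w$ by $\ord$, the dioid $(\hSigma,+)$ by $(\hN,\max)$, the bound $\delta+\delta=\delta^2$ by $\max(d,d)+\text{(shift)}\leq 2d$ via Proposition~\ref{ordgood}, and weight-compatibility by order-compatibility of $\prec$. The main obstacle I anticipate is getting the constant right: one has to be careful that the weight of the \emph{lcm} of the two shifted leading terms, not merely of each one separately, is what governs $\w(\lm(h))$, and that the passage from $\w(\lm(h))\le\delta^2$ to ``$\nu\in\Sigma_{\delta^2}$'' genuinely uses $\w(\lm(g_\nu))\ge 1$ for $g_\nu\ne 0$ — i.e. that every leading monomial of a nonzero element of $G$ actually involves some variable, which is automatic since $1\notin\lm(G)$ unless $G$ contains a unit.
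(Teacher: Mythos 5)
Your overall architecture is the same as the paper's: reduce to Proposition \ref{sigmacrit} by showing that any \Gr\ representation of $\spoly(\sigma\cdot f,\tau\cdot g)$ with respect to $\Sigma\cdot G$ automatically involves only translates $\nu\cdot g_\nu$ with $\nu\le\delta^2$, via the chain $\nu\le\nu\,\w(\lm(g_\nu))=\w(\nu\cdot\lm(g_\nu))\le\w(\lm(h))\le\w\bigl(\lcm(\sigma\cdot\lm(f),\tau\cdot\lm(g))\bigr)\le\delta^2$ together with weight-compatibility of $\prec$. That part, the sequentiality remark guaranteeing finiteness of $\Sigma_{\delta^2}\cdot G$, and the caveat about $\w(\lm(g_\nu))\ge 1$ are all in order and match the paper.

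The gap is in the one step that actually makes the criterion finite: why $\sigma\le\delta$ and $\tau\le\delta$. Your first attempt --- that a shared variable forces $\sigma\,\w(\lm(f))=\tau\,\w(\lm(g))$ --- is false: the common variable of $\sigma\cdot\lm(f)$ and $\tau\cdot\lm(g)$ need not be the one realizing the maximum in the weight function, so it pins down nothing about $\w$ of the whole monomials. (Likewise $\delta+\delta=\delta$ in the dioid $(\hSigma,+)$, not $\delta^2$; the inequality you need is $\sigma\,\w(\lm(f))\le\delta\cdot\delta$, which already presupposes $\sigma\le\delta$.) You then assert that coprimality ``forces both $\sigma$ and $\tau$ to be small'' without deriving it. The missing argument, which is the heart of the paper's proof, is: a common variable of the two shifted leading monomials has the form $x_i(\sigma\alpha)=x_i(\tau\beta)$ with $x_i(\alpha)\mid\lm(f)$ and $x_i(\beta)\mid\lm(g)$, so $\sigma\alpha=\tau\beta$; since $\gcd(\sigma,\tau)=1$ this gives $\sigma\mid\beta$ and $\tau\mid\alpha$, hence $\sigma\le\beta\le\w(\lm(g))\le\delta$ and $\tau\le\alpha\le\w(\lm(f))\le\delta$. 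Only then does $\w\bigl(\lcm(\sigma\cdot\lm(f),\tau\cdot\lm(g))\bigr)=\max(\sigma\,\w(\lm(f)),\tau\,\w(\lm(g)))\le\delta^2$ follow; the order-function case is parallel, with $\deg(\sigma),\deg(\tau)\le d$ yielding the bound $2d$. With this step supplied your proof coincides with the paper's.
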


\begin{proof}
Let $\spoly(\sigma\cdot f,\tau\cdot g) = h = \sum_\nu f_\nu (\nu\cdot g_\nu)$
be a \Gr\ representation with respect to $\Sigma\cdot G$ that is
$\lm(h)\succeq \lm(f_\nu)(\nu\cdot \lm(g_\nu))$ for all $\nu$. We want to
bound the elements $\nu\in\Sigma$ with respect to the ordering $<$.
Put $m = \lm(f),n = \lm(g)$ and hence $\lm(\sigma\cdot f) =
\sigma\cdot m,\lm(\sigma\cdot g) = \sigma\cdot n$. By product criterion,
we can assume that $u = \gcd(\sigma\cdot m,\tau\cdot n)\neq 1$.
Then, there is a variable $x_i(\sigma \alpha) = x_i(\tau \beta)$
that divides $u$ where $x_i(\alpha)$ divides $m$ and hence
$\alpha\leq\w(m)\leq \delta$ and $x_i(\beta)$ divides $n$
and therefore $\beta\leq\w(n)\leq \delta$. Then $\sigma \alpha = \tau \beta$
and one has that $\sigma\mid \beta,\tau\mid \alpha$ because
$\gcd(\sigma,\tau) = 1$. We conclude that $\sigma,\tau\leq\delta$ and
if $v = \lcm(\sigma\cdot m,\tau\cdot m)$ then $\w(v) =
\max(\sigma\w(m),\tau\w(n))\leq \delta^2$.
Clearly $v\succ \lm(h)\succeq \nu\cdot\lm(g_\nu)$ and hence $\delta^2\geq\w(v)
\geq \nu \w(\lm(g_\nu))\geq \nu$. In a similar way, one argues for the
order function.
\end{proof}

The above criterion implies that with respect to $\Sigma$-orderings compatible
with weight or order functions one has an algorithm able to compute
a finite \Gr\ $\Sigma$-basis, whenever this exists, in a finite number of steps.
To fix ideas, let us consider only weights. If $G$ is a finite
$\Sigma$-basis of $I$ and $\delta = \max_<\{\w(\lm(g))\mid g\in G\}$,
we may start considering all S-polynomials $\spoly(\sigma\cdot f,\tau\cdot g)$
with $f,g\in G$ and $\sigma,\tau\in\Sigma$ such that $\gcd(\sigma,\tau) = 1$ and
$\gcd(\sigma\cdot\lm(f),\tau\cdot\lm(g))\neq 1$. Note that such S-polynomials
are in a finite number since in the above proof we observed that
$\sigma,\tau\leq\delta$ and the monomial ordering of $\Sigma$ is sequential.
Moreover, one has also that these S-polynomials can be reduced only
by elements of the finite set $\Sigma_{\delta^2}\cdot G$.
If, as a result of some reduction, a new element $f\neq 0$ has to be added
to the $\Sigma$-basis $G$ and $\w(\lm(f)) = \delta' > \delta$
then it is sufficient to update the weight bound $\delta$ to $\delta'$.


\section{Homogenizing with respect to order function}

The purpose of this section is to analyze (de)homogenization processes
in the context of $\Sigma$-ideals. Such methods are generally developed
to have structures and computations that are homogeneous with respect
to some grading, even if the input data are not such. Besides to
the theoretical advantages as the concept of projective closure,
these techniques usually imply computational benefits (see for instance
\cite{BCR}). Note that for $\Sigma$-ideals it is completely useless
to consider classical gradings (total degree, multidegree, etc)
since they not provide compatibility conditions with the $\Sigma$-action
like the ones contained in Proposition 6.4 and Proposition 6.9.
We decided then to present (de)homogenization methods only for the
grading defined by the order function because univariate homogenizations
are usually more efficient than multivariate ones since leading monomials
are preserved by the homogenization process. Note finally that
a major difference of the theory we present here with the classical
one is that the kernel of the dehomogenizing homomorphism contains
a non-trivial graded ideal which implies that the homogenization
process has to be considered modulo such ideal.

Let $t$ be a new variable disjoint with $X$. Define $\bX = X\cup\{t\},
\bX(\Sigma) = \bX \times \Sigma, \bP = K[\bX(\Sigma)]$ and finally
$\bM = \Mon(\bP)$. Consider the algebra endomorphism $\varphi:\bP\to\bP$
such that $x_i(\sigma)\mapsto x_i(\sigma)$ and $t(\sigma)\mapsto 1$,
for all $i,\sigma$. Clearly $\varphi^2 = \varphi$ and $P = \varphi(\bP)$.
Moreover, one has that $\varphi$ is a $\Sigma$-algebra endomorphism.
Then $\varphi$ defines a bijective correspondence between all $\Sigma$-ideals
of $P$ and $\Sigma$-ideals of $\bP$ containing $\ker\varphi =
\langle t(1) - 1 \rangle_\Sigma$.

\begin{definition}
Denote by $N = N_\ord$ the largest $\ord$-graded $\Sigma$-ideal contained
in $\ker\varphi$ that is the ideal generated by all $\ord$-homogeneous
elements $f\in\bP$ such that $\varphi(f) = 0$.
\end{definition}

\begin{proposition}
\label{hidbas}
The ideal $N\subset\bP$ is generated by the elements
\begin{itemize}
\item[(i)] $t(\sigma) - t(\tau)$ for all $\sigma,\tau\in\Sigma,\sigma\neq\tau,
\deg(\sigma) = \deg(\tau)$;
\item[(ii)] $t(\sigma) t(\tau) - t(\sigma),x(\sigma) t(\tau) - x(\sigma)$
for any $\sigma,\tau\in\Sigma,\deg(\sigma)\geq\deg(\tau)$.
\end{itemize}
\end{proposition}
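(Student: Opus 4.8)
The plan is to prove the two inclusions $N' \subseteq N$ and $N \subseteq N'$ separately, where $N'$ denotes the ideal of $\bP$ generated by the elements listed in (i) and (ii).

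For $N' \subseteq N$ it suffices to verify that each generator in (i) and (ii) is $\ord$-homogeneous and lies in $\ker\varphi$, since $N$ is by definition generated by \emph{all} such elements. Homogeneity follows from $\ord(t(\sigma)) = \deg(\sigma)$ together with Proposition~\ref{ordgood}(ii): for instance $\ord(t(\sigma)t(\tau)) = \max(\deg\sigma,\deg\tau) = \deg\sigma = \ord(t(\sigma))$ when $\deg\sigma \geq \deg\tau$, and the same computation handles the other two families; and $\varphi$ kills every generator because it sends each $t(\rho)$ to $1$. (If one prefers to see directly that $N'$ is a $\Sigma$-ideal, note that $\nu\cdot{}$ carries a generator of a given type to another generator of the same type, using $\deg(\nu\sigma) = \deg\nu + \deg\sigma$ and cancellativity of $\Sigma$.)

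The substance is the reverse inclusion. As $N$ is generated by the $\ord$-homogeneous elements of $\ker\varphi$, it is enough to show that every such element $f$ lies in $N'$. Put $d = \ord(f)$; if $d = -\infty$ then $f$ is a constant with $\varphi(f)=0$, hence $f=0\in N'$, so assume $d\in\N$. The key step is a normal form modulo $N'$: every monomial $m\in\bM$ with $\ord(m)=d$ is congruent modulo $N'$ either to its $X$-part $\mu\in\Mon(P)$, which occurs exactly when the top degree $D$ of a $t$-variable dividing $m$ satisfies $D\leq\ord(\mu)$ (so that $\ord(\mu)=d$), or to $\mu\,t(\rho_d)$ with $\ord(\mu)<d$, which occurs when $D>\ord(\mu)$ (so that $D=d$); here $\rho_d\in\Sigma$ is a fixed element with $\deg(\rho_d)=d$. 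To prove this, first use the relations~(i) to replace all $t$-variables of equal degree by a common one; then use $t(\sigma)^2\equiv t(\sigma)$ and $t(\sigma)t(\tau)\equiv t(\sigma)$ for $\deg\sigma\geq\deg\tau$, both from~(ii), to collapse the whole $t$-part of $m$ to a single $t(\rho_D)$; and finally, when $\ord(\mu)\geq D$, apply $x_i(\sigma)t(\rho_D)\equiv x_i(\sigma)$ from~(ii) to a variable $x_i(\sigma)\mid\mu$ of maximal degree $\deg\sigma=\ord(\mu)$ to absorb $t(\rho_D)$ into $\mu$. Note that each of these replacements is monomial-to-monomial, so $m$ reduces to a single monomial of the stated form.

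Granting the normal form, write $f=\sum_j c_j m_j$ with $\ord(m_j)=d$ for all $j$; reducing each $m_j$ and collecting terms gives
\[
f \equiv \sum_{\mu\in\Mon(P),\ \ord(\mu)=d} a_\mu\,\mu \;+\; \sum_{\mu\in\Mon(P),\ \ord(\mu)<d} b_\mu\,\mu\,t(\rho_d) \pmod{N'},
\]
where the two index sets are disjoint \emph{precisely because} $f$ is $\ord$-homogeneous of order $d$ (a monomial reducing to a pure $X$-monomial $\mu$ has $\ord(\mu)=d$, while one reducing to $\mu\,t(\rho_d)$ has $\ord(\mu)<d$). Applying $\varphi$, which fixes every $\mu\in\Mon(P)$ and sends $\mu\,t(\rho_d)$ to $\mu$, and using $N'\subseteq\ker\varphi$ so that $\varphi(f)=0$ is preserved, yields $\sum_{\ord\mu=d} a_\mu\mu + \sum_{\ord\mu<d} b_\mu\mu = 0$ in $P$; since these are pairwise distinct monomials of $P$, all $a_\mu$ and $b_\mu$ vanish, so $f\in N'$. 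The one delicate point is the bookkeeping in the normal-form step — fixing the order in which the three families of relations are applied and checking that the dichotomy "order exactly $d$ versus order $<d$" is governed cleanly by comparing $\ord(\mu)$ with the top $t$-degree $D$ — but once this is in place the linear-independence argument closes the proof at once.
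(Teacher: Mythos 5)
Your proof is correct and follows essentially the same route as the paper: reduce an $\ord$-homogeneous element of $\ker\varphi$ modulo the generators (i),(ii) to a normal form whose monomials are either in $M$ of order $d$ or of the shape $\mu\,t(\rho_d)$ with $\ord(\mu)<d$, then apply $\varphi$ and conclude the normal form vanishes. The only (cosmetic) difference is that you finish by linear independence of the distinct image monomials, whereas the paper factors the normal form as $(t(\sigma)-1)g$ and derives a contradiction on orders.
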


\begin{proof}
Let $f\in\bP$ be a $\ord$-homogeneous element such that $\varphi(f) = 0$.
Since the polynomials of type (i),(ii) clearly belongs to $N$, we have
to prove that $f$ is congruent to $0$ modulo (i),(ii).
Assume first that all variables of $f$ belong to $t(\Sigma) =
\{t(\sigma)\mid \sigma\in\Sigma\}$. Recall that if $m =
t(\delta_1)^{\alpha_1}\cdots t(\delta_k)^{\alpha_k}$ is any monomial
of $f$ then $d = \ord(f) = \max(\deg(\delta_1),\ldots,\deg(\delta_k))$.
Therefore, one has that $f$ is congruent modulo (ii) to $f' =
\sum_i c_i t(\tau_i)$ where $\tau_i\in\Sigma,\deg(\tau_i) = d$
and $c_i\in K, \sum_i c_i = 0$. By applying identity (i) it follows that
$f$ is congruent to $(\sum_i c_i) t(\sigma) = 0$ for some fixed $\sigma$
such that $\deg(\sigma) = d$.

Consider now the general case when the variables of $f$ belong to
$\bX(\Sigma)$. Fix $\sigma\in\Sigma$ such that $\deg(\sigma) = d$.
Modulo the identities (i),(ii), one has that $f$ is congruent to a
polynomial $f'$ whose monomials are either of type $m\in M$ such that
$\ord(m) = d$ or of type $t(\sigma) n$ where $n\in M, \ord(n) < d$.
We show that in fact $f' = 0$. Denote $f' = t(\sigma) g - h$ where
$g,h$ are polynomials in $P$, $h$ is $\ord$-homogeneous and $\ord(h) = d$.
Since $0 = \varphi(f') = g - h$ one has that $f' = (t(\sigma) - 1) g$.
If we assume $g\neq 0$ then the monomials $n$ of $g$ are such that
$\ord(n) = d$ which is a contradiction. 
\end{proof}

We want now to define a bijective correspondence between all $\Sigma$-ideals
of $P$ and some class of $\ord$-graded $\Sigma$-ideals of $\bP$ containing $N$.

\begin{definition}
Let $I$ be any $\Sigma$-ideal of $P$. We define $I^*\subset\bP$ the largest
$\ord$-graded $\Sigma$-ideal contained in the preimage $\varphi^{-1}(I)$
that is $I^*$ is the ideal generated by all $\ord$-homogeneous elements
in $\varphi^{-1}(I)$. Clearly $N = 0^*\subset I^*$. We call $I^*$ the
{\em $\ord$-homogenization} of the $\Sigma$-ideal $I$.
\end{definition}

\begin{definition}
\label{hpoly}
Let $f\in P, f\neq 0$ and denote $f = \sum_d f_d$ the decomposition
of $f$ in its $\ord$-homogeneous components. We define $\topord(f) =
d' = \max\{d\}$. If $f\in K$ that is $d' = -\infty$ we put $f^* = f$.
Otherwise, we denote $f^* = t(\sigma) f$ where $\sigma\in\Sigma$
such that $\deg(\sigma) = d'$. We call $\topord(f)$ the {\em top order} of $f$
and $f^*$ its {\em $\ord$-homogenization}.
\end{definition}

Note that $\varphi(f^*) = f$ and hence the element $f^*$ is essentially
defined modulo the ideal $N$ (see also the next result). Owing to
generators (i) of $N$ in Proposition \ref{hidbas}, all variables
$t(\sigma)$ such that $\deg(\sigma) = d'$ are congruent modulo $N$. We don't
need then to specify which of these variables we use for defining
$f^* = t(\sigma) f$.

\begin{proposition}
Let $I$ be a $\Sigma$-ideal of $P$. Then $I^* = \langle f^*\mid
f\in I, f\neq 0\rangle + N$.
\end{proposition}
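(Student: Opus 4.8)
The plan is to prove the two inclusions separately, using the characterization of $I^*$ as the ideal generated by all $\ord$-homogeneous elements of $\varphi^{-1}(I)$ together with the structural description of $N = 0^*$ from Proposition \ref{hidbas} and the fact (noted after Definition \ref{hpoly}) that $\varphi(f^*) = f$.

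\medskip

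For the inclusion $\langle f^* \mid f \in I, f \neq 0\rangle + N \subseteq I^*$: First, $N \subseteq I^*$ holds by definition since $N = 0^* \subseteq I^*$. Next, for each $f \in I$, $f \neq 0$, I must check that $f^*$ lies in $I^*$, for which it suffices to show $f^*$ is $\ord$-homogeneous and $\varphi(f^*) \in I$. The latter is immediate: $\varphi(f^*) = f \in I$. For the former, by Definition \ref{hpoly} either $f^* = f \in K$ (so $d' = -\infty$ and $f^*$ is trivially $\ord$-homogeneous, though this forces $f = 0$ in an actual ideal unless $1 \in I$), or $f^* = t(\sigma) f$ with $\deg(\sigma) = d' = \topord(f)$. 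In the second case I would write $f = \sum_{d \le d'} f_d$ in $\ord$-homogeneous components and use property (ii) of Proposition \ref{ordgood}: multiplying $f_d$ by $t(\sigma)$ (a variable of order $\deg(\sigma) = d' \ge d$) gives $\ord(t(\sigma) f_d) = \max(d', d) = d'$, so every homogeneous piece of $t(\sigma) f$ has order exactly $d'$, i.e.\ $f^*$ is $\ord$-homogeneous. Since $I^*$ is an ideal containing both $N$ and all such $f^*$, it contains their sum.

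\medskip

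For the reverse inclusion $I^* \subseteq \langle f^* \mid f \in I, f \neq 0\rangle + N$: Since $I^*$ is generated by the $\ord$-homogeneous elements of $\varphi^{-1}(I)$, it is enough to show that every $\ord$-homogeneous $g \in \bP$ with $\varphi(g) \in I$ lies in the right-hand side. Set $f = \varphi(g) \in I$. The idea is to compare $g$ with $f^*$ (when $f \neq 0$) or with $0$ (when $f = 0$) and show the difference lies in $N$. When $f = 0$, $g$ is $\ord$-homogeneous with $\varphi(g) = 0$, so $g \in N$ by the very definition of $N$, and we are done. When $f \neq 0$, put $d = \ord(g)$; since $\varphi$ preserves the order grading on the complement of $\ker\varphi$ and $f = \varphi(g)$, I would argue $\topord(f) = d$ (here one uses that $\varphi$ sends $t(\tau) \mapsto 1$ and that $\ord$ of a monomial only depends on its variables' weights, so collapsing the $t$-variables cannot raise the top order, and a homogeneous $g$ of order $d$ not killed by $\varphi$ must produce a component of order $d$). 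Then $f^* = t(\sigma) f$ with $\deg(\sigma) = d$, and $\varphi(g - f^*) = f - f = 0$; moreover $g - f^*$ is $\ord$-homogeneous of order $d$ (both summands are). Hence $g - f^* \in N$ by definition of $N$, giving $g = f^* + (g - f^*) \in \langle f^* \mid f \in I, f\neq 0\rangle + N$.

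\medskip

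The main obstacle I expect is the bookkeeping around the degenerate cases and the claim $\topord(\varphi(g)) = \ord(g)$ for $\ord$-homogeneous $g \notin \ker\varphi$: one has to be careful that when several $t$-variables of maximal order collapse under $\varphi$ there is no cancellation dropping the top-order component of $f$ below $d$ — this is exactly where the $\ord$-homogeneity of $g$ is used, since all monomials of $g$ have order $d$ and each maps to a monomial of order $\le d$, while at least one $M$-monomial of order $d$ must survive in $\varphi(g) = f$ lest $g \in \ker\varphi$ contradicting $f \neq 0$ (using here that $N$ exhausts the $\ord$-homogeneous part of $\ker\varphi$ by Proposition \ref{hidbas}, so if $\varphi(g) = 0$ we'd be in the other case). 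Once this order-preservation point is nailed down, everything else is a direct application of the definitions of $I^*$, $N$, and $f^*$.
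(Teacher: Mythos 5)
Your first inclusion is fine and matches what the paper dismisses as ``clear.'' The problem is in the reverse inclusion, at exactly the point you flagged as the main obstacle: the claim that $\topord(\varphi(g)) = \ord(g)$ for an $\ord$-homogeneous $g$ with $\varphi(g)\neq 0$ is false, and your argument for it does not work. Take $g = t(\sigma)\,x_0(1)$ with $\deg(\sigma)=2$ (and $x_0(1)\in I$, so $g$ is one of the homogeneous generators of $I^*$). Then $g$ is $\ord$-homogeneous of order $2$, $f=\varphi(g)=x_0(1)\neq 0$, but $\topord(f)=1$. Your reasoning that ``at least one $M$-monomial of order $d$ must survive in $\varphi(g)$ lest $g\in\ker\varphi$'' is a non sequitur: every monomial of $g$ may be of the form $t(\tau)n$ with $\deg(\tau)=d>\ord(n)$, in which case $\varphi(g)\neq 0$ yet all of its monomials have order strictly below $d$. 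Consequently the next step also collapses: in this situation $f^*=t(\tau)x_0(1)$ with $\deg(\tau)=1$, so $g-f^*=t(\sigma)x_0(1)-t(\tau)x_0(1)$ is \emph{not} $\ord$-homogeneous, and its order-$2$ component $t(\sigma)x_0(1)$ is not killed by $\varphi$, so $g-f^*\notin N$. Your decomposition $g = f^* + (g-f^*)$ simply does not land in $\langle f^*\rangle + N$ by the route you describe.

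The repair is the one the paper uses. With $d'=\ord(g)$ and $d=\topord(f)$ one only has $d'\geq d$. Compare $g$ not with $f^*$ but with $h=t(\sigma)f$ where $\deg(\sigma)=d'$: then $h$ \emph{is} $\ord$-homogeneous of order $d'$ (each piece $t(\sigma)f_e$ has order $\max(d',e)=d'$), $\varphi(g-h)=0$, and $g-h$ is $\ord$-homogeneous of order $d'$, hence $g-h\in N$ by the definition of $N$. Then one uses the generators $t(\sigma)t(\tau)-t(\sigma)$ of $N$ (type (ii) in Proposition \ref{hidbas}, valid since $\deg(\sigma)=d'\geq d=\deg(\tau)$) to get $h = t(\sigma)f \equiv t(\sigma)t(\tau)f = t(\sigma)f^* \pmod N$, and $t(\sigma)f^*$ lies in $\langle f^*\mid f\in I\rangle$. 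In the example above this gives $g \equiv t(\sigma)t(\tau)x_0(1) = t(\sigma)f^*$, as it should. So the extra congruence through the multiplicative generators of $N$ is not optional bookkeeping; it is the step that absorbs the discrepancy between $\ord(g)$ and $\topord(\varphi(g))$.
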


\begin{proof}
Denote $J = \langle f^*\mid f\in I\rangle + N$. Clearly $J\subset I^*$.
Let $g\in I^*$ be a $\ord$-homogeneous element and define $f = \varphi(g)\in I$.
If $f = 0$ then $g\in N\subset J$. Otherwise, denote $d = \topord(f)$ and
$d' = \ord(g)$. Since clearly $d'\geq d$ one has that $g$ is congruent
modulo $N$ to $h = t(\sigma) f$, where $\sigma\in\Sigma$ such that
$\deg(\sigma) = d'$. Hence, if $d' = d$ then $h$ is congruent exactly to $f^*$.
Otherwise, the polynomial $h$ is congruent to $t(\sigma) f^*$.
In both cases, we conclude that $g$ is congruent modulo $N\subset J$
to an element of $J$ and therefore $g\in J$.
\end{proof}

If $I\subset P$ is a $\Sigma$-ideal one has clearly that $\varphi(I^*) = I$.
Moreover, if $J\subset\bP$ is a $\ord$-graded $\Sigma$-ideal containing $N$
then in general $J\subset \varphi(J)^*$.

\begin{definition}
Let $N\subset J\subset\bP$ be a $\ord$-graded $\Sigma$-ideal.
Define $J' = \varphi(J)^* = \langle \varphi(f)^* \mid f\in J,f\notin N,
f\ \ord\mbox{-homogeneous} \rangle + N$. Then $J\subset J'\subset\bP$ is
a $\ord$-graded $\Sigma$-ideal that we call the {\em saturation} of $J$.
\end{definition}

\begin{definition}
Let $J\subset\bP$ be a $\ord$-graded $\Sigma$-ideal containing $N$.
We say that $J$ is {\em saturated} if $J$ coincides with its saturation
$\varphi(J)^*$ that is for any $\ord$-homogeneous element $f\in J,f\notin N$
one has that $\varphi(f)^*\in J$. If $I$ is a $\Sigma$-ideal of $P$ then its
$\ord$-homogenization $I^*$ is clearly a saturated ideal.
\end{definition}

Therefore, a bijective correspondence is given between all $\Sigma$-ideals of $P$
and the saturated $\ord$-graded $\Sigma$-ideals of $\bP$ containing $N$. 

We want now to analyze the behaviour of \Gr\ $\Sigma$-bases under homogenization
and dehomogenization. Note that the arguments of Proposition \ref{hidbas}
implies clearly that the polynomials (i),(ii) are in fact a \Gr\ basis
of the ideal $N$ with respect to any monomial ordering of $\bP$. 
For this reason we introduce the following notion.

\begin{definition}
\label{Nnform}
A monomial $m\in \bM$ is said to be {\em normal modulo $N$} if $m\in M$
or $m = t(\sigma) n$ with $n\in M,\sigma\in\Sigma$ such that
$d = \deg(\sigma) > \ord(n)$. Moreover, we require that $t(\sigma) =
\min_\prec\{t(\tau)\mid \deg(\tau) = d\}$. A polynomial $f\in\bP$
is {\em in normal form modulo $N$} if all its monomials are normal
modulo $N$.
\end{definition}

Note that owing to generators (i) of the ideal $N$, we choose
$t(\sigma) = \min_\prec\{t(\tau)\mid \deg(\tau) = d\}$
since in this case $\lm(t(\sigma) - t(\tau)) = t(\tau)$.

\begin{definition}
\label{homorder}
Let $\prec$ be a $\Sigma$-ordering of $\bP$ compatible with the order
function. We call $\prec$ a {\em $\ord$-homogenization $\Sigma$-ordering}
if $t(\sigma) m\prec n$ for all $m,n\in M,\sigma\in\Sigma$ such that
$\deg(\sigma) = \ord(n) > \ord(m)$.
\end{definition}

It is easy to define one of the above orderings. Fix for instance
the {\em lex} or {\em degrevlex} monomial order on the polynomial
ring $\bP(1) = K[x_0(1),x_1(1),\ldots,t(1)]$ where
$x_0(1)\succ x_1(1)\succ \ldots\succ t(1)$. Moreover, fix a monomial ordering
on $\Sigma$ which is compatible with $\deg$ and define the weight
$\Sigma$-ordering $\prec_w$ of $\bP$ as in Definition \ref{weightmord}. 
Clearly $\prec_w$ is a $\ord$-homogenization $\Sigma$-ordering.

From now on, {\em we assume} $\bP$ be endowed with a $\ord$-homogenization
$\Sigma$-ordering.

\begin{proposition}
\label{phicompat}
Let $p,q\in\bM$ be two normal monomials modulo $N$ such that $\ord(p) = \ord(q)$.
Then $p\prec q$ implies that $\varphi(p)\prec\varphi(q)$.
\end{proposition}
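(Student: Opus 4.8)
The plan is to reduce the claim to the two possible shapes a normal monomial modulo $N$ can take, as listed in Definition \ref{Nnform}, and then to exploit the structure of a $\ord$-homogenization $\Sigma$-ordering together with the fact that $\prec$ is a weight-compatible block ordering.

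First I would dispose of the case in which neither $p$ nor $q$ involves the variable $t$, i.e.\ $p,q\in M$. Then $\varphi(p)=p$ and $\varphi(q)=q$, and there is nothing to prove. So the interesting situation is when at least one of $p,q$ is of the form $t(\sigma)n$ with $n\in M$ and $d=\deg(\sigma)>\ord(n)$; here $\varphi(t(\sigma)n)=n$. Since $\ord(t(\sigma)n)=\max(\deg(\sigma),\ord(n))=\deg(\sigma)=d$ and we are assuming $\ord(p)=\ord(q)$, both $p$ and $q$ have order exactly $d$. I would split into the two subcases: (a) $p=t(\sigma)m$ normal, $q=t(\tau)n$ normal, with $\deg(\sigma)=\deg(\tau)=d>\ord(m),\ord(n)$, so $\varphi(p)=m$, $\varphi(q)=n$; and (b) $p=t(\sigma)m$ normal with $\deg(\sigma)=d>\ord(m)$, while $q\in M$ with $\ord(q)=d$, so $\varphi(p)=m$, $\varphi(q)=q$.

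For subcase (a), because $p,q$ are both normal of the same order $d$, the requirement in Definition \ref{Nnform} forces $t(\sigma)=t(\tau)=\min_\prec\{t(\rho)\mid\deg(\rho)=d\}$; write this common variable as $t(\sigma)$. Then $p=t(\sigma)m$, $q=t(\sigma)n$, and the ordering $\prec$ being compatible with multiplication gives $p\prec q\iff m\prec n$, i.e.\ $\varphi(p)\prec\varphi(q)$, which is what we want. For subcase (b) the work is the key step: I claim $p=t(\sigma)m\prec q$ always holds here, because $\prec$ is a $\ord$-homogenization $\Sigma$-ordering and $\deg(\sigma)=d=\ord(q)>\ord(m)$ — this is exactly the defining inequality $t(\sigma)m\prec n$ of Definition \ref{homorder} with $n=q$. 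Thus the hypothesis $p\prec q$ is automatically satisfied and we must check its conclusion $\varphi(p)=m\prec\varphi(q)=q$. Here I would use weight-compatibility of the ordering: since $\ord(m)<d$ and the monomial order on $\Sigma$ is compatible with $\deg$, we get $\w(m)<\w(q)$ (both have order $d$ for $q$ and strictly less for $m$, and $\deg\circ\w=\ord$), hence $m\prec q$ by compatibility of $\prec$ with the weight function. The symmetric possibility, $p\in M$ and $q=t(\tau)n$ normal with $\deg(\tau)=d>\ord(n)$, cannot occur under the hypothesis $p\prec q$, because the same argument shows $q\prec p$ in that configuration; so it need not be treated.

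The main obstacle I anticipate is bookkeeping in subcase (b): one has to be careful that "$\prec$ compatible with the order function" and "$\prec$ compatible with the weight function" are both genuinely available (the latter is what is actually needed to compare $m$ with $q$), and that a $\ord$-homogenization $\Sigma$-ordering really does entail weight-compatibility in the way the preceding discussion (after Definition \ref{homorder}) indicates. Once that is pinned down, the rest is a short case check; I would also double-check that the normalization clause "$t(\sigma)=\min_\prec\{t(\tau)\mid\deg(\tau)=d\}$" is what lets subcase (a) collapse to a single $t$-variable rather than forcing a genuine comparison of two different $t(\sigma),t(\tau)$.
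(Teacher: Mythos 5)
Your case decomposition is exactly the paper's: both monomials in $M$ (trivial), both of the form $t(\cdot)\,m$ (where the normalization clause of Definition \ref{Nnform} forces the same $t$-variable, so you cancel it), and the mixed case $t(\sigma)m\prec q$ with $q\in M$, the reverse mixed case being excluded by Definition \ref{homorder}. The one place you diverge is the justification of the key step $m\prec q$ in the mixed case: you route it through weight-compatibility, invoking that the monomial order on $\Sigma$ is compatible with $\deg$ and that $\prec$ is compatible with the weight function. Neither of these is part of Definition \ref{homorder}; a $\ord$-homogenization $\Sigma$-ordering is only required to be a $\Sigma$-ordering \emph{compatible with the order function} satisfying the extra $t(\sigma)m\prec n$ condition. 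The paper closes this case in one line precisely from order-compatibility: $\ord(m)<\deg(\sigma)=\ord(q)$ implies $m\prec q$ by the very definition of compatibility with $\ord$. So the detour through $\w$ is both unnecessary and, in the generality of the statement, not justified; you in fact flag this worry yourself in your last paragraph, and the resolution is simply to use the order-compatibility hypothesis directly. With that substitution your argument coincides with the paper's.
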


\begin{proof}
By definition, the monomials $p,q$ are of type $m\in M$ or $t(\sigma) m$
with $\deg(\sigma) > \ord(m)$. Since $\prec$ is a $\ord$-homogenization order,
when comparing two of such monomials of the same order one has only
the following cases: $m\prec n$, $t(\sigma) m\prec t(\sigma) n$ or
$t(\sigma) m\prec n$. Then, we have to prove $\varphi(p) = m\prec n =
\varphi(q)$ only when $t(\sigma) m\prec n$. This follows immediately
from $\prec$ is compatible with the order function and $\ord(m) < \ord(n) =
\deg(\sigma)$.
\end{proof}

From now on, for any $f\in P,f\neq 0$ we denote by $f^*$ the normal form
of $t(\sigma)f$ modulo $N$ where $\sigma\in\Sigma,\deg(\sigma) = \topord(f)$.

\begin{proposition}
\label{phicompat2}
Let $f\in\bP,f\neq 0$ be a $\ord$-homogeneous polynomial in normal form
modulo $N$. Then $\lm(\varphi(f)) = \varphi(\lm(f))$. Moreover, we have
that $\lm(f^*) = \lm(f)$ for all $f\in P,f\neq 0$.
\end{proposition}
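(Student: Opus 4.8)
The plan is to prove the two assertions separately, reducing the second to the first. For the first assertion, fix a $\ord$-homogeneous $f\in\bP$, $f\neq 0$, in normal form modulo $N$, and write $f=\sum_\nu c_\nu p_\nu$ with distinct monomials $p_\nu\in\bM$ and $c_\nu\in K^\times$. Since $f$ is in normal form, every $p_\nu$ is a normal monomial modulo $N$, and since $f$ is $\ord$-homogeneous all of them have the same order, say $\ord(p_\nu)=d=\ord(f)$. Let $p_{\nu_0}=\lm(f)$, so $p_{\nu_0}\succ p_\nu$ for all $\nu\neq\nu_0$. First I would observe that $\varphi(f)=\sum_\nu c_\nu \varphi(p_\nu)$, but that the monomials $\varphi(p_\nu)$ need not all be distinct — this is the only subtlety — so after collecting terms some coefficients could in principle cancel. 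However, by Proposition \ref{phicompat} (applied to the pairs $p_\nu$, all of the same order $d$), the ordering of the $p_\nu$ by $\prec$ is preserved under $\varphi$: $p_\nu\prec p_{\nu_0}$ implies $\varphi(p_\nu)\preceq\varphi(p_{\nu_0})$, and in fact $\varphi(p_\nu)\prec\varphi(p_{\nu_0})$ unless $\varphi(p_\nu)=\varphi(p_{\nu_0})$. The key point is that $\varphi$ restricted to the set of normal monomials of a fixed order is \emph{injective}: if $p=t(\sigma)m$ and $q=t(\tau)n$ are normal of order $d$ with $\varphi(p)=m$, $\varphi(q)=n$ equal, then $m=n$; moreover $\deg(\sigma)=\deg(\tau)=d$ is forced (either both equal $\ord(m)=\ord(n)$ when there is no $t$-factor, i.e. $p=m\in M$, or $\deg(\sigma)>\ord(m)$ gives $d>\ord(m)$ so $\sigma$ is determined as the $\prec$-minimal weight of degree $d$ by the normality condition), hence $\sigma=\tau$ and $p=q$. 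The same argument handles the mixed case $p=m\in M$ versus $q=t(\tau)n$: here $\ord(m)=d$ but normality of $q$ forces $\ord(n)<d$, so $\varphi(p)=m\neq n=\varphi(q)$. Therefore the $\varphi(p_\nu)$ are pairwise distinct, no cancellation occurs, and $\lm(\varphi(f))=\varphi(p_{\nu_0})=\varphi(\lm(f))$.

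For the second assertion, fix $f\in P$, $f\neq 0$, and recall $f^*$ is by definition the normal form modulo $N$ of $t(\sigma)f$ where $\deg(\sigma)=\topord(f)=d'$. Since the generators (i),(ii) of $N$ in Proposition \ref{hidbas} form a \Gr\ basis of $N$ (as noted in the paragraph preceding Definition \ref{Nnform}), the normal form $f^*$ is $\ord$-homogeneous of order $d'$ — indeed $t(\sigma)$ and $f$'s top component both have order $d'$, and reduction by the generators of $N$ does not change the order of a monomial (they relate monomials of equal order, or rewrite $t(\tau)$-factors of smaller degree away) — and $\varphi(f^*)=\varphi(t(\sigma)f)=f$ since $\varphi$ kills $N$ and sends $t(\sigma)\mapsto 1$. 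Now apply the first assertion to $f^*$: $\lm(\varphi(f^*))=\varphi(\lm(f^*))$, i.e. $\lm(f)=\varphi(\lm(f^*))$. It remains to check that $\lm(f^*)$ itself has no $t$-factor, equivalently $\lm(f^*)=\varphi(\lm(f^*))$. Write $\lm(f^*)=t(\sigma)m$ or $\lm(f^*)=m$ with $m\in M$; if there is a $t$-factor, normality forces $\ord(m)<d'$, so $m$ is a monomial of $f$ of order strictly less than $\topord(f)$, while the top component $f_{d'}$ of $f$ contributes (via $t(\sigma)f_{d'}$) monomials $t(\sigma)m'$ of order $d'$ with $m'\in M$ a monomial of $f_{d'}$; by the defining property of a $\ord$-homogenization $\Sigma$-ordering (Definition \ref{homorder}), $t(\sigma)m\prec m'$ whenever $\deg(\sigma)=\ord(m')>\ord(m)$, so such a $t(\sigma)m$ cannot be the leading monomial. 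Hence $\lm(f^*)\in M$, giving $\lm(f^*)=\varphi(\lm(f^*))=\lm(f)$.

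The step I expect to be the main obstacle is the injectivity of $\varphi$ on normal monomials of a fixed order, because one must carefully use the normalization clause in Definition \ref{Nnform} (the requirement $t(\sigma)=\min_\prec\{t(\tau)\mid\deg(\tau)=d\}$) to pin down $\sigma$ from the datum $(m,d)$ alone; without it, distinct normal monomials could map to the same element of $M$ and cancellation could destroy the leading-monomial identity. Everything else — $\ord$-homogeneity and $t$-freeness of $\lm(f^*)$ — follows routinely from Proposition \ref{phicompat}, the fact that (i),(ii) is a \Gr\ basis of $N$, and Definition \ref{homorder}, and I would present those as short verifications rather than dwelling on them.
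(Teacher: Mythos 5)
Your proof is correct and follows essentially the same route as the paper's (which disposes of the first claim by citing Proposition \ref{phicompat} and of the second by noting that $t(\sigma)$ cannot occur in $\lm(f^*)$); you have simply filled in the details. Note that your separate injectivity argument for $\varphi$ on normal monomials of a fixed order is already implied by the \emph{strict} inequality $\varphi(p)\prec\varphi(q)$ in Proposition \ref{phicompat}, so the cancellation worry is handled automatically, but your explicit verification is harmless and correct.
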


\begin{proof}
The first part of the statement follows immediately from Proposition
\ref{phicompat}. Moreover, if $\sigma\in\Sigma,\deg(\sigma) = \topord(f)$
then $t(\sigma)$ cannot appear in the leading monomial of $f^*$ and hence
$\lm(f^*) = \lm(f)$.
\end{proof}

\begin{definition}
Let $N\subset J\subset \bP$ be a $\Sigma$-ideal. Moreover, let $G\subset J$
be a subset of polynomials in normal form modulo $N$. We say that $G$ is a
{\em \Gr\ $\Sigma$-basis of $J$ modulo $N$} if $G\cup N$ is a \Gr\
$\Sigma$-basis of $J$.
\end{definition}

\begin{proposition}
\label{dehomgb}
Let $N\subset J\subset \bP$ be a $\ord$-graded $\Sigma$-ideal. If $G$ is
a $\ord$-homogeneous \Gr\ $\Sigma$-basis of $J$ modulo $N$ then $\varphi(G)$
is a \Gr\ $\Sigma$-basis of $\varphi(J)$.
\end{proposition}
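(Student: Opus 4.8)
The plan is to verify the two defining conditions of a \Gr\ $\Sigma$-basis for $\varphi(G)$: that $\varphi(G)$ $\Sigma$-generates $\varphi(J)$, and that $\lm(\varphi(G))$ $\Sigma$-generates $\LM(\varphi(J))$. The first condition is easy: since $G\cup N$ is a $\Sigma$-basis of $J$ and $N$ is a $\Sigma$-ideal, one has $J=\langle\Sigma\cdot G\rangle+N$ as an ideal of $\bP$; applying the surjective $\Sigma$-algebra homomorphism $\varphi$ and using $\varphi(N)=0$ together with $\varphi(\sigma\cdot g)=\sigma\cdot\varphi(g)$ yields $\varphi(J)=\langle\Sigma\cdot\varphi(G)\rangle$ in $P$. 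I also record at the outset that $\varphi(g)\neq 0$ for every $g\in G$: otherwise $\langle\Sigma\cdot g\rangle$ would be an $\ord$-graded $\Sigma$-ideal inside $\ker\varphi$, hence contained in $N$, forcing $g\in N$ and then $g=0$ (no nonzero polynomial in normal form modulo $N$ lies in $N$). The inclusion $\langle\Sigma\cdot\lm(\varphi(G))\rangle\subseteq\LM(\varphi(J))$ is automatic, so the real content is the reverse inclusion.

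The key step is to attach to an arbitrary nonzero $p\in\varphi(J)$ a preimage that lies \emph{inside $J$} and behaves well under $\lm$. First I would choose $q\in J$ with $\varphi(q)=p$ (so $q\neq 0$); setting aside the trivial case $q\in K$, let $D=\topord(q)$ and fix $\sigma\in\Sigma$ with $\deg(\sigma)=D$. Since $J$ is $\ord$-graded, the $\ord$-homogeneous components of $q$ lie in $J$; as $\ord(t(\sigma)q_d)=\max(D,\ord(q_d))=D$ for each of them, the element $t(\sigma)q\in J$ is $\ord$-homogeneous of order $D$. Now replace it by its normal form $\bar q$ modulo $N$. This $\bar q$ still lies in $J$ because $N\subset J$, it is still $\ord$-homogeneous of order $D$ because the generators (i),(ii) of $N$ — which form a \Gr\ basis of $N$ — are themselves $\ord$-homogeneous and reduction by an $\ord$-homogeneous element preserves $\ord$-homogeneity and the order, and $\varphi(\bar q)=\varphi(t(\sigma)q)=p$. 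Thus Proposition \ref{phicompat2} applies to $\bar q$ and gives $\lm(p)=\lm(\varphi(\bar q))=\varphi(\lm(\bar q))$.

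Finally I would locate $\lm(\bar q)$. Since $\bar q\in J\setminus\{0\}$ and $G\cup N$ is a \Gr\ $\Sigma$-basis of $J$, the monomial $\lm(\bar q)$ is divisible by $\sigma_0\cdot\lm(g)$ for some $g\in G\cup N$ and $\sigma_0\in\Sigma$. The possibility $g\in N$ is excluded: then $\sigma_0\cdot g\in N$ and $\sigma_0\cdot\lm(g)=\lm(\sigma_0\cdot g)\in\LM(N)$, so $\lm(\bar q)\in\LM(N)$, contradicting that every monomial of $\bar q$, in particular $\lm(\bar q)$, is normal modulo $N$ (recall that, (i),(ii) being a \Gr\ basis of $N$, the monomials outside $\LM(N)$ are precisely the normal ones). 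Hence $g\in G$; writing $\lm(\bar q)=w\cdot(\sigma_0\cdot\lm(g))$ with $w\in\bM$ and applying $\varphi$, using again Proposition \ref{phicompat2} and $\Sigma$-compatibility of $\varphi$, I obtain
\[
\lm(p)=\varphi(w)\cdot\bigl(\sigma_0\cdot\varphi(\lm(g))\bigr)=\varphi(w)\cdot\bigl(\sigma_0\cdot\lm(\varphi(g))\bigr)\in\langle\Sigma\cdot\lm(\varphi(G))\rangle.
\]
As $p$ was arbitrary this proves $\LM(\varphi(J))=\langle\Sigma\cdot\lm(\varphi(G))\rangle$, completing the argument.

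The hard part is the middle paragraph: one has to land the chosen preimage in $J$ itself rather than only in the saturation $\varphi(J)^*$, which is exactly why the construction homogenizes a preimage $q\in J$ by multiplying by a suitable $t(\sigma)$ and then normalizes modulo the sub-ideal $N\subset J$, instead of simply taking $p^*$; verifying that both operations preserve membership in $J$ and $\ord$-homogeneity (and that this is consistent with $\varphi(\bar q)=p$) is the technical core, while everything else follows routinely from Proposition \ref{phicompat2} and the fact that the generators (i),(ii) constitute a \Gr\ basis of $N$.
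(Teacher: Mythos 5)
Your proof is correct and follows essentially the same route as the paper's: lift a nonzero $p\in\varphi(J)$ to an $\ord$-homogeneous element of $J$ in normal form modulo $N$, apply the \Gr\ $\Sigma$-basis property of $G\cup N$ there, and push the divisibility down through $\varphi$ via Proposition \ref{phicompat2}. The paper's own proof is just a terser version that takes the lifting step (your normal form $\bar q$ of $t(\sigma)q$) and the exclusion of $g\in N$ for granted; your write-up makes these implicit steps explicit but adds no new idea.
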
 

\begin{proof}
Since $G$ is a \Gr\ $\Sigma$-basis of $J$ modulo $N$ we have that 
for any $\ord$-homogeneous polynomial $f\in J,f\neq 0$ in normal form
modulo $N$ there is an element $g\in G$ and $\sigma\in\Sigma$ such that
$\sigma\cdot \lm(g) \mid \lm(f)$. Then, by applying the $\Sigma$-algebra
endomorphism $\varphi$ one obtains that $\sigma\cdot \lm(\varphi(g))\mid
\lm(\varphi(f))$ that is $\varphi(G)$ is a \Gr\ $\Sigma$-basis of $\varphi(J)$.
\end{proof}

\begin{proposition}
\label{homgb}
Let $I\subset P$ be a $\Sigma$-ideal and let $G$ be a \Gr\ $\Sigma$-basis of $I$. 
Then $G^* = \{g^*\mid g\in G\}$ is a $\ord$-homogeneous \Gr\ basis of $I^*$
modulo $N$. Moreover, one has that $\lm(G^*) = \lm(G)$.
\end{proposition}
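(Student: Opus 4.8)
The plan is to verify the two claims of Proposition \ref{homgb} separately, relying on the structural results already established for the $\ord$-homogenization $\Sigma$-ordering. For the statement about leading monomials, I would invoke Proposition \ref{phicompat2}: since each $g^*$ is by construction the normal form modulo $N$ of $t(\sigma)g$ with $\deg(\sigma) = \topord(g)$, that proposition gives $\lm(g^*) = \lm(g)$ directly, hence $\lm(G^*) = \lm(G)$. This also shows every $g^*$ is $\ord$-homogeneous (of order $\topord(g)$) and lies in $I^*$, so $G^* \subset I^*$ is a legitimate candidate.

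**Main argument.** To show $G^* \cup N$ is a \Gr\ $\Sigma$-basis of $I^*$, I would use the characterization of \Gr\ $\Sigma$-bases in terms of leading-monomial divisibility: it suffices to check that for every $\ord$-homogeneous $h \in I^*$, $h \neq 0$, in normal form modulo $N$, the leading monomial $\lm(h)$ is divisible by $\sigma\cdot\lm(g^*)$ for some $g \in G$ and $\sigma \in \Sigma$, or else $\lm(h)$ is divisible by a leading monomial of a generator of $N$. Since the generators (i),(ii) of $N$ from Proposition \ref{hidbas} form a \Gr\ basis and their leading monomials are exactly the non-normal monomials, the first case is the one to handle: assume $h$ is in normal form modulo $N$. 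Set $f = \varphi(h) \in I$. If $f = 0$ then $h \in N$ and we are done (though a normal-form element of $N$ is $0$). Otherwise $f \neq 0$, and by Proposition \ref{phicompat2} applied to $h$ we get $\lm(\varphi(h)) = \varphi(\lm(h))$, so $\lm(f) = \varphi(\lm(h))$. Because $G$ is a \Gr\ $\Sigma$-basis of $I$, there exist $g \in G$ and $\sigma \in \Sigma$ with $\sigma\cdot\lm(g)\mid\lm(f)$. Now I would lift this divisibility back through $\varphi$: writing $\lm(h) = t(\tau)^e \cdot \varphi(\lm(h))$ in the appropriate sense (where the $t$-part, if any, is forced by normality), and using $\lm(g^*) = \lm(g)$ together with $\sigma\cdot\lm(g^*) = \sigma\cdot\lm(g)$ being $t$-free, conclude $\sigma\cdot\lm(g^*)\mid\lm(h)$.

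**The delicate point.** The step I expect to be the main obstacle is precisely the lifting of divisibility from $\lm(f) = \varphi(\lm(h))$ back to $\lm(h)$ itself. The issue is that $\varphi$ collapses all $t(\tau)$ to $1$, so a priori $\lm(h)$ could carry a factor $t(\tau)$ that has no counterpart in $\lm(g^*)$ — but this is harmless precisely because $\sigma\cdot\lm(g^*)$ involves no variable from $t(\Sigma)$ (as noted in Proposition \ref{phicompat2}, $t(\sigma)$ cannot appear in $\lm(g^*)$), so divisibility of the $X$-part is all that is needed. One must also check the $\ord$-homogeneity bookkeeping: $\sigma\cdot\lm(g^*)$ has order $\deg(\sigma) + \topord(g)$, which must be $\leq \ord(h)$; this follows from $\sigma\cdot\lm(g)\mid\lm(f)$ via Proposition \ref{ordgood}(ii) and $\ord(f) \leq \ord(h)$. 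Assembling these observations — normality of $h$, the identity $\lm(f) = \varphi(\lm(h))$, the $t$-freeness of $\sigma\cdot\lm(g^*)$, and the order inequality — yields the required divisibility, completing the proof that $G^* \cup N$ is a \Gr\ $\Sigma$-basis of $I^*$, i.e. that $G^*$ is a \Gr\ basis of $I^*$ modulo $N$.
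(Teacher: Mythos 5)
Your proof is correct and takes essentially the same route as the paper: both reduce to showing that for an $\ord$-homogeneous $h\in I^*$ in normal form modulo $N$ the monomial $\lm(h)$ is divisible by some $\sigma\cdot\lm(g^*)$, both invoke Proposition \ref{phicompat2} to identify $\lm(g^*)=\lm(g)$ and to relate $\lm(\varphi(h))$ to $\lm(h)$, and both conclude via the Gr\"obner $\Sigma$-basis property of $G$ in $P$ together with the $t$-freeness of $\sigma\cdot\lm(g^*)$. The only cosmetic difference is that the paper writes $h$ as $f^*$ or $t(\sigma)f^*$ with $f=\varphi(h)$ before comparing leading monomials, whereas you lift the divisibility directly through $\varphi(\lm(h))$; your explicit treatment of the delicate lifting step is, if anything, slightly more careful than the paper's.
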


\begin{proof}
Let $f'\in I^*$ be a $\ord$-homogeneous element in normal form modulo $N$
and put $f = \varphi(f')\in I$. Then, either $f' = f^*$ or $f' = t(\sigma) f^*$
with $\ord(f') = \deg(\sigma) > \ord(f^*) = \topord(f)$. Since $G$ is a
\Gr\ $\Sigma$-basis of $I$ there is $g\in G,\tau\in\Sigma$ such that
$\tau\cdot\lm(g)\mid \lm(f)$. By Proposition \ref{phicompat2} one has that
$\lm(f) = \lm(f^*)$ and $\lm(g) = \lm(g^*)$. Therefore, $\tau\cdot\lm(g^*)$
divides $\lm(f^*)$ and this monomial clearly divides $\lm(f')$.
\end{proof}

By the above propositions we obtain immediately what follows.

\begin{corollary}
\label{satgb}
Let $N\subset J\subset\bP$ be a $\ord$-graded $\Sigma$-ideal and denote
$J' = \varphi(J)^*$ its saturation.  Moreover, let $G$ be a $\ord$-homogeneous
\Gr\ $\Sigma$-basis of $J$ modulo $N$. Then $G' = \varphi(G)^* =
\{ \varphi(g)^* \mid g\in G \}$ is a $\ord$-homogeneous \Gr\ $\Sigma$-basis
of $J'$ modulo $N$. Moreover, we have $\lm(G') = \lm(\varphi(G))$.
\end{corollary}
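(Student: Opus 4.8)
The plan is to chain together the two homogenization/dehomogenization results, Proposition~\ref{dehomgb} and Proposition~\ref{homgb}, applied to the ideal $\varphi(J)\subset P$. First I would observe that by definition $J' = \varphi(J)^*$ is precisely the $\ord$-homogenization of the $\Sigma$-ideal $I := \varphi(J)\subset P$, and that $N\subset J'$ by the remark $N = 0^*\subset I^*$; so $J'$ is an admissible input for all the machinery developed above.

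The key steps, in order, are as follows. Step one: apply Proposition~\ref{dehomgb} to the $\ord$-graded $\Sigma$-ideal $N\subset J\subset\bP$ and the $\ord$-homogeneous \Gr\ $\Sigma$-basis $G$ of $J$ modulo $N$; this yields that $\varphi(G)$ is a \Gr\ $\Sigma$-basis of $\varphi(J) = I$. Step two: apply Proposition~\ref{homgb} to the $\Sigma$-ideal $I\subset P$ and the \Gr\ $\Sigma$-basis $\varphi(G)$ of $I$ just obtained; this gives that $(\varphi(G))^* = \{\varphi(g)^*\mid g\in G\}$ is a $\ord$-homogeneous \Gr\ $\Sigma$-basis of $I^* = \varphi(J)^* = J'$ modulo $N$, which is exactly the first assertion with $G' = \varphi(G)^*$. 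Step three: for the leading-monomial claim, Proposition~\ref{homgb} also states $\lm(I^*\text{-basis}) = \lm(\text{of the input basis})$, i.e.\ $\lm((\varphi(G))^*) = \lm(\varphi(G))$; since $G' = (\varphi(G))^*$ this reads $\lm(G') = \lm(\varphi(G))$, as required.

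The only point that needs a little care — and what I would flag as the main (minor) obstacle — is a bookkeeping matter about normal forms modulo $N$. In Proposition~\ref{dehomgb} the output $\varphi(G)$ consists of polynomials in $P$, hence trivially in normal form modulo $N$; but one should check that feeding it into Proposition~\ref{homgb} produces genuinely normal-form homogenizations. This is handled by the convention fixed just before Proposition~\ref{phicompat2}: for $f\in P$, $f^*$ \emph{denotes} the normal form of $t(\sigma)f$ modulo $N$ with $\deg(\sigma) = \topord(f)$, and Proposition~\ref{phicompat2} guarantees $\lm(f^*) = \lm(f)$. So the composite $\varphi(g)^*$ is automatically in normal form, and no extra reduction step intervenes between the two propositions. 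Everything else is a direct substitution of one lemma's conclusion into the next lemma's hypothesis, so the corollary indeed follows "immediately" as announced.
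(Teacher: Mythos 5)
Your proof is correct and follows exactly the route the paper intends: the paper gives no explicit argument, stating only that the corollary follows immediately from Propositions~\ref{dehomgb} and \ref{homgb}, which is precisely the chain (dehomogenize $J$ to get $\varphi(G)$ as a \Gr\ $\Sigma$-basis of $\varphi(J)$, then homogenize to get $G'=\varphi(G)^*$ as a basis of $J'$ modulo $N$ with $\lm(G')=\lm(\varphi(G))$) that you carry out. Your remark on normal forms modulo $N$ is a sensible bookkeeping check consistent with the convention fixed before Proposition~\ref{phicompat2}.
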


Let $I\subset P$ be any $\Sigma$-ideal. The previous results suggest
an alternative method to calculate a \Gr\ $\Sigma$-basis of $I$ which is
based only on $\ord$-homogeneous computations. Assume $H$ is any $\Sigma$-basis
of $I$ and denote as before $H^* = \{ f^*\mid f\in H \}$. Clearly $J =
\langle H^* \rangle_\Sigma + N$ is a $\ord$-homogeneous $\Sigma$-ideal
of $\bP$ containing $N$ such that $\varphi(J) = I$. Assume now
we compute $G$ a $\ord$-homogeneous \Gr\ $\Sigma$-basis of $J$ modulo $N$.
Then, $\varphi(G)$ is a \Gr\ $\Sigma$-basis of $I$.
Note that by using a $\ord$-based selection strategy for the S-polynomials,
the \Gr\ $\Sigma$-basis $G$ can be obtained order by order automatically
{\em minimal} that is $\sigma\cdot\lm(f)$ not divides $\lm(g)$ for all $f,g\in G,
f\neq g$ and $\sigma\in\Sigma$. This is clearly a computational advantage,
but since generally $\lm(G)\neq\lm(\varphi(G))$ one has that $\varphi(G)$
may be not minimal. In the worst case, the ideal $J$ may have an infinite
and hence uncomputable minimal \Gr\ $\Sigma$-bases but $I$ has a finite one.
This is clearly not the case when one considers a saturated ideal $J' = I^*$
since we have $\lm(G') = \lm(\varphi(G'))$ when $G'$ is a minimal
\Gr\ $\Sigma$-basis of $J'$. Note that this nice property depends on the fact
that we deal with a univariate homogenization.
A drawback is that if one computes the saturation $J'$ by means of
the ideal $J$ according to Corollary \ref{satgb}, one has again to compute
a \Gr\ $\Sigma$-basis of $J$. Then, a better approach consists in computing
``on the fly'' the \Gr\ $\Sigma$-basis of $J'$ starting from the generating
set $\{f^* \mid f\in H\}$. In other words, any time that a new generator $g$
of the $\ord$-homogeneous \Gr\ $\Sigma$-basis arises from the reduction
of an S-polynomial, we saturate $g$ that is we substitute this polynomial
with $\varphi(g)^*$. In formal terms, the algorithm one obtains is
the following one.

\suppressfloats[b]
\begin{algorithm}\caption{SigmaGBasis2}
\begin{algorithmic}[0]
\State \text{Input:} $H$, a $\Sigma$-basis of a $\Sigma$-ideal
$I\subset P$.
\State \text{Output:} $\varphi(G)$, a \Gr\ $\Sigma$-basis of $I$
such that $\lm(G) = \lm(\varphi(G))$.
\State $G:= H^*$;
\State $B:= \{(f,g) \mid f,g\in G\}$;
\While{$B\neq\emptyset$}
\State choose $(f,g)\in B$;
\State $B:= B\setminus \{(f,g)\}$;
\ForAll{$\sigma,\tau\in\Sigma$ such that $\gcd(\sigma,\tau) = 1$}
\State $h:= \Reduce(\spoly(\sigma\cdot f,\tau\cdot g), \Sigma\cdot G\cup N)$;
\If{$h\neq 0$}
\State $h = \varphi(h)^*$
\State $B:= B\cup\{(g,h),(h,h) \mid g\in G\}$;
\State $G:= G\cup\{h\}$;
\EndIf;
\EndFor;
\EndWhile;
\State \Return $\varphi(G)$.
\end{algorithmic}
\end{algorithm}

\newpage

\begin{proposition}
The algorithm \SigmaGBasis2\ is correct.
\end{proposition}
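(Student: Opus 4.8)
The plan is to regard \SigmaGBasis2 as the procedure \SigmaGBasis\ launched on the $\ord$-homogeneous input $H^{*}=\{f^{*}\mid f\in H\}$, with every reduction carried out modulo the fixed \Gr\ $\Sigma$-basis $\{$(i),(ii)$\}$ of $N$ (Proposition \ref{hidbas} and the remark following it) and with the extra instruction that each newly produced basis element $h$ be replaced by its saturation $\varphi(h)^{*}$; the stated output will then follow by dehomogenising through $\varphi$ by means of Proposition \ref{dehomgb}.

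Write $G$ for the current value of the loop variable. I would first prove, by induction on the loop, the following invariants: (a) every $g\in G$ is $\ord$-homogeneous, is in normal form modulo $N$, lies outside $N$, and equals $\varphi(g)^{*}$ in the sense of the convention preceding Proposition \ref{phicompat2}; these hold for the elements of $H^{*}$ and are preserved because each element adjoined later has the shape $\varphi(h)^{*}$, which is $\ord$-homogeneous by Proposition \ref{ordgood} and whose leading monomial $\lm(\varphi(h))\in M$ carries no variable $t(\cdot)$, hence lies outside $\LM(N)$ and thus outside $N$. (b) The $\Sigma$-ideal $\tilde J:=\langle\Sigma\cdot G\cup N\rangle$ is $\ord$-graded and contains $N$, and $\varphi(\tilde J)=I$: the inclusion $I\subseteq\varphi(\tilde J)$ is clear since $H^{*}\subseteq G$ and $\varphi(H^{*})=H$ $\Sigma$-generates $I$, while the reverse holds because $\varphi(N)=0$ and every adjoined $h=\varphi(h_{0})^{*}$ has $\varphi(h)=\varphi(h_{0})\in\varphi(\tilde J)=I$. (c) For every pair $(f,g)\in G\times G$ already removed from $B$ and all $\sigma,\tau\in\Sigma$ with $\gcd(\sigma,\tau)=1$, the S-polynomial $\spoly(\sigma\cdot f,\tau\cdot g)$ has a \Gr\ representation with respect to $\Sigma\cdot G\cup N$ (such representations persist as $G$ grows). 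Since every pair involving a newly adjoined element is put into $B$, on termination $B=\emptyset$ and (c) holds for all pairs of the final $G$.

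The crux, and the step I expect to be the main obstacle, is the verification that (c) is restored after a pair yields a nonzero reduction $h_{0}:=\Reduce(\spoly(\sigma\cdot f,\tau\cdot g),\Sigma\cdot G\cup N)$ and $h:=\varphi(h_{0})^{*}$ is adjoined. Since $\spoly(\sigma\cdot f,\tau\cdot g)$ differs from $h_{0}$ by an element with a \Gr\ representation over $\Sigma\cdot G\cup N$, it suffices to show that $h_{0}$ has a \Gr\ representation over $\Sigma\cdot\{h\}\cup N$. Here $h_{0}$ is $\ord$-homogeneous (the S-polynomial and each one-step reduction of $\ord$-homogeneous polynomials stay $\ord$-homogeneous, by Proposition \ref{ordgood}), and reducing $h_{0}$ to its normal form $\hat h$ modulo $N$ only contributes a \Gr\ representation over $N$; by Definition \ref{Nnform} the element $\hat h$ has the shape $g_{2}+t(\rho)g_{1}$ with $g_{1},g_{2}\in P$, $\deg\rho=\ord(h_{0})>\ord(g_{1})$ and $g_{2}$ homogeneous of order $\ord(h_{0})$, and a direct computation with the rewriting rules of Proposition \ref{hidbas} identifies $h=\varphi(h_{0})^{*}$ with $\hat h$ itself when $g_{2}\neq0$, and with the normal form of $t(\rho')g_{1}$, $\deg\rho'=\topord(g_{1})$, when $g_{2}=0$; in both cases $\lm(h)$ divides $\lm(\hat h)$ by Proposition \ref{phicompat2}, so one reduction step by $h$ followed by reductions modulo $N$ --- absorbing the superfluous factors $t(\cdot)$ through the identities $t(\sigma)t(\tau)-t(\sigma)$ and $x(\sigma)t(\tau)-x(\sigma)$ of Proposition \ref{hidbas} --- rewrites $\hat h$ to $0$. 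One must in addition dispose of the ``mixed'' S-polynomials occurring in Proposition \ref{sigmacrit} between an element of $G$ and a generator of $N$: those involving a generator of type (i) or of type $t(\sigma)t(\tau)-t(\sigma)$ are killed by the product criterion, since the leading monomials of these generators involve only variables $t(\cdot)$ whereas those of the elements of $G$ do not, and the remaining ones reduce by the corresponding element of $\Sigma\cdot G$ together with the generators of $N$. Checking that leading monomials never increase along these chains is the delicate bookkeeping.

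Once $B=\emptyset$, invariant (c) together with Proposition \ref{sigmacrit} gives that $G\cup N$ is a \Gr\ $\Sigma$-basis of $\tilde J$, that is, $G$ is a $\ord$-homogeneous \Gr\ $\Sigma$-basis of $\tilde J$ modulo $N$; applying Proposition \ref{dehomgb} to the $\ord$-graded $\Sigma$-ideal $N\subset\tilde J$ shows that $\varphi(G)$ is a \Gr\ $\Sigma$-basis of $\varphi(\tilde J)=I$. Finally, for each $g\in G$ invariant (a) gives $g=\varphi(g)^{*}$ with $\varphi(g)\neq0$, hence $\lm(g)=\lm(\varphi(g))$ by Proposition \ref{phicompat2}; since $g\mapsto\varphi(g)$ is thus a bijection of $G$ onto $\varphi(G)$ preserving leading monomials, $\lm(G)=\lm(\varphi(G))$, which completes the verification of the output specification.
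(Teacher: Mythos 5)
Your proof is correct and takes essentially the same route as the paper's: both arguments rest on the observations that the current basis always generates an ideal $J$ with $\varphi(J)=I$, that replacing a reduced S-polynomial $h$ by its saturation $\varphi(h)^*$ loses no reducing power because $\lm(\varphi(h)^*)=\lm(\varphi(h))$ divides $\lm(h)$, and that one concludes by dehomogenizing a \Gr\ $\Sigma$-basis modulo $N$. The paper's version is far terser --- it states the divisibility claim in one line and invokes Corollary \ref{satgb} to get both $J=J'$ and $\lm(G)=\lm(\varphi(G))$, whereas you derive the latter from your invariant $g=\varphi(g)^*$ via Proposition \ref{phicompat2} --- and your explicit case analysis of the normal form $g_2+t(\rho)g_1$ together with the treatment of the mixed S-polynomials against the generators of $N$ supplies bookkeeping that the paper leaves implicit.
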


\begin{proof}
Note that at each step we are inside an ideal $J$ such that $\varphi(J) = I$
that is whose saturation is $J' = I^*$. Moreover, for any $\ord$-homogeneous
element $h\in\bP$ which is in normal form modulo $N$ one has that $h' =
\varphi(h)^*$ divides $h$. This implies that if an S-polynomial is reduced
to zero by adding $h$ to the basis $G$, the same holds if we substitute $h$
with $h'$. In case of termination, owing to  the set $G$ is a $\ord$-homogeneous
\Gr\ $\Sigma$-basis of $J$ modulo $N$ whose elements are all saturated,
by Corollary \ref{satgb} we may conclude that $J = J'$ and hence $\varphi(G)$
is a \Gr\ $\Sigma$-basis of $I$ such that $\lm(G) = \lm(\varphi(G))$.
\end{proof}

About termination or just termination up to some order $d$, this is not
provided in general for the above algorithm. The reason is that even
if all computations are $\ord$-homogeneous, because of the saturation
$h = \varphi(h)^*$ that may decrease the order we can't be sure at some
suitable step that we will not get additional elements of order $\leq d$
in the steps that follow.


\section{An illustrative example (continued)}

We apply now the algorithm $\SigmaGBasis2$ to the same $\Sigma$-ideal that
has been considered in Section 5 for illustrating $\SigmaGBasis$.
Recall that such ideal is $I = \langle g_1,g_2 \rangle\subset P = K[X(\Sigma)]$
where $X = \{x,y\}, \Sigma = \N^2$ and
\[
\begin{array}{l}
g_1 = y(1,1)y(1,0) - 2x(0,1)^2 \\
g_2 = y(2,0) + x(0,0)x(1,0).
\end{array}
\]
Let now $\bX = \{x,y,t\}$ and define the polynomial algebra $\bP = K[\bX(\Sigma)]$
with variables $x(i,j),y(i,j),t(i,j)$, for all $i,j\geq 0$. We consider
the $\Sigma$-algebra endomorphism $\varphi:\bP \to \bP$ such that
$x(i,j)\mapsto x(i,j), y(i,j)\mapsto y(i,j)$ and $t(i,j)\mapsto 1$.
In Proposition \ref{hidbas} we proved that the largest $\ord$-graded
$\Sigma$-ideal contained in $\ker\varphi$ is the ideal $N\subset\bP$
generated by the polynomials
\[
\begin{array}{l}
t(i,j) - t(k,l)\ \mbox{where}\ (i,j)\neq (k,l), i + j = k + l; \\
t(i,j) t(k,l) - t(i,j), x(i,j) t(k,l) - x(i,j)\
\mbox{where}\ i + j\geq k + l.
\end{array}
\]
Moreover, we define a $\ord$-homogenization $\Sigma$-ordering of $\bP$
(Definition \ref{homorder}) as the weight $\Sigma$-ordering given by
the {\em degrevlex} ordering of $\Sigma$ ($\sigma_1 > \sigma_2$) and
the {\em lex} monomial ordering of $K[x(0,0),y(0,0),t(0,0)]$ ($x(0,0)\succ
y(0,0)\succ t(0,0)$). Note that such $\Sigma$-ordering extends the one
defined for $P\subset\bP$ in Section 5. In practice, it is the lexicographic
monomial ordering of $\bP$ such that
\[
\begin{array}{l}
\ldots \succ x(2, 0)\succ y(2, 0)\succ t(2, 0)\succ x(1, 1)\succ y(1, 1)
\succ t(1, 1)\succ x(0, 2)\succ \\
y(0, 2)\succ t(0, 2)\succ x(1, 0)\succ y(1, 0)\succ t(1,0)\succ x(0, 1)
\succ y(0, 1)\succ t(0, 1)\succ \\
x(0, 0)\succ y(0, 0)\succ t(0, 0).
\end{array}
\]
Recall that a \Gr\ $\Sigma$-basis of $I$ with respect to such ordering
is given by the elements $g_1,g_2$ together with
\[
\begin{array}{l}
g_3 = y(1,2)x(0,1)^2 - y(1,0)x(0,2)^2, \\
g_4 = 2x(1,1)^2 - x(0,0)x(1,0)x(0,1)x(1,1).
\end{array}
\]
We introduce then $\ord$-homogenizations of the polynomials $g_1,g_2$
that are
\[
\begin{array}{l}
g_1^* = y(1,1)y(1,0) - 2t(0,2)x(0,1)^2, \\
g_2^* = y(2,0) + t(0,2)x(0,0)x(1,0).
\end{array}
\]
We start applying $\SigmaGBasis2$ to $G = \{g_1^*,g_2^*\}$ by considering
the S-polynomials
\[ 
\begin{array}{l}
s_1 = \spoly((0,1)\cdot g_1^*, g_1^*), \\
s_2 = \spoly((1,0)\cdot g_1^*, g_2^*), \\
s_3 = \spoly((1,0)\cdot g_1^*, (0,1)\cdot g_2^*).
\end{array}
\]
By reducing $s_1$ with respect to $\Sigma\cdot G\cup N$ one obtains exactly
the polynomial
\[
g_3^* = y(1,2)x(0,1)^2 - t(0,3)y(1,0)x(0,2)^2.
\]
Clearly $g_3^*$ is already a saturated element. Then $G = \{g_1^*,g_2^*,g_3^*\}$
and we form the new S-polynomials
\[
\begin{array}{l}
s_4 = \spoly((0,1)\cdot g_1^*, g_3^*), \\
s_5 = \spoly((0,2)\cdot g_1^*, g_3^*), \\
s_6 = \spoly((0,2)\cdot g_2^*, (1,0)\cdot g_3^*).
\end{array}
\]
With respect to $\Sigma\cdot G\cup N$, one has the reductions
$s_4\to 0$ and $s_2\to h$ where
\[
h = 2t(0,3)x(1,1)^2 - t(0,3)x(0,0)x(1,0)x(0,1)x(1,1) = t(0,3)g_4^*.
\]
By saturating this element as $\varphi(h)^* = g_4^*$, we update
$G = \{g_1^*,g_2^*,g_3^*,g_4^*\}$ and another S-polynomial is defined as
\[
s_7 = \spoly((1,0)\cdot g_3^*, g_4^*).
\]
All remaining S-polynomials $s_3,s_5,s_6,s_7$ reduce to zero
with respect to $\Sigma\cdot G\cup N$ and we conclude that $\varphi(G) =
\{g_1,g_2,g_3,g_4\}$ is a \Gr\ $\Sigma$-basis of $I\subset P$.


\section{Testing and timings}

In this section we present a set of tests for the algorithms \SigmaGBasis\
and \SigmaGBasis2\ which is based on an experimental implementation of them
in the language of Maple. This is actually the first implementation
of algorithms for computing \Gr\ bases of systems of linear or
non-linear partial difference equations. Note that for the linear case
one has the packages \textsf{LDA} (Linear Difference Algebra) \cite{GR}
and \textsf{Ore\_algebra[shift\_algebra]} in the Maple distribution.
The main idea that lead us when coding the proposed algorithms is that
they can be considered variants of the classical Buchberger's algorithm
where some amount of computations can be avoided by means of the symmetry
defined by the monoid $\Sigma$. In fact, as explained in the previous
sections, a ``basic'' approach to calculate a \Gr\ $\Sigma$-basis of
a $\Sigma$-ideal $I$ generated by a $\Sigma$-basis $H$ consists in
applying the Buchberger's algorithm to the basis $\Sigma\cdot H$. One obtains
therefore a \Gr\ basis $G'$ of $I$ from which a \Gr\ $\Sigma$-basis
$G\subset G'$ can be extracted such that $\Sigma\cdot \lm(G) = \lm(G')$.
Clearly, chain and coprime criterions can be used in the usual way in the
procedure. Then, the algorithm \SigmaGBasis\ can be understood as the variant
that prescribes the application also of the $\Sigma$-criterion (Proposition
\ref{sigmacrit}) to the S-polynomials $\spoly(\sigma\cdot f,\tau\cdot g)$
and to add the set of all shifts $\Sigma\cdot h$ to the current basis
when a new element $h$ arises from the reduction of an S-polynomial.
Then, the \Gr\ $\Sigma$-basis of $I$ is simply the union of the initial
basis $H$ with the new elements $h$. Recall that the procedure is correct
only if one uses a monomial $\Sigma$-ordering. Clearly, from the set $\Sigma$
is infinite it follows that actual computations can be only performed
with a finite subset of $\Sigma$ that is over a finite set of variables
of $P = K[X(\Sigma)]$. Typically, one fixes a bound $d$ for the degree
of the elements of $\Sigma$ that is for the order of the variables
$x_i(\sigma)$. Owing to the finite $\Sigma$-criterion (Proposition
\ref{finsigmacrit}), a basis obtained with a monomial ordering compatible
with the order function is certified to be a complete \Gr\ $\Sigma$-basis
if the order bound is at least the double of the maximum top order of
its elements.

In addition to the basic procedure for the computation of \Gr\ $\Sigma$-bases
and the algorithm \SigmaGBasis, for the experiments we consider also
a variant of the latter method where the $\Sigma$-criterion is suppressed
but one continues to shift the reduced form of the S-polynomials.
This procedure is tested to the aim of understanding the contribution of
any of the implemented strategies. Finally, we propose an implementation
of the algorithm \SigmaGBasis2\ based on the saturation of a $\Sigma$-ideal
with respect to the grading defined by the order function. In practice,
once one has homogenized the initial generators, the saturation $\varphi(h)^*$ 
is performed before the application of shifting, for each new element $h$
obtained by the reduction of an S-polynomial. In output one returns the
dehomogenization of the computed basis. Note that this procedure is correct
only if one uses a $\Sigma$-ordering which is compatible with the order
function and if the polynomials are kept in normal form modulo $N$ during
the computations.

The monomial $\Sigma$-orderings of $P$ that we consider for the tests
are defined in the following way. One has initially to fix a monomial ordering
for $\Sigma$ and we choose {\em degrevlex} in order to provide compatibility
with the degree. Then, one fixes a monomial ordering, for us {\em lex},
over the subring $P(1) = K[X(1)]$ or $P(x_0) = K[x_0(\Sigma)]$ that is extended 
as a block ordering to the polynomial ring $P = K[X(\Sigma)]$ according to 
the choice of a variables ranking based on weight or index respectively.
For a detailed description of these orderings in the considered examples
see the Appendix. In the table of tests, we distinguish weight or index
ranking by the letters ``w'' and ``i''. The integer that comes before these
letters refers to the fixed order bound. Note that the algorithm \SigmaGBasis2\
is compatible only with rankings of type weight.

For the basic variant of the \Gr\ $\Sigma$-bases algorithm, one can clearly
use any implementation of the Buchberger's algorithm as, for instance,
the one contained in the package \textsf{Groebner} of Maple.
We have preferred instead to develop ourselves all different variants in order
to have the same implementation and hence the same efficiency, for the
fundamental subroutines of the algorithms. In this way, for the basic version
we have been also able to access to important parameters of the computation
as the total number of S-polynomial reductions. This number is for us the sum
of the actual S-polynomials with the initial generators that are interreduced.
Note that our implementation of the Buchberger's algorithm is in fact generally
comparable with the built-in ones of Maple. For instance, the test
{\em falkow-6w-basic} takes 9 hours, but using
\textsf{Groebner[Basis]} it takes 11 hours with \textsf{method=buchberger}
and 8.5 hours with \textsf{method=maplef4}. Other parameters
that are considered for the experiments are the number of input and output
generators. Note that for the basic algorithm we count generators and not
$\Sigma$-generators. Finally, the parameter ``minout'' refers to the number
of elements of a minimal \Gr\ $\Sigma$-basis. All examples have been computed
with Maple 12 running on a server with a four core Intel Xeon at 3.16GHz
and 64 GB RAM. The timings are given in hour-minute-second format.

\begin{center}
\begin{tabular}[t]{|l|c|c|c|c|c|}
\hline
Example           & in  & out & minout & pairs & time     \\
\hline
falkow-6w-sigma   & 4   & 5   & 5      & 5     & 18s      \\
falkow-6w-nocrit  & 4   & 5   & 5      & 8     & 57m1s    \\
falkow-6w-sigma2  & 4   & 5   & 5      & 5     & 61s      \\
falkow-6w-basic   & 157 & 157 & 5      & 157   & 9h8m11s \\
\hline
falkow-6i-sigma   & 4   & 10  & 9      & 25    & 1m45s    \\
falkow-6i-nocrit  & 4   & 10  & 9      & 34    & 1m53s    \\
falkow-6i-basic   & 157 & 163 & 9      & 172   & 1m44s    \\
\hline
navier-8w-sigma   & 4   & 6   & 5      & 9     & 26s      \\
navier-8w-nocrit  & 4   & 6   & 5      & 22    & 3h46m29s \\
navier-8w-sigma2  & 4   & 6   & 5      & 9     & 6m4s     \\
navier-8w-basic   & 86  & -   & -      & -     & $>$ 3 days \\
\hline
navier-8i-sigma   & 4   & 9   & 4      & 15    & 12s      \\
navier-8i-nocrit  & 4   & 9   & 4      & 37    & 16s      \\
navier-8i-basic   & 86  & 86  & 4      & 86    & 10s      \\
\hline
heat-12w-sigma    & 5   & 5   & 5      & 7     & 1m10s    \\
heat-12w-nocrit   & 5   & 5   & 5      & 137   & 1m46s    \\
heat-12w-sigma2   & 5   & 5   & 5      & 7     & 2m15s    \\
heat-12w-basic    & 378 & 246 & 5      & 378   & 1m33s    \\
\hline
eq26-12w-sigma    & 1   & 43  & 28     & 557   & 2m4s     \\
eq26-12w-nocrit   & 1   & 43  & 28     & 790   & 1m50s    \\
eq26-12w-sigma2   & 1   & 43  & 28     & 557   & 24m33s   \\
eq26-12w-basic    & 10  & 208 & 28     & 1673  & 6m40s    \\
\hline
eq27-12w-sigma    & 1   & 28  & 18     & 609   & 14s      \\
eq27-12w-nocrit   & 1   & 28  & 18     & 923   & 22s      \\
eq27-12w-sigma2   & 1   & 28  & 18     & 609   & 25s      \\
eq27-12w-basic    & 9   & 121 & 18     & 726   & 11s      \\
\hline
\end{tabular}
\end{center}

\bigskip
We give now some informations about the examples we have used for the experiments.
See the Appendix of the paper for an explicit description of the test set.
All considered examples are non-linear systems of ordinary or partial difference
equations with constant coefficients which are of interest in literature.
For instance, the tests {\em falkow} are obtained by the discretization of
the Falkowich-Karman equation which is a non-linear two-dimensional differential
equation describing transonic flow in gas dynamics. The discretization
we used are equations (41) in \cite{GBM}. Then, the {\em navier} examples
are based on equations $e_1,e_2,e_3,e_4$ of the system (13) in the paper
\cite{GB} that are a finite difference scheme corresponding to the discretization
(9) of the Navier-Stokes equations for two-dimensional viscous incompressible
fluid flows. The tests {\em heat} are the discretization of the
one-dimensional heat equation as described in the equations (10) and (11)
of \cite{Lv}. Finally, {\em eq26} and {\em eq27} are the equations (2.6)
and (2.7) at page 24 of \cite{GL} which are examples of ordinary difference
equations that have periodic solutions.

By analyzing the experiments, it is sufficiently clear that the strategy
implemented in \SigmaGBasis\ is the safest one and hence on the average,
the most efficient one. In fact, by decreasing the number of S-polynomials
this strategy avoids the dramatical effects of involved reductions as for
the tests {\em falkow-6w} and {\em navier-8w}. For simpler examples the
four strategies appear essentially equivalent. The algorithms \SigmaGBasis\
and \SigmaGBasis2\ lead to practically identical computations but the latter
method suffers of some overhead which is probably due to our still experimental
implementation. For instance, even if the normal form modulo the ideal $N$
is described in the Definition \ref{Nnform}, in our implementation
we obtain it by computations that is by adding a \Gr\ basis of $N$ to
the input basis for \SigmaGBasis2.

The proposed algorithms usually provide only partial informations about
the structure of \Gr\ $\Sigma$-bases since they are in general infinite.
Nevertheless, it is interesting to note that by means of the finite
$\Sigma$-criterion we have been able to certify that the examples
{\em falkow, navier} and {\em heat} have finite bases with respect to the
weight ranking. In particular, the elements of the \Gr\ basis of {\em falkow}
have maximum top order equal to 4 and hence they are certified in order 8
in about 4 minutes. The example {\em navier} has max top order equal
to 6 and its certification is obtained in order 12 in less than one hour.
Finally, the example {\em heat} has max top order 2 and it gets
certification in order 4 in 0 seconds.


\section{Conclusions and future directions}

This paper shows that one can not only generalize in a systematic way
the \Gr\ bases theory and related algorithms to the algebras of partial
difference polynomials but also make these methods really work by
introducing suitable gradings for such algebras. In fact, weight and order
functions provide a Noetherian subalgebras filtration that implies
termination and completeness certification for actual computations
that are performed within some bounded degree that is over a finite
number of variables. We have then developed the first implementation
of a variant of the Buchberger's algorithm for systems of linear or
non-linear partial difference equations. Even if such implementation
is just experimental, the approach corresponding to the algorithm
\SigmaGBasis\ is strong enough to let it able to work with
discretizations of real world systems of non-linear differential
equations.

In this paper we consider difference equations with constant
coefficients and hence a next step along this line of research
is to extend the proposed methods to systems of difference equations
with non-constant coefficients that is to assume that $\Sigma$ acts
on the base field $K$ in a non-trivial way. Moreover, since the algebras
of partial difference polynomials are free objects in the category
of commutative algebras endowed with the action of a monoid $\Sigma$
isomorphic to $\N^r$, a natural future direction consists in extending
the ideas introduced here to other types of monoid symmetry over
commutative algebras as the ones used, for instance, in algebraic statistic
\cite{BD,HM}. Starting from \Gr\ bases, classical directions are the computation
of the Hilbert series and free resolutions that one may generalize
to partial difference ideals or other types of invariant ideals.
Finally, we aim to have the proposed algorithms implemented in the kernel
of computer algebra systems in order to tackle involved problems related
with the discretization of systems of partial differential equations
\cite{Ge,GBM,GR}.


\section*{Acknowledgments}

The author would like to thank Vladimir Gerdt for introducing him
to the theory of difference algebras and supporting the preparation
of testing examples. He is also grateful to the research group of
\textsf{Singular} \cite{DGPS} for the courtesy of allowing access
to their servers for performing computational experiments.
Thanks also to the reviewer for all valuable remarks that have helped
to make the paper more readable.

\section*{Appendix: the test set}

\noindent
{\bf Falkovich-Karman} ({\em falkow})

\medskip \noindent
The base field is $F = \Q(h,\tau,K,\gamma)$ (field of rational functions with
rational coefficients) where $h$ is the space mesh step, $\tau$ is the time
mesh step and $K,\gamma$ are parameters of the corresponding differential
equation. The algebra of partial difference polynomials is $P =
F[\varphi_x(i,j,k),\varphi_y(i,j,k),\varphi_t(i,j,k),\varphi(i,j,k)\mid i,j,k\geq 0]$.
The monomial ordering of $P$ is the lexicographic one based on the following
weight ranking
\[
\begin{array}{l}
\ldots \succ
\varphi_x(2,0,0)\succ \varphi_y(2,0,0)\succ \varphi_t(2,0,0)\succ \varphi(2,0,0)\succ
\varphi_x(1,1,0)\succ \\
\varphi_y(1,1,0)\succ \varphi_t(1,1,0)\succ \varphi(1,1,0)\succ \varphi_x(0,2,0)\succ
\varphi_y(0,2,0)\succ \varphi_t(0,2,0)\succ \\
\varphi(0,2,0)\succ \varphi_x(1,0,1)\succ \varphi_y(1,0,1)\succ \varphi_t(1,0,1)\succ
\varphi(1,0,1)\succ \varphi_x(0,1,1)\succ \\
\varphi_y(0,1,1)\succ \varphi_t(0,1,1)\succ \varphi(0,1,1)\succ \varphi_x(0,0,2)\succ
\varphi_y(0,0,2)\succ \varphi_t(0,0,2)\succ \\
\varphi(0,0,2)\succ \varphi_x(1,0,0)\succ \varphi_y(1,0,0)\succ \varphi_t(1,0,0)\succ
\varphi(1,0,0)\succ \varphi_x(0,1,0)\succ \\
\varphi_y(0,1,0)\succ \varphi_t(0,1,0)\succ \varphi(0,1,0)\succ \varphi_x(0,0,1)\succ
\varphi_y(0,0,1)\succ \varphi_t(0,0,1)\succ \\
\varphi(0,0,1)\succ \varphi_x(0,0,0)\succ \varphi_y(0,0,0)\succ \varphi_t(0,0,0)\succ
\varphi(0,0,0),
\end{array}
\]
or on the following index ranking
\[
\begin{array}{l}
\ldots \succ 
\varphi_x(2,0,0)\succ \varphi_x(1,1,0)\succ \varphi_x(0,2,0)\succ
\varphi_x(1,0,1)\succ \varphi_x(0,1,1)\succ \\
\varphi_x(0,0,2)\succ \varphi_x(1,0,0)\succ \varphi_x(0,1,0)\succ
\varphi_x(0,0,1)\succ \varphi_x(0,0,0)\succ \\
\ldots \succ 
\varphi_y(2,0,0)\succ \varphi_y(1,1,0)\succ \varphi_y(0,2,0)\succ
\varphi_y(1,0,1)\succ \varphi_y(0,1,1)\succ \\
\varphi_y(0,0,2)\succ \varphi_y(1,0,0)\succ \varphi_y(0,1,0)\succ
\varphi_y(0,0,1)\succ \varphi_y(0,0,0)\succ \\
\ldots \succ 
\varphi_t(2,0,0)\succ \varphi_t(1,1,0)\succ \varphi_t(0,2,0)\succ
\varphi_t(1,0,1)\succ \varphi_t(0,1,1)\succ \\
\varphi_t(0,0,2)\succ \varphi_t(1,0,0)\succ \varphi_t(0,1,0)\succ
\varphi_t(0,0,1)\succ \varphi_t(0,0,0)\succ \\
\ldots \succ 
\varphi(2,0,0)\succ \varphi(1,1,0)\succ \varphi(0,2,0)\succ
\varphi(1,0,1)\succ \varphi(0,1,1)\succ \\
\varphi(0,0,2)\succ \varphi(1,0,0)\succ \varphi(0,1,0)\succ
\varphi(0,0,1)\succ \varphi(0,0,0).
\end{array}
\]
The input partial difference polynomials for the computational experiments are
\[
\begin{array}{l}
E_1 = - 4 h p(2,1,1) - h \tau(\gamma + 1) p_x(2,1,0)^2 +
2 h \tau K p_x(2,1,0) + 4 h p(2,1,0) + \\
2 h \tau p_y(1,2,0) - 4 h^2 p_t(1,1,0) + 4 h p(0,1,1) -
2 h \tau p_y(1,0,0) + \\
h \tau (\gamma + 1) p_x(0,1,0)^2 - 2 h \tau K p_x(0,1,0) - 4 h p(0,1,0), \\
E_2 = \frac{h}{2} (p_x(1,0,0) + p_x(0,0,0)) - p(1,0,0) + p(0,0,0), \\
E_3 = \frac{h}{2} (p_y(0,1,0) + p_y(0,0,0)) - p(0,1,0) + p(0,0,0), \\
E_4 = 2 \tau p_t(0,0,1) - p(0,0,2) + p(0,0,0).
\end{array}
\]

\medskip \noindent
{\bf Navier-Stokes} ({\em navier})

\medskip \noindent
The base field is $K = \Q(h,\tau,Re)$ where $Re$ is the Reynolds number.
The algebra of partial difference polynomials is $P = K[p(i,j,k),u(i,j,k),v(i,j,k)\mid
i,j,k\geq 0]$. The lexicographic monomial ordering of $P$ is defined by
following weight ranking
\[
\begin{array}{l}
\ldots \succ p(2, 0, 0)\succ u(2, 0, 0)\succ v(2, 0, 0)\succ
p(1, 1, 0)\succ u(1, 1, 0)\succ v(1, 1, 0)\succ \\
p(0, 2, 0)\succ u(0, 2, 0)\succ v(0, 2, 0)\succ p(1, 0, 1)\succ
u(1, 0, 1)\succ v(1, 0, 1)\succ p(0, 1, 1)\succ \\
u(0, 1, 1)\succ v(0, 1, 1)\succ p(0, 0, 2)\succ u(0, 0, 2)\succ
v(0, 0, 2)\succ p(1, 0, 0)\succ u(1, 0, 0)\succ \\
v(1, 0, 0)\succ p(0, 1, 0)\succ u(0, 1, 0)\succ v(0, 1, 0)\succ
p(0, 0, 1)\succ u(0, 0, 1)\succ v(0, 0, 1)\succ \\
p(0, 0, 0)\succ u(0, 0, 0)\succ v(0, 0, 0),
\end{array}
\]
or by the following index ranking
\[
\begin{array}{l}
\ldots \succ p(2, 0, 0)\succ p(1, 1, 0)\succ p(0, 2, 0)\succ
p(1, 0, 1)\succ p(0, 1, 1)\succ p(0, 0, 2)\succ \\
p(1, 0, 0)\succ p(0, 1, 0)\succ p(0, 0, 1)\succ p(0, 0, 0)\succ
\ldots \succ u(2, 0, 0)\succ u(1, 1, 0)\succ \\
u(0, 2, 0)\succ u(1, 0, 1)\succ u(0, 1, 1)\succ u(0, 0, 2)\succ
u(1, 0, 0)\succ u(0, 1, 0)\succ \\
u(0, 0, 1)\succ u(0, 0, 0)\succ \ldots \succ v(2, 0, 0)\succ v(1, 1, 0)\succ
v(0, 2, 0)\succ v(1, 0, 1)\succ \\
v(0, 1, 1)\succ v(0, 0, 2)\succ v(1, 0, 0)\succ v(0, 1, 0)\succ
v(0, 0, 1)\succ v(0, 0, 0).
\end{array}
\]
The input partial difference polynomials are
\[
\begin{array}{l}
E_1 = 2 h (u(0,2,1) - u(0,0,1) + v(0,1,2) - v(0,1,0)), \\
E_2 = - 4 \frac{\tau}{Re} h^2 (u(0,4,2) + u(0,2,4)) + 16 h^4 u(1,2,2) +
8 \tau h^3 (u(0,3,2)^2 + \\
u(0,2,3) v(0,2,3) + p(0,3,2)) + 16 h^2 (- h^2 + \frac{\tau}{Re}) u(0,2,2) -
8 \tau h^3 (u(0,1,2)^2 + \\
u(0,2,1) v(0,2,1) + p(0,1,2)) - 4 \frac{\tau}{Re} h^2 (u(0,0,2) + u(0,2,0)), \\
E_3 = - 4 \frac{\tau}{Re} h^2 (v(0,4,2) + v(0,2,4)) + 16 h^4 v(1,2,2) +
8 \tau h^3 (v(0,2,3)^2 + \\
u(0,3,2) v(0,3,2) + p(0,2,3)) + 16 h^2 (- h^2 + \frac{\tau}{Re}) v(0,2,2) -
8 \tau h^3 (v(0,2,1)^2 + \\
u(0,1,2) v(0,1,2) + p(0,2,1)) - 4 \frac{\tau}{Re} h^2 (v(0,2,0) + v(0,0,2)), \\
E_4 = 4 h^2 (v(0,2,4)^2 + u(0,4,2)^2 + v(0,2,0)^2 + u(0,0,2)^2
+ p(0,4,2) + \\
p(0,2,4) + p(0,2,0) + p(0,0,2)) + 8 h^2 (u(0,3,3) v(0,3,3) - v(0,2,2)^2 - \\
u(0,2,2)^2 - u(0,3,1) v(0,3,1) - u(0,1,3) v(0,1,3) + u(0,1,1) v(0,1,1)) - \\
16 h^2 p(0,2,2).
\end{array}
\]

\medskip \noindent
{\bf Heat} ({\em heat})

\medskip \noindent
The base field is by definition $K = \Q(h,\tau)$ and the partial difference polynomial
algebra is $P = K[x(i,j),t(i,j),u(i,j)\mid i,j\geq 0]$. The lexicographic monomial
order of $P$ is obtained by the following weight ranking
\[
\begin{array}{l}
\ldots \succ x(2, 0)\succ t(2, 0)\succ u(2, 0)\succ x(1, 1)\succ
t(1, 1)\succ u(1, 1)\succ x(0, 2)\succ \\
t(0, 2)\succ u(0, 2)\succ x(1, 0)\succ t(1, 0)\succ u(1, 0)\succ
x(0, 1)\succ t(0, 1)\succ u(0, 1)\succ \\
x(0, 0)\succ t(0, 0)\succ u(0, 0),
\end{array}
\]
The input is given by
\[
\begin{array}{l}
E_1 = (u(1,0) - u(0,0)) (x(0,1) - x(0,0))^2 -
(u(0,2) - 2 u(0,1) + \\
u(0,0)) (t(1,0) - t(0,0)), E_2 = x(1,0) - x(0,0), E_3 = x(0,1) - x(0,0) - h, \\
E_4 = t(1,0) - t(0,0) - \tau,
E_5 = t(0,1) - t(0,0).
\end{array}
\]

\medskip \noindent
{\bf Equation 2.6} ({\em eq26})

\medskip \noindent
One considers the algebra of ordinary difference polynomials $P = \Q[x(i)\mid i\geq 0]$
endowed with the lexicographic monomial ordering such that
\[
\ldots \succ x(4)\succ x(3)\succ x(2)\succ x(1)\succ x(0).
\]
The input ordinary difference polynomial is
\[
\begin{array}{l}
E = x(3)x(0) - x(2) - x(1) - 1.
\end{array}
\]

\medskip \noindent
{\bf Equation 2.7} ({\em eq27})

\medskip \noindent
We consider the same algebra $P = \Q[x(i)\mid i\geq 0]$ endowed with
the same monomial ordering as in the previous example. The input polynomial is here
\[
\begin{array}{l}
E = x(4)x(2)x(0) - x(3)x(1).
\end{array}
\]


\end{document}